\DeclareMathOperator{\Dom}{\mathcal{D}}
\DeclareMathOperator{\Lin}{Lin}
\DeclareMathOperator{\Rng}{Rng}
\DeclareMathOperator{\realpart}{Re}
\DeclareMathOperator{\sech}{sech}
\DeclareMathOperator{\signum}{sgn}
\DeclareMathOperator{\linspan}{span}
\DeclareMathOperator{\spectrum}{spec}
\newcommand{\nonnorm}{\nabla_{\hspace{-0.075cm} \bot}}
\newcommand{\nontan}{\nabla_{\hspace{-0.075cm} \top}}
\newcommand{\Id}{\mathrm{Id}}
\newcommand*{\placeholder}{{\mkern 2mu\cdot\mkern 2mu}}
\DeclarePairedDelimiter{\abs}{\lvert}{\rvert}
\DeclarePairedDelimiter{\jb}{\langle}{\rangle}
\DeclarePairedDelimiter{\norm}{\lVert}{\rVert}
\DeclarePairedDelimiter{\brac}{\lbrace}{\rbrace}
\DeclarePairedDelimiter{\brak}{\lbrack}{\rbrack}
\DeclarePairedDelimiter{\parn}{\lparen}{\rparen}    
\newcommand{\C}{\mathbb{C}}
\newcommand{\R}{\mathbb{R}}
\newcommand{\N}{\mathbb{N}}
\newcommand{\Xspace}{\mathbb{X}}
\newcommand{\Wspace}{\mathbb{W}}
\newcommand{\Vspace}{\mathbb{V}}
\newcommand{\cinterval}{\mathcal{I}}
\newcommand{\augV}{\mathcal{V}_c^{\mathrm{aug}}}
\newcommand{\eng}{E}
\newcommand{\Hc}{H_c}
\newcommand{\augHam}{E_c}
\newcommand{\mom}{P}
\newcommand{\potE}{V}
\newcommand{\kinE}{K}
\newcommand{\Pmanifold}{\mathcal{M}}
\newcommand{\DN}{G}
\newcommand{\dUdc}{\frac{dU_c}{dc}}
\newcommand{\nbhdO}{\mathcal{O}}
\newcommand{\tube}{\mathcal{U}}
\newcommand{\fund}{\Gamma}
\newcommand{\regindex}{k}
\newcommand{\regW}{5/2}
\newcommand{\regV}{1}
\newcommand{\sTheta}{\Theta|_S}
\numberwithin{equation}{section}
\theoremstyle{plain} 
\newtheorem{theorem}{Theorem}[section] 
\newtheorem{corollary}[theorem]{Corollary}
\newtheorem{lemma}[theorem]{Lemma}
\theoremstyle{remark}
\newtheorem{remark}[theorem]{Remark}
\newtheorem{assumption}{Assumption}
\title[Point vortex stability]{On the stability of solitary water waves with a point vortex}
\author[K. Varholm]{Kristoffer Varholm}
\address{Department of Mathematical Sciences, Norwegian University of Science and Technology, 7491 Trondheim, Norway}
\email{kristoffer.varholm@ntnu.no}
\author[E. Wahl\'en]{Erik Wahl\'en}
\address{Centre for Mathematical Sciences, Lund University,  PO Box 118, 22100 Lund, Sweden}
\email{erik.wahlen@math.lu.se}
\author[S. Walsh]{Samuel Walsh}
\address{Department of Mathematics, University of Missouri, Columbia, MO 65211, USA} 
\email{walshsa@missouri.edu} 
\date{\today}
\subjclass[2010]{Primary 35Q35; Secondary 34G20, 37K45, 76B25}
\begin{document}

\begin{abstract}
    This paper investigates the stability of traveling wave solutions to the free boundary Euler equations with a submerged point vortex. We prove that sufficiently small-amplitude waves with small enough vortex strength are conditionally orbitally stable.  In the process of obtaining this result, we develop a quite general stability/instability theory for bound state solutions of a large class of infinite-dimensional Hamiltonian systems in the presence of symmetry.  This is in the spirit of the seminal work of Grillakis, Shatah, and Strauss (GSS) \cite{grillakis1987stability1}, but with hypotheses that are relaxed in a number of ways necessary for the point vortex system, and for other hydrodynamical applications more broadly.  In particular, we are able to allow the Poisson map to have merely dense range, as opposed to being surjective, and to be state-dependent.
    
    As a second application of the general theory, we consider a family of nonlinear dispersive PDEs that includes the generalized Korteweg--de Vries (KdV) and Benjamin--Ono equations.  The stability or instability of solitary waves for these systems has been studied extensively, notably by Bona, Souganidis, and Strauss \cite{bona1987stability}, who used a modification of the GSS method.  We provide a new, more direct proof of these results, as a straightforward consequence of our abstract theory.  At the same time, we allow fractional dispersion, and obtain a new instability result for fractional KdV.
\end{abstract}

\maketitle
\setcounter{tocdepth}{1}
\tableofcontents

\section{Introduction} \label{introduction section}
    The persistence of localized regions of vorticity is a remarkable feature of two-dimensional incompressible inviscid fluid motion.  For instance, high Reynolds number flow over an immersed body may produce a wake of shed vortices outside of which the velocity field is largely irrotational.  While the small-scale structure of these regions can be quite intricate, their large-scale movement is well predicted by the so-called Helmholtz--Kirchhoff point vortex model, so long as they remain sufficiently isolated.  The stability of various configurations of point vortices in a fixed domain has therefore been the subject of extensive study since the early work of Poincar\'e \cite{poincare1893theorie}. In this paper, we are interested in point vortices carried by water waves.   Unlike the fixed domain case, this will involve understanding the subtle dynamical implications of wave--vortex interactions.  Our main results concern the orbital stability of small-amplitude solitary waves with a single point vortex.
    
    To state things more precisely, by ``water'' we mean an incompressible, homogeneous, and inviscid fluid occupying a time-dependent domain $\Omega_t \subset \R^2$.  For simplicity, assume that at time $t \geq 0$,  $\Omega_t$ consists of the (unbounded) region lying below the graph of a function $\eta = \eta(t,x_1)$, and above $\Omega_t$ is vacuum.  This is a free boundary problem, in the sense that $\eta$ is not prescribed, but evolves dynamically.  
    
    Let ${v} = v(t,\placeholder) \colon \Omega_t \to \R^2$ denote the fluid velocity at time $t \geq 0$.  The \emph{vorticity} is defined to be the quantity
    \begin{equation}
        \label{definition of vorticity}
        \omega \coloneqq \nabla^\perp \cdot v, \qquad \nabla^\perp \coloneqq (-\partial_{x_2},\partial_{x_1}),
    \end{equation}
    measuring the circulation density of the fluid. Mathematically, a \emph{point vortex} describes the situation where $\omega = \epsilon \delta_{\bar{x}(t)}$, a weighted Dirac measure supported at $\bar x = \bar x(t) \in \Omega_t$.  We call $\epsilon$ the \emph{vortex strength} and $\bar x$  the \emph{vortex center}.   It is fairly easy to see that this is not a valid measure-valued solution of the vorticity equation, as the advection term ${v} \cdot \nabla \omega$ has no distributional meaning.   Instead,  we ask only that the velocity field be a weak solution to the incompressible irrotational Euler equations away from the vortex center. That is,
    \begin{subequations} \label{intro point vortex problem}
    \begin{equation}
        \label{euler equations}
        \left\{
        \begin{aligned}
            \partial_t {v}  + \nabla \cdot ( {v} \otimes {v} ) & = -\nabla p - g{e}_2 & & \qquad \text{in } \Omega_t \setminus \brac{\bar{x}(t)}, \\
            \omega & = \epsilon \delta_{\bar{x}(t)} & & \qquad \text{in } \Omega_t, \\
            \nabla \cdot {v} & = 0 & & \qquad \text{in } \Omega_t,
        \end{aligned}
        \right. 
    \end{equation}
    with each of these holding in the sense of distributions.  Here $p = p(t,\placeholder) \colon \Omega_t \to \R$ is the pressure and $g > 0$ is the acceleration due to gravity. We consider the finite \emph{excess} energy case where $v(t) \in L_{\text{loc}}^1(\Omega_t) \cap L^2(\Omega_t\setminus U_t)$ for every neighborhood $U_t \ni \bar{x}(t)$. The motion of the point vortex is taken to be governed by the Helmholtz--Kirchhoff model 
    \begin{equation}
        \label{euler point vortex motion}
        \partial_t \bar{x} = \parn*{v-\frac{\epsilon}{2\pi} \nabla^\perp  \log{\abs{\placeholder -\bar{x}}}}\bigg|_{\bar{x}},
    \end{equation}
    where the subtracted term is the velocity field generated by the point vortex.  Thus \eqref{euler point vortex motion} states that the vortex center does not self-advect, but rather is transported only by the irrotational part of the fluid velocity field.
    
    Finally, the evolution of the free boundary is coupled to that of the fluid by the requirements that
    \begin{equation}
        \label{euler boundary condition}
        \begin{gathered}
            \partial_t \eta = (-\partial_{x_1}\eta,1) \cdot v,\\
            p = -b \partial_{x_1}\parn*{\frac{\partial_{x_1}\eta}{\jb{\partial_{x_1}\eta}}},
        \end{gathered}
    \end{equation}
    \end{subequations}
    on the interface $S_t \coloneqq \partial \Omega_t$, and where $b > 0$ is the coefficient of surface tension, and $\jb{\placeholder} \coloneqq \sqrt{1+\abs{\placeholder}^2}$ is the useful Japanese bracket.  The first of the requirement in \eqref{euler boundary condition} is the kinematic condition, linking the surface to the velocity field.  The second is the dynamic condition, which states that the pressure deviates from atmospheric pressure (normalized here to $0$) in proportion to the signed curvature of the surface.  
    
    Point vortices have been studied in fluid mechanics for centuries.  The specific model \eqref{euler equations}--\eqref{euler point vortex motion} was first proposed by Helmholtz \cite{helmholtz1858integrale} and Kirchhoff \cite{kirchhoff1876vorlesungen} for incompressible fluids in a fixed domain.  Later, Marchioro and Pulvirenti (see \cite{marchioro1993vortices} and \cite[Chapter 4]{marchioro1994book}) offered a rigorous justification by proving that \eqref{euler equations}--\eqref{euler point vortex motion} is the limiting equation governing the motion of vortex patch solutions of the Euler equations as the diameter of the patch approaches $0$.  Another derivation was given by Gallay \cite{gallay2011interaction}, who showed that the system can be obtained as the vanishing viscosity limit for smooth solutions of the Navier--Stokes equation with increasingly concentrated vorticity.   The recent work of Glass, Munnier, and Sueur \cite{glass2018point} provides a second physical interpretation:  they prove that the Helmholtz--Kirchhoff system governs irrotational incompressible inviscid flow around an immersed rigid body, with a fixed circulation around the body, in the limit where the body shrinks to a point in a specific way.
    
    The primary objective in this paper is to study the stability of steady solutions of the water wave with a point vortex problem \eqref{intro point vortex problem}.  An existence theory for waves of this type was given by Shatah, Walsh, and Zeng \cite{shatah2013travelling}.  The analogous problem for capillary-gravity waves in finite-depth water was recently considered by Varholm \cite{varholm2016solitary}, and for gravity waves by Ter-Krikorov \cite{terkrkorov1958vortex} and Filippov \cite{filippov1960vortex,filippov1961motion}. These are among the very few examples of exact steady water waves with localized vorticity currently available.  Numerical studies of water waves with a point vortex have been carried out in \cite{elcrat2006free,curtis2017vortex,doak2017solitary}, for example.
    
    Stated informally, our main result is as follows.  First, observe that in a neighborhood of $S_t$, the velocity field $v$ can be decomposed as 
    \[
        v = \nabla \Phi + \epsilon \nabla \Theta,
    \]
    where $\Phi(t,\placeholder)$ is harmonic in $\Omega_t$, and $\Theta$ is an explicit function depending on $\bar x$ that captures the contribution of the point vortex; see Section~\ref{nonlocal formulation section}.  The system \eqref{intro point vortex problem} can then be reformulated as an equation for $u = ( \eta, \varphi, \bar x)$, where 
    \[
        \varphi = \varphi(t,x_1) \coloneqq \Phi(t, x_1, \eta(t, x_1)).
    \]
    A \emph{solitary wave} in this setting corresponds to a solution of the form
    \[
        u(t, x_1) = (\eta^c(x_1-ct), \varphi^c(x_1-ct), \bar x^c + cte_1),
    \]
    for some spatially localized $(\eta^c, \varphi^c, \bar x^c)$ and wave speed $c \in \R$.  
    
    \begin{theorem}[Main result]
        \label{main theorem}
        Every symmetric solitary capillary-gravity water wave with a point vortex $(\eta^c, \varphi^c, \bar x^c)$ having $(\eta^c, \varphi^c)$, $c$, and  $\epsilon$ sufficiently small is \emph{conditionally orbitally stable} in the following sense.  For all $R > 0$ and $\rho > 0$, there exists $\rho_0> 0$ such that, if $(\eta, \varphi, \bar x)$ is any solution defined on a time interval $[0, t_0)$, obeying a bound
        \begin{equation}
            \label{main theorem W bound}
            \sup_{t \in [0, t_0)} \parn*{\norm{\eta(t)}_{H^{3+}} + \norm{\varphi(t)}_{\dot{H}^{\frac{5}{2}+} \cap \dot{H}^{\frac{1}{2}}} + \abs{\bar x_2(t)} } < R, 
        \end{equation}
        and having initial data satisfying
        \[
            \norm{ \eta(0) - \eta^c }_{H^{1}} + \norm{ \varphi(0)  - \varphi^c }_{\dot{H}^{\frac{1}{2}}}+ \abs{ \bar x(0) - \bar x^c } < \rho_0,
        \]
        then 
        \begin{equation}
            \label{main theorem X bound}
            \adjustlimits\sup_{t \in [0, t_0)}\inf_{s \in \R} \left( \| \eta(t, \placeholder -s) - \eta^c  \|_{H^{1}} +  \| \varphi(t, \placeholder - s)  - \varphi^c \|_{ \dot{H}^{\frac{1}{2}}} + | \bar x(t) + s e_1 - \bar x^c | \right) < \rho.
        \end{equation}
    \end{theorem}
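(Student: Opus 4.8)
\section*{Proof proposal}

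The plan is to recognize the solitary wave $(\eta^c,\varphi^c,\bar x^c)$ as a relative equilibrium of an infinite-dimensional Hamiltonian system that is invariant under horizontal translations, and then to apply the abstract orbital stability theorem that this paper develops. Concretely, using the reformulation of Section~\ref{nonlocal formulation section}, the evolution of $u=(\eta,\varphi,\bar x)$ is governed by the (excess) energy $E$ together with a \emph{state-dependent} Poisson map $J(u)$ whose range is merely dense in the energy space $\Xspace$ — it fails to be surjective because of the elliptic problems defining $\Theta$ and the Dirichlet--Neumann type operator, and it depends on $u$ through the free surface and the vortex center. Horizontal translation is a one-parameter symmetry group whose conserved Noether quantity is the horizontal momentum $P$, and the solitary wave is precisely a critical point of the augmented Hamiltonian $E_c\coloneqq E-cP$, i.e.\ a bound state. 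The first task is therefore to check that the concrete point-vortex system verifies the regularity and structural hypotheses of the abstract framework: that $E$ and $P$ are sufficiently smooth on the relevant scale of Sobolev spaces ($\eta\in H^1$, $\varphi\in\dot H^{1/2}$, $\bar x\in\Omega$, with the higher space $\Wspace$ supplying enough smoothness to differentiate the flow), that the group action is suitably continuous, and — crucially — that the generality we have built in (dense-range and state-dependent $J$) is exactly what the hydrodynamic formulation requires. The symmetry assumption on $(\eta^c,\varphi^c)$ is used here to pin down the group orbit and to rule out spurious elements of the kernel of $E_c''$.

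The core analytic input is the spectral picture of the Hessian $E_c''(u^c)$, viewed as a self-adjoint operator (or bounded-below bilinear form) on $\Xspace$. One must show that it has a one-dimensional kernel spanned by the infinitesimal generator $\partial_{x_1}u^c$ of the translation symmetry, that it has exactly one simple negative eigenvalue, and that the remainder of its spectrum is bounded below by a positive constant. Because the waves in question have $(\eta^c,\varphi^c)$, $c$, and $\epsilon$ all small, I would establish this perturbatively: as $\epsilon\to0$ and $c\to0$ the wave collapses, and after an appropriate long-wave rescaling $E_c''(u^c)$ becomes a small relatively bounded perturbation of the linearized operator of a limiting dispersive model of the class treated in our second application, whose negative index and coercivity gap are explicitly known (or else one computes the model spectrum directly). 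A Lyapunov--Schmidt reduction absorbs the finite-dimensional vortex coordinate $\bar x$, which enters only as a bounded correction and does not alter the index; the point is that the coercivity constant can be chosen uniformly in $\epsilon$ and $c$, which is what lets the perturbation argument close.

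Next one computes the moment of instability $d(c)\coloneqq E_c(u^c)$. Since $u^c$ is a critical point of $E_c$, one has $d'(c)=-P(u^c)$ and hence $d''(c)=-\tfrac{d}{dc}P(u^c)$; expanding $P(u^c)$ along the small-amplitude branch (using the smooth dependence $c\mapsto u^c$ from the existence theory together with the implicit function theorem) and again comparing with the limiting model yields $d''(c)>0$ in this regime, i.e.\ the subcritical, stable case. With the spectral hypotheses and the strict convexity $d''(c)>0$ in hand, the abstract stability theorem applies and gives orbital stability in the energy norm. The conclusion is \emph{conditional} because no local (let alone global) well-posedness theory is available in $\Xspace$: the abstract theorem is itself formulated conditionally, requiring the solution to exist on $[0,t_0)$ and to remain bounded in the stronger space $\Wspace$ — which is exactly the hypothesis \eqref{main theorem W bound}. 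Translating the abstract stability estimate back into the concrete norms then produces precisely \eqref{main theorem X bound}.

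The main obstacle is the second step — the spectral analysis of $E_c''(u^c)$ — and, within it, the fact that $J$ is only densely defined and state-dependent. The classical Grillakis--Shatah--Strauss argument relates the negative cone of $E_c''$ to the spectrum of $JE_c''$ and uses surjectivity of the symplectic map at several junctures, so those steps must be reworked; supplying a framework in which this is possible is the purpose of the abstract part of the paper, and here it must be verified that the point-vortex system fits. The coupling of the finite-dimensional vortex dynamics to the infinite-dimensional surface, and the need for eigenvalue estimates uniform in the small parameters $\epsilon$ and $c$, are the remaining technical difficulties.
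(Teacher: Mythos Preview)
Your overall architecture is right --- the proof does proceed by verifying the abstract hypotheses for the Hamiltonian formulation of the point vortex problem, computing $d''(c)>0$, and invoking the general stability theorem --- but your spectral step contains a genuine misconception that would cause the argument to fail.

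You propose to obtain the one negative eigenvalue and the spectral gap of $E_c''(U_c)$ by a long-wave rescaling that reduces the infinite-dimensional $(\eta,\varphi)$ block to the linearized operator of a KdV-type model, treating the vortex coordinate $\bar x$ as a bounded correction that ``does not alter the index''. The paper's mechanism is the opposite. At $\epsilon=0$ the bound state is trivial, and $H_c$ degenerates to the block-diagonal operator
\[
\begin{pmatrix} g - b\partial_{x_1}^2 & 0 & 0\\ 0 & |\partial_{x_1}| & 0\\ 0 & 0 & 0 \end{pmatrix}
\in \Lin(\Xspace,\Xspace^*),
\]
whose $(\eta,\varphi)$ part is \emph{strictly positive} (with spectrum bounded away from $0$), while the $2\times 2$ zero block on the $\bar x$ component supplies a double eigenvalue at the origin. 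For $0<|\epsilon|\ll1$ this double zero splits: one eigenvalue is pinned at $0$ by the translation generator $T'(0)U_c$, and the other is forced negative because $\tfrac{dU_c}{dc}$ is a negative direction (equivalently, because $d''(c)>0$). There is no KdV limit here --- the waves bifurcate from rest, not from a rescaled soliton --- and the negative index is carried entirely by the finite-dimensional vortex sector, not by the free-surface sector. If you reduce out $\bar x$ as you suggest, you lose both the kernel and the negative direction and are left with a positive operator; the argument cannot close that way.

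Relatedly, the computation of $d''(c)$ does not go through any limiting dispersive model. One simply uses $d'(c)=-P(U_c)$ together with the explicit leading-order asymptotics $c(\epsilon,a)=-\epsilon/(4\pi a)+O(\epsilon^3)$ and $\nabla P(U_c)=\epsilon(\Theta_{x_1}|_{x_2=0},0,e_2)+O(\epsilon^2)$ from the existence theory to obtain $d''(c)=4\pi a^2+O(\epsilon^2)>0$.
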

    
    A more precise version is given in Theorem~\ref{main point vortex theorem}. Several remarks are in order. Orbital here refers to the fact that we are controlling the distance to the family of translates of the steady wave; this is natural given the translation-invariant nature of the problem. It is also important to note that $\rho_0$ above is independent of $t_0$, and hence the conclusion of Theorem~\ref{main theorem} is much stronger than just continuity of the solution map at $(\eta^c, \varphi^c, \bar x^c)$. Indeed, for a global-in-time solution, this gives orbital stability in the classical sense.  The norm occurring in \eqref{main theorem W bound} represents the lowest regularity in which a local well-posedness theory has been established for irrotational capillary-gravity waves \cite{alazard2011surface}.  On the other hand, the norm in \eqref{main theorem X bound} is associated to the physical energy for the system, which we will discuss shortly.
    
    Our approach is to rewrite \eqref{intro point vortex problem} as an infinite-dimensional Hamiltonian system of the general form
    \[
        \frac{du}{dt} = J(u) DE(u),
    \]
    with $u$ appropriate Banach space. Here, $E$ is a functional (the energy), and $J$ is a state-dependent skew-adjoint operator (the Poisson map). A similar system was established formally by Rouhi and Wright \cite{rouhi1993hamiltonian}; we use a slightly different version, and give a rigorous proof in Section~\ref{point vortex hamiltonian section}.  
    
    As the entire problem is invariant under translation, there is a conserved momentum functional $P = P(u)$.  A natural strategy for analyzing the (orbital) stability of bound states in abstract Hamiltonian systems with symmetries is to use the \emph{energy-momentum method} first introduced by Benjamin \cite{benjamin1972stability}.  In brief, this method involves constructing a Lyapunov functional using a carefully chosen combination of $E$ and $P$.  Actually carrying out this argument, however, can be quite challenging.  Over three decades ago, Grillakis, Shatah, and Strauss \cite{grillakis1987stability1} introduced a powerful machinery --- now commonly referred to as the GSS method --- which essentially reduced these many difficulties down to discerning the convexity or concavity of a single scalar-valued quantity called the moment of instability.  
    
    Not surprisingly, this paper had an enormous impact on the field and generated a great deal of research activity.    However, the hypotheses of GSS limit somewhat its applicability to infinite-dimensional Hamiltonians with more complicated structure.  For instance, they require that the operator $J$ is surjective, and independent of the state $u$.  But, recall that the Poisson map for the Korteweg--de Vries (KdV) equation is $\partial_x$, which is not surjective in the natural class of spaces. In fact, for water waves with a point vortex \eqref{intro point vortex problem}, we will see that $J$ is neither independent of state, nor surjective.
    
    There is also a somewhat practical issue with the functional analytic setting.  Consider for a moment the irrotational case.  GSS supposes that the Cauchy problem is globally well-posed in the energy space.  But, as remarked above, the local well-posedness of the gravity water wave problem  with surface tension proved by Burq, Alazard, and Zuily in \cite{alazard2011surface} takes $\eta(t) \in H^{3+}$ and $\varphi(t) \in  \dot H^{5/2+} \cap \dot H^{1/2}$.  On the other hand, the kinetic energy is given by the much rougher $\| v \|_{L^2}^2$, and the potential energy is equivalent to $\| \eta \|_{H^1}^2$.  Moreover, writing the kinetic energy in terms of $(\eta, \varphi)$ yields 
    \[
        \norm{ v }_{L^2(\Omega_t)}^2 = \frac{1}{2}\int_{\R} \varphi \DN(\eta) \varphi \, dx_1,
    \]
    where $\DN(\eta)$ is the Dirichlet--Neumann operator; see the discussion in Section~\ref{nonlocal formulation section}.  For this energy to be smooth as a functional of $(\eta, \varphi)$ in the Sobolev setting, one must have that $\eta \in H^{3/2+} \hookrightarrow W^{1,\infty}$.   In effect, then, there are three levels of regularity:  a rough space in which the physical energy is defined, an intermediate space where the energy functional is smooth, and a yet higher regularity space where we can hope to have well-posedness.  This situation is exceedingly common in the analysis of quasilinear equations.  Indeed, it is the natural by-product of so-called higher-order energy estimates, which are among the most basic and widespread tools in nonlinear PDE theory.
    
    With that in mind, as one of the primary contributions of this paper, we introduce a new abstract stability/instability-result in the spirit of GSS, but with relaxed assumptions; making it directly applicable to problems such as \eqref{intro point vortex problem}.  Specifically, we allow for a large class of state-dependent Poisson maps $J = J(u)$, and essentially only require that $J$ is injective with dense range.  Moreover, the entire theory is formulated in a scale of Banach spaces, offering a simple way to accommodate gaps between the necessary regularity levels for the energy. Finally, in view of the point vortex problem, we allow the symmetry group to be merely affine.
    
    There are a number of new assumptions and technical conditions, but the main conclusion is the same as that of GSS: stability or instability of the bound state hinges on the sign of a scalar quantity. Because of the mismatch in spaces, our results are conditional in the sense that they only hold on a time interval in which the solutions of the problem exist and their growth is controlled.  Using this general theory, we are then able to address the question of stability of traveling water waves with a point vortex and prove Theorem~\ref{main theorem}. Finally, we also consider a  further application of this same framework to KdV, and related dispersive model equations. 
    
    One of the main inspirations for this paper is Mielke's  work on conditional energetic stability of irrotational solitary waves on water of finite depth with strong surface tension  \cite{mielke2002energetic}, in which he also had to modify the GSS method to deal with the mismatch between well-posedness and energy spaces. While our basic strategy is the same, we make the additional effort of formulating a general theory which also deals with instability. On a technical level, the presence of the point vortex requires a number of non-trivial modifications. Mielke's work was followed by a series of papers proving the existence and conditional stability of different families of solitary water waves by a variational approach in which the waves are constructed using the direct method of the calculus of variations as minimizers of the energy subject to the constraint of fixed momentum. The stability of the set of minimizers then follows directly from classical arguments by Cazenave and Lions. In particular, Buffoni \cite{buffoni2004stability} considered solitary waves on finite depth with strong surface tension. He also obtained partial results in the case of finite depth and weak surface tension, as well as in the case of infinite depth \cite{buffoni2005conditional, Buffoni09}, which were later completed by Groves and  Wahl\'en \cite{groves2010existence, GrovesWahlen11}. More recently, these authors also extended the method to solitary water waves with constant vorticity \cite{groves2015existence}. Similar to the present study, the Hamiltonian formulation is non-canonical in that case. It is likely that direct variational methods could be used also in the presence of point vortices.
    
    \subsection*{Plan of the article}
        In Section~\ref{abstract formulation section}, we give a detailed description of our results regarding conditional orbital stability and instability of bound states in abstract Hamiltonian systems with symmetry. Our main result on orbital stability is Theorem~\ref{abstract stability theorem}, which is proved in Section~\ref{abstract stability section}.  The unstable case is addressed in Theorem~\ref{abstract instability theorem}, whose proof is carried out in Section~\ref{abstract instability}.  
        
        We return to the water wave with a point vortex problem in Section~\ref{point vortex formulation section}, where it is shown that \eqref{intro point vortex problem} can be reformulated as an infinite-dimensional Hamiltonian system of the type covered by the general theory.  In Sections~\ref{point vortex spectrum section}, we characterize the spectrum of the so-called linearized augmented Hamiltonian at a solitary wave, which is used to prove our main result: small-amplitude and small vorticity symmetric solitary capillary-gravity water waves with a point vortex are conditionally orbitally stable; see Theorem~\ref{main point vortex theorem}.
        
        To demonstrate the broader implications of the general theory, we consider a large family of nonlinear dispersive PDEs in Section~\ref{dispersive section}. These serve as approximate models for water waves, and include both the KdV and Benjamin--Ono (BO) equations.  Because the corresponding $J$ is not surjective between the relevant spaces, these equations lie outside the GSS framework. In \cite{bona1987stability}, Bona, Souganidis, and Strauss overcame this difficulty by supplementing the basic approach of GSS with a consideration of the mass.  On the other hand, the general theory we develop in the present paper can be directly applied to this family of equations, meaning we are able to give a new proof of the Bona--Souganidis--Strauss theorem as a straightforward application.  In fact, this also furnishes new instability results for fractional KdV; see Theorem~\ref{bss stability theorem}

\section{General setting and main results}
    \label{abstract formulation section}
    
    \subsection{Formulation and hypotheses}
        \label{abstract formulation assumptions section}
        
        We will work with a scale of spaces
        \[
            \Wspace \hookrightarrow \Vspace \hookrightarrow \Xspace, 
        \]
        where $\Xspace$ is a real Hilbert space, while $\Vspace$ and $\Wspace$ are reflexive Banach spaces. The inner product on $\Xspace$ will be denoted by $(\placeholder, \placeholder)_{\Xspace}$, and the corresponding norm by $\norm{\placeholder}_{\Xspace}$.  Likewise, let $\norm{\placeholder}_{\Vspace}$ and $\norm{\placeholder}_{\Wspace}$ be the norms for $\Vspace$ and $\Wspace$, respectively.  We write $\Xspace^*$ for the (continuous) dual of $\Xspace$, which is naturally isomorphic to $\Xspace$ via the mapping $I \colon \Xspace \to \Xspace^*$ taking $u \in \Xspace$ to $(u, \placeholder)_{\Xspace} \in \Xspace^*$. We will not make this identification here, but rather use $I$ explicitly.  On the other hand, we will simply identify $\Xspace^{**}$ with $\Xspace$, and likewise for $\Vspace$ and $\Wspace$.  The pairing of $\Xspace$ and $\Xspace^*$ we denote by $\jb{\placeholder, \placeholder}_{\Xspace^* \times \Xspace}$, while $\jb{ \placeholder, \placeholder }_{\Wspace^* \times \Wspace}$ is the pairing between $\Wspace^*$ and $\Wspace$; when there is no risk of confusion, we will omit the subscript.   
        
        Intuitively, $\Xspace$ is the energy space for the system under consideration.  This is where the Hamiltonian structure will be formulated, and is the natural setting for analyzing the spectrum. On the other hand, $\Vspace$ is a space where the conserved quantities are smooth. Finally, we think of $\Wspace$ as a ``well-posedness space'', with the norm coming from higher-order energy estimates used to prove that the Cauchy problem is at least locally well-posed in time. The norm on $\Wspace$ also plays the secondary role of allowing us to get control over $\Vspace$ via interpolation.  More precisely, we require the following:
        
        \begin{assumption}[Spaces]
            \label{abstract interpolation assumption}
            Let $\Xspace$, $\Vspace$, and $\Wspace$ be given as above.  Assume that there exist constants $\theta \in (0,1]$ and $C> 0$ such that
            \begin{equation}
                \label{abstract interpolation inequality}
                \norm{u}_{\Vspace}^3 \leq C \norm{u}_{\Xspace}^{2+\theta} \norm{u}_{\Wspace}^{1-\theta}
            \end{equation}
            for all $u \in \Wspace$.
        \end{assumption}
        \begin{remark}
            \label{continuity remark}
            A useful consequence of \eqref{abstract interpolation inequality} is that, if $F \in C^3(\Vspace; \R)$, and $B \subset \Wspace$ is a bounded set, then
            \[
               F(x + h) -F(x)  =  \jb{DF(x),h} + \frac{1}{2}\jb{D^2 F(x)h,h} + O(\norm{h}_{\Xspace}^{2+\theta})
            \]
            for $x \in \Vspace$ and $h \in B$.
        \end{remark}
        
        It is often necessary to restrict attention to some smaller subset of these spaces in order to ensure that the problem is well-defined.  For example, in the case of the traveling waves with a point vortex, there must be a positive separation between the vortex center and the air--sea interface.  Abstractly, we will handle these types of situations by introducing an open set $\nbhdO \subset \Xspace$, where solutions must live.
        
        Suppose that $\hat{J} \colon D(J) \subset \Xspace^* \to \Xspace$ is a closed linear operator, and that we for each $u \in \nbhdO \cap \Vspace$ have a bounded linear operator $B(u) \in \Lin(\Xspace)$. We endow $\Xspace$ with symplectic structure in the form of the state-dependent Poisson map
        \begin{equation}
            \label{abstract state dependent poisson map}
            J(u) \coloneqq B(u)\hat{J},
        \end{equation}
        which is required to satisfy a number of hypotheses.
        
        \begin{assumption}[Poisson map]
            \label{abstract symplectic assumption}
            \leavevmode
            \begin{enumerate}[label=\rm(\roman*)]
                \item The domain $\Dom(\hat{J})$ is dense in $\Xspace^*$.     \label{J densely defined assumption}
                \item \label{J injectivity assumption} $\hat{J}$ is injective.
                \item \label{B bijective assumption} For each $u \in \nbhdO \cap \Vspace$, the operator $B(u)$ is bijective.
                \item \label{regularity of J} The map $u \mapsto B(u)$ is of class $C^1(\nbhdO \cap \Vspace;\Lin(\Xspace)) \cap C^1(\nbhdO \cap \Wspace; \Lin(\Wspace))$.
                \item For each $u \in \nbhdO \cap \Vspace$, $J(u)$ is skew-adjoint in the sense that
                \[
                \jb{J(u)v,w} = -\jb{v,J(u)w}
                \]
                for all $v,w \in \Dom(\hat{J})$.
            \end{enumerate}
        \end{assumption}
        \begin{remark}
            Note that this does not assume that $J(u)$ is surjective, which is a significant departure from GSS.  Below, we will require something slightly stronger than that the range of $J(u)$ is dense in $\Xspace$.  
        \end{remark}
        
        The main object of interest for this work is the abstract Hamiltonian system 
        \begin{equation}
            \label{abstract Hamiltonian system}
            \frac{du}{dt} = J(u) D \eng(u), \qquad u|_{t=0} = u_0,
        \end{equation}
        where $\eng \in C^3(\nbhdO \cap \Vspace; \R)$ is the \emph{energy functional}.   In addition to the energy, we suppose that there is a second conserved quantity $\mom \in C^3(\nbhdO \cap \Vspace; \R)$, which we call the \emph{momentum}. In order to state what it means to be a solution of \eqref{abstract Hamiltonian system}, and to work with it in a meaningful way, we need to be able to view $D\eng(u)$ and $D\mom(u)$ as elements of $\Xspace^*$.
        
        \begin{assumption}[Derivative extension] \label{extend DP and DE assumption} There exist mappings $\nabla \eng, \nabla \mom \in C^0(\nbhdO \cap \Vspace;\Xspace^*)$ such that $\nabla \eng (u)$ and $\nabla \mom (u)$ are extensions of $D\eng(u)$ and $D\mom(u)$, respectively, for every $u \in \nbhdO \cap \Vspace$.
        \end{assumption}
        
        We say that $u  \in C^0([0,t_0);  \nbhdO \cap \Wspace)$ is a solution of \eqref{abstract Hamiltonian system} on the interval $[0, t_0)$ if
        \begin{equation}
            \label{weak abstract Hamiltonian system}
            \frac{d}{dt}\left\langle u(t), \, w \right\rangle = -\left\langle \nabla\eng(u(t)), \,  J(u(t)) w \right\rangle \qquad \text{ for all $w \in \Dom(\hat{J})$,}
        \end{equation}
        is satisfied in the distributional sense on $(0,t_0)$, the initial condition $u(0) = u_0$ is satisfied, and both $\eng$ and $\mom$ are conserved.
        
        Of particular importance is the situation where the system \eqref{abstract Hamiltonian system} is invariant with respect to a symmetry group. Specifically, we assume that there exists a one-parameter family of affine maps $T(s) \colon \Xspace \to \Xspace$, with linear part $dT(s)u \coloneqq T(s)u - T(s)0$, having the properties described below. We refer to \cite{goldstein1988semigroups} for a background on affine groups on Banach spaces.
        
        \begin{assumption}[Symmetry group]
            \label{abstract symmetry assumption}
            The symmetry group $T(\placeholder)$ satisfies the following.  
            \begin{enumerate}[label=\rm(\roman*)]
                \item \label{invariances} \emph{(Invariance)}
                    The neighborhood $\nbhdO$, and the subspaces $\Vspace$ and $\Wspace$, are all invariant under the symmetry group. Moreover, $I^{-1}\Dom(\hat{J})$ is invariant under the \emph{linear} symmetry group.
                \item \label{group flow property} \emph{(Flow property)}
                    We have $T(0) = dT(0) = \mathrm{Id}_\Xspace$, and for all $s, r \in \R$,
                    \[
                        T(s+r) = T(s)T(r), \qquad \text{and hence} \qquad dT(s+r) = dT(s)dT(r).
                    \] 
                \item \label{unitary assumption} \emph{(Unitarity)}
                    The linear part $dT(s)$ is a unitary operator on $\Xspace$, and an isometry on $\Vspace$ and $\Wspace$, for each $s \in \R$.
                \item \label{strong continuity} \emph{(Strong continuity)}
                    The symmetry group is strongly continuous on $\Xspace$, $\Vspace$, and $\Wspace$.
                \item \label{affine bound assumption} \emph{(Affine part)} 
                    The function $T(\placeholder)0$ belongs to $C^3(\R; \Wspace)$, and there exists an increasing function $\omega \colon [0,\infty) \to [0,\infty)$ such that
                    \[
                    \norm{T(s)0}_{\Wspace} \leq \omega(\norm{T(s)0}_{\Xspace}), \quad \text{for all } s \in \R.
                    \]
                \item \label{commutativity assumption} \emph{(Commutativity with $J$)}
                    For all $s \in \R$,
                    \begin{equation}
                        \label{abstract commutation identity}
                        \begin{aligned}
                            \hat{J}I dT(s) &= dT(s)\hat{J}I,\\
                            dT(s)B(u) &= B(T(s)u)dT(s), \quad \text{for all } u \in \nbhdO\cap \Vspace.
                        \end{aligned}
                    \end{equation}
                \item \label{T'(0) assumption} \emph{(Infinitesimal generator)}
                    The infinitesimal generator of $T$ is the affine mapping 
                    \[
                        T'(0)u=\lim_{s \to 0} \parn*{s^{-1}(T(s)u - u)} = dT'(0)u + T'(0)0,
                    \]
                    with dense domain $\Dom(T^\prime(0)) \subset \Xspace$ consisting of all $u\in \Xspace$ such that the limit exists in $\Xspace$ (note that $\Dom(T^\prime(0)) =\Dom(dT^\prime(0))$ by the first part of assumption \ref{affine bound assumption}). Similarly, we may speak of the dense subspaces $\Dom(T'(0)|_\Vspace) \subset \Vspace$ and $\Dom(T'(0)|_\Wspace) \subset \Wspace$ on which the limit exists in $\Vspace$ and $\Wspace$, respectively.
                    
                    We assume that $\nabla P (u) \in \Dom(\hat{J})$ for every $u \in \Dom(T'(0)|_\Vspace) \cap \nbhdO$, and that
                    \begin{equation}
                        \label{abstract T'(0) and P' identity}
                        T'(0)u = J(u)\nabla P (u)
                    \end{equation}
                    for all such $u$. Moreover, we assume that
                    \begin{equation}
                        \label{abstract derivative commutation identity}
                        \hat{J}IdT'(0) = dT'(0)\hat{J}I.
                    \end{equation}
                \item \label{range density} \emph{(Density)}
                    The subspace
                    \[
                        \Dom(T'(0)|_\Wspace) \cap \Rng{\hat{J}}
                    \]
                    is dense in $\Xspace$.
                \item \label{T conserves energy} \emph{(Conservation)}
                    For all $u \in \nbhdO \cap \Vspace$, the energy is conserved by flow of the symmetry group:
                    \begin{equation}
                        \label{abstract energy invariance under T}
                        \eng(u) = \eng(T(s)u), \qquad \text{for all } s \in \R.
                    \end{equation}
            \end{enumerate}
        \end{assumption}
        \begin{remark}
            \label{symmetry group remark}  
            There are some immediate consequences of the above assumptions. We can combine parts \ref{group flow property} and \ref{commutativity assumption}  to deduce that
            \[
                dT(s) J(u) I = J(T(s)u)IdT(s), \qquad \text{for all } s \in \R, \, u \in \nbhdO\cap \Vspace,
            \]
            and as a consequence of the unitarity of $dT(s)$, the operator $dT'(0)$ is skew-adjoint on $\Xspace$. Moreover, if $u \in \Dom(T'(0)|_\Vspace) \cap \nbhdO$, then $s \mapsto P(T(s)u)$ has derivative
            \[
                \jb{\nabla \mom(T(s)u),T'(0)T(s)u} = \jb{\nabla \mom(T(s)u),J(T(s)u)\nabla \mom (T(s)u)} = 0
            \]
            by \eqref{abstract T'(0) and P' identity} and the skew-adjointness of $J(T(s)u)$. Thus, by density of $\Dom(T'(0)|_\Vspace)$ in $\Vspace$, the flow of the symmetry group also conserves the momentum for all $u \in \nbhdO \cap \Vspace$:
            \begin{equation}
                \label{abstract momentum invariance under T}
                \mom(u) = \mom(T(s)u), \qquad \text{for all } s \in \R.
            \end{equation}
            
        \end{remark}
        
        We say that $u \in C^1(\R; \nbhdO \cap \Wspace)$ is a \emph{bound state} of the Hamiltonian system \eqref{abstract Hamiltonian system} provided that it is a solution of the form
        \[
            u(t) = T(c t) U_c,
        \]
        for some $c \in \R$ and $U_c \in \nbhdO \cap \Wspace$. We will also refer to $U_c$ itself as a bound state. If $T$ represents translation, then bound states correspond to the familiar notion of traveling waves, such as the ones we will study later. For the general setting, we take it as given that an analogous family is available:
        
        \begin{assumption}[Bound states]
            \label{bound state assumption}
            There exists a one-parameter family of bound state solutions $\{ U_c :  c \in \cinterval \}$, where $\cinterval \subset \R$ is a non-empty open interval, to the Hamiltonian system \eqref{abstract Hamiltonian system}. The family enjoys the following properties.
            \begin{enumerate}[label=\rm(\roman*)]
                \item \label{bound states improved regularity}
                    The mapping $c \in \cinterval    \mapsto U_c \in \nbhdO \cap \Wspace$ is $C^1$.
                \item \label{bound state domain assumption}
                    For all $c \in \cinterval$, 
                    \begin{equation}
                        \label{technical bound state assumption}
                        U_c \in \Dom(T'''(0)) \cap \Dom(\hat{J}I T'(0)),
                    \end{equation}
                    and
                    \begin{equation}
                        \label{second technical bound state assumption}
                        U_c, \hat{J}IT'(0)U_c \in \Dom(T'(0)|_\Wspace).
                    \end{equation}
                \item \label{bound state non-degeneracy}
                    The non-degeneracy condition $T'(0) U_c \neq 0$ holds for every $c \in \cinterval$. Equivalently, due to \eqref{abstract T'(0) and P' identity}, $U_c$ is never a critical point of the momentum.
                \item \label{non-periodic bound state assumption}
                    Either $s\mapsto T(s) U_c$ is periodic, or $\liminf_{|s| \to \infty} \norm{ T(s) U_c - U_c }_{\Xspace} > 0$.
            \end{enumerate}
        \end{assumption}
        
        Observe that, due to \eqref{abstract energy invariance under T} and \eqref{abstract momentum invariance under T}, the energy and momentum of $T(s) U_c$ are independent of $s$.  For a fixed parameter $c$, the corresponding \emph{augmented Hamiltonian} is the functional $\augHam \in C^3(\Vspace \cap \nbhdO; \R)$ defined by 
        \[ \augHam(u) \coloneqq \eng(u) - c \mom(u).\]
        Assumption~\ref{bound state assumption} ensures that $U_c \in \Dom(T^\prime(0))$, and so it follows from \eqref{weak abstract Hamiltonian system}, \eqref{abstract T'(0) and P' identity}, and Assumption~\ref{abstract symplectic assumption} that
        \begin{equation}
            \label{abstract stationary equation}
            D\augHam(U_c) = D\eng(U_c) - c D \mom(U_c) = 0,
        \end{equation}
        meaning $U_c$ is a critical point of $\augHam$. Due to this observation, we can think of each of the bound state $U_c$ as being a critical point of the energy with the constraint of a fixed momentum, with the wave speed $c$ arising naturally as a Lagrange multiplier.  Also, differentiating \eqref{abstract stationary equation} with respect to $c$ reveals that 
        \begin{equation}
            \label{D^2 E = DP relation}
            \jb*{ D^2 \augHam(U_c) \frac{dU_c}{dc}, \placeholder } = \jb*{D\mom(U_c), \placeholder }.
        \end{equation}
        
        Commonly in applications, the bound states sit at a saddle point of the energy.  That is, the second derivative of the augmented Hamiltonian at $U_c$ has a single simple negative (real) eigenvalue, a $0$ eigenvalue generated by the symmetry group, and the rest of the spectrum lies along the positive real axis; bounded uniformly away from the origin.  This is the basic setting of the problem considered in Grillakis, Shatah, and Strauss \cite{grillakis1987stability1}, and it is precisely what we will encounter in our study of water waves later.  We therefore make the following hypotheses about the configuration of the spectrum for the general theory.
        
        \begin{assumption}[Spectrum]
            \label{spectral assumptions}
            The operator $D^2 \augHam(U_c) \in \Lin(\Vspace,\Vspace^*) $ extends uniquely to a bounded linear operator $\Hc \colon \Xspace \to \Xspace^*$ such that:
            \begin{enumerate}[label=\rm(\roman*)]
                \item \label{extensibility assumption}
                    $I^{-1} \Hc$ is self-adjoint on $\Xspace$.
                \item \label{spectrum config assumption}
                    The spectrum of $I^{-1} \Hc$ satisfies 
                    \begin{equation}
                        \label{spectrum of Hc}
                        \spectrum{(I^{-1} \Hc)} = \brac{-\mu_c^2 } \cup \brac{ 0 } \cup \Sigma_c,
                    \end{equation}
                    where $-\mu_c^2 < 0$ is a simple eigenvalue corresponding to a unit eigenvector $\chi_c$, $0$ is a simple eigenvalue generated by $T$, and $\Sigma_c \subset (0,\infty)$ is bounded away from $0$.  
            \end{enumerate}
        \end{assumption}
    
    \subsection{Main results on stability and instability}
        The central question we wish to address is whether the bound states of \Cref{spectral assumptions} are stable or unstable.  As there is an underlying invariance with respect to the group $T$, it is most natural to understand stability and instability in the orbital sense.   For any $U \in \Xspace$, we call the set $\brac{ T(s) U : s \in \R}$ the \emph{$U$-orbit generated by $T$}.  Formally speaking, $U_c$ is \emph{orbitally stable} provided that any solution to the Cauchy problem that is initially close enough to the $U_c$-orbit generated by $T$ (in the $\Xspace$ norm) remains near the orbit for all time.  Conversely, \emph{orbital instability} describes the situation where there exists initial data arbitrarily close to the $U_c$-orbit that nevertheless leaves some neighborhood of the orbit in finite time.   
        
        Making these concepts rigorous for the problem at hand is complicated both by the lack of a global well-posedness theory for the Cauchy problem \eqref{abstract Hamiltonian system}, and especially the mismatch of the energy and well-posedness spaces.  For that reason, all of our results will necessarily be \emph{conditional} in that they will hold only so long as we know the solution exists, and that its growth in $\Wspace$ is controllable.  
        
        The \emph{moment of instability}, which we call $d$, is the scalar-valued function that results from evaluating the augmented Hamiltonian along the family of bound states:
        \begin{equation}
            \label{abstract d definition}
            d(c) \coloneqq \augHam(U_c) = \eng(U_c) - c \mom(U_c).
        \end{equation} 
        Note that because each bound state $U_c$ is a critical point of the augmented Hamiltonian, differentiating $d$ gives the identity 
        \begin{equation}
            \label{abstract d' identity}
            d'(c) = \jb*{ D \augHam(U_c), \frac{dU_c}{dc}} - \mom(U_c) = -\mom(U_c),
        \end{equation}
        and differentiating once more yields
        \begin{equation}
            \label{abstract d'' identity}
            d''(c) = -\jb*{D\mom(U_c), \frac{dU_c}{dc} } = -\jb*{ D^2 \augHam(U_c) \frac{dU_c}{dc}, \frac{dU_c}{dc}},
        \end{equation}
        where the last equality follows from \eqref{D^2 E = DP relation}. 
         
        For each $\rho > 0$, let
        \[
            \tube_\rho^\Xspace \coloneqq \brac*{ u \in \nbhdO : \inf_{s \in \R}{\norm{ u - T(s) U_c }_{\Xspace}} < \rho }
        \] 
        be the tubular neighborhood of radius $\rho$ in $\Xspace$ for the $U_c$-orbit generated by $T$. We also define
        \[
            \mathcal{B}_R^\Wspace \coloneqq \brac*{ u \in \nbhdO \cap \Wspace : \inf_{s \in \R}{\norm{T(s)u}_{\Wspace}} < R }
        \]
        for all $R > 0$, which collapses to a ball if the symmetry group has no affine part.
        
        Our first result states that if $d''(c) > 0$ at a certain wave speed $c \in \cinterval$, then $U_c$ is conditionally orbitally \emph{stable}.
        
        \begin{theorem}[Stability]
            \label{abstract stability theorem}
            Suppose that the above assumptions hold.  If $d''(c) > 0$, then the bound state $U_c$ is conditionally orbitally stable in the following sense.  For any $R > 0$ and $\rho > 0$, there exists $\rho_0 > 0$ such that, if $u \colon [0,t_0) \to \mathcal{B}_R^\Wspace$ is a solution of \eqref{abstract Hamiltonian system}, with initial data $u_0 \in \tube_{\rho_0}^\Xspace$, then $u(t) \in \tube_{\rho}^\Xspace$ for all $t \in [0,t_0)$.
        \end{theorem}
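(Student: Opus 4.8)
I propose to follow the energy--momentum scheme of Grillakis--Shatah--Strauss, recast in the conditional, multi-scale setting. Since $d'(c)=-\mom(U_c)$ by \eqref{abstract d' identity} and $d''(c)=-\tfrac{d}{dc}\mom(U_c)$, the hypothesis $d''(c)>0$ makes $b\mapsto\mom(U_b)$ a local $C^1$ diffeomorphism near $c$; let $b=b(p)$ be its inverse near $p=\mom(U_c)$, write $E_b\coloneqq\eng-b\mom$ for the corresponding augmented Hamiltonian, and let $H_b$ be the extension of $D^2E_b(U_b)$ from \Cref{spectral assumptions}. After shrinking $\cinterval$ I may assume $d''>0$ throughout and, by standard perturbation theory for simple isolated eigenvalues, that $\mu_b$, $\inf\Sigma_b$, $\chi_b$, as well as $U_b$ and $H_b$, vary continuously with $b$. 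The argument then rests on three pieces: a spectral coercivity estimate for $H_b$ on a codimension-two subspace, a modulation lemma furnishing tubular coordinates around the orbit, and the conservation of $\eng$ and $\mom$ along solutions; the conditional nature of the conclusion enters only through the interpolation inequality \eqref{abstract interpolation inequality}.

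\emph{Coercivity.} First I would prove that there is $\delta_0>0$, uniform for $b$ near $c$, with $\jb{H_bv,v}\ge\delta_0\norm{v}_{\Xspace}^2$ whenever $(v,T'(0)U_b)_{\Xspace}=0$ and $\jb{\nabla\mom(U_b),v}=0$. Differentiating the stationary equation $DE_b(T(s)U_b)\equiv0$ in $s$ shows $H_bT'(0)U_b=0$, so by the non-degeneracy hypothesis in \Cref{bound state assumption} the kernel of $I^{-1}H_b$ is spanned by $T'(0)U_b$, which is $\Xspace$-orthogonal to the negative eigenvector $\chi_b$. Using \eqref{D^2 E = DP relation}, the momentum constraint is orthogonality in $\Xspace$ to $I^{-1}H_b\tfrac{d}{db}U_b$; decomposing $\tfrac{d}{db}U_b$ along $\chi_b$, the kernel, and the positive spectral subspace, and combining $d''(b)=-(I^{-1}H_b\tfrac{d}{db}U_b,\tfrac{d}{db}U_b)_{\Xspace}>0$ with Cauchy--Schwarz on the positive part, one bounds the $\chi_b$-component of any admissible $v$ by its positive part and obtains the claimed estimate, with the constant controlled by $d''(b)$, $\mu_b$, and $\inf\Sigma_b$ --- hence uniform. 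A short perturbation argument shows the estimate persists, with a smaller constant, if the momentum constraint holds only up to an error that is $o(\norm{v}_{\Xspace})$.

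\emph{Modulation and the well-posedness bound.} Since $\partial_s(T(-s)u-U_b,T'(0)U_b)_{\Xspace}=-\norm{T'(0)U_b}_{\Xspace}^2\ne0$ at $(u,s)=(U_b,0)$, the implicit function theorem yields $\rho_1>0$ and a $C^1$ phase $\sigma_b$ on an $\Xspace$-tube of radius $\rho_1$ about $U_b$ (single-valued modulo the period in the periodic case, globally in the other case of \Cref{bound state assumption}) such that $v\coloneqq T(-\sigma_b(u))u-U_b$ satisfies $(v,T'(0)U_b)_{\Xspace}=0$ and $\norm{v}_{\Xspace}\lesssim\inf_s\norm{u-T(s)U_b}_{\Xspace}$, uniformly for $b$ near $c$. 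The delicate point is that if, in addition, $u\in\mathcal{B}_R^{\Wspace}$, then writing $u=T(-\tau)w$ with $\norm{w}_{\Wspace}<R$ and applying the affine-part bound of \Cref{abstract symmetry assumption} to $T(-\sigma_b(u)-\tau)0=T(-\sigma_b(u))u-dT(-\sigma_b(u)-\tau)w$, one bounds $\norm{v}_{\Wspace}$ by a constant depending only on $R$, $\omega$, and $U_b$. Thus $v$ ranges in a fixed bounded subset of $\Wspace$, so \eqref{abstract interpolation inequality} gives $\norm{v}_{\Vspace}\le C\norm{v}_{\Xspace}^{(2+\theta)/3}$ and \Cref{continuity remark} applies to control all Taylor remainders.

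\emph{Completing the argument.} Given $R,\rho>0$ and a solution $u\colon[0,t_0)\to\mathcal{B}_R^{\Wspace}$ with $u_0\in\tube_{\rho_0}^{\Xspace}$, set $p\coloneqq\mom(u_0)$ and $b_0\coloneqq b(p)$. Conservation gives $\mom(u(t))\equiv p$ and $E_{b_0}(u(t))\equiv E_{b_0}(u_0)$; applying \Cref{continuity remark} after modulating $u_0$ about $U_c$ yields $|p-\mom(U_c)|=O(\rho_0)$, hence $|b_0-c|=O(\rho_0)$ and, since $d$ is $C^1$ with $d(b)=E_b(U_b)$, $E_{b_0}(u_0)-E_{b_0}(U_{b_0})=\eng(u_0)-b_0p-d(b_0)=O(\rho_0)$. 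While $u(t)$ stays in the radius-$\rho_1$ $\Xspace$-tube about $U_{b_0}$, put $v(t)\coloneqq T(-\sigma_{b_0}(u(t)))u(t)-U_{b_0}$; as $E_{b_0}$ is $T$-invariant (by \eqref{abstract energy invariance under T} and \eqref{abstract momentum invariance under T}) and $DE_{b_0}(U_{b_0})=0$, \Cref{continuity remark} gives $E_{b_0}(u(t))-E_{b_0}(U_{b_0})=\tfrac12\jb{H_{b_0}v(t),v(t)}+O(\norm{v(t)}_{\Xspace}^{2+\theta})$, while expanding $\mom$ and using $\mom(U_{b_0}+v(t))=\mom(U_{b_0})$ shows $\jb{\nabla\mom(U_{b_0}),v(t)}=o(\norm{v(t)}_{\Xspace})$. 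Together with $(v(t),T'(0)U_{b_0})_{\Xspace}=0$ from the modulation and the coercivity estimate, this forces $\tfrac{\delta_0}{4}\norm{v(t)}_{\Xspace}^2\le E_{b_0}(u_0)-E_{b_0}(U_{b_0})=O(\rho_0)$ as long as $\norm{v(t)}_{\Xspace}$ is small. A continuation argument --- the set of $t$ with $u(t)$ in that tube is open and contains $0$ (for $\rho_0$ small), and the a priori bound $\norm{v(t)}_{\Xspace}\le C\sqrt{\rho_0}$ rules out an endpoint in $(0,t_0)$ --- propagates the estimate to all of $[0,t_0)$, and then $\inf_s\norm{u(t)-T(s)U_c}_{\Xspace}\le\norm{v(t)}_{\Xspace}+\norm{U_{b_0}-U_c}_{\Xspace}\le C\sqrt{\rho_0}+O(\rho_0)<\rho$ once $\rho_0$ is small enough. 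The main obstacle is exactly this conditional framework: because the symmetry group is affine the orbit need not be bounded in $\Xspace$, so extracting the $\Wspace$-bound on $v$ (which is what makes \eqref{abstract interpolation inequality} and \Cref{continuity remark} usable) takes real work, and the coercivity estimate must be robust against the momentum constraint being satisfied only to order $o(\norm{v}_{\Xspace})$ rather than $O(\norm{v}_{\Xspace}^2)$ when $\theta<1$.
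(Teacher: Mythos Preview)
Your argument is correct and captures all of the essential difficulties --- the coercivity on a codimension-two subspace, the modulation to enforce $(v,T'(0)U_b)_{\Xspace}=0$, the crucial $\Wspace$-bound on the modulated remainder via \Cref{abstract symmetry assumption}\ref{affine bound assumption}, and the use of \eqref{abstract interpolation inequality} to absorb the cubic Taylor remainder into the quadratic coercive term. It differs from the paper's proof in one respect worth noting.

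You modulate in the wave speed: choosing $b_0=b(\mom(u_0))$ so that $\mom(U_{b_0})=\mom(u(t))$ exactly, and then working with the augmented Hamiltonian $E_{b_0}$ and spectrum of $H_{b_0}$. This is clean, but it obliges you to know that $\mu_b$, $\inf\Sigma_b$, and $H_b$ depend continuously on $b$ --- a fact which is standard perturbation theory once one knows $b\mapsto H_b\in\Lin(\Xspace,\Xspace^*)$ is continuous, but which is not part of \Cref{spectral assumptions} as stated. The paper instead keeps $c$ fixed throughout and handles the momentum mismatch by a further orthogonal decomposition $v=\lambda N+w$ with $N\coloneqq I^{-1}\nabla\mom(U_c)$: since $\mom(U_c+v)=\mom(U_c)$ one gets $\lambda=O(\norm{v}_{\Vspace}^2)$, hence $\jb{H_cv,v}=\jb{H_cw,w}+O(\norm{v}_{\Vspace}^3)$, and the coercivity \Cref{lower bound Hc corollary} is applied to $w$ rather than $v$. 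This keeps the whole argument at the single parameter value $c$ and avoids any appeal to continuity of the spectral data. Your ``$o(\norm{v}_{\Xspace})$-robust coercivity'' step plays the same structural role as the paper's $\lambda N$ decomposition, just organised differently. A minor point: your displayed bounds $|p-\mom(U_c)|=O(\rho_0)$ and $E_{b_0}(u_0)-E_{b_0}(U_{b_0})=O(\rho_0)$ should really be $O(\rho_0^{(2+\theta)/3})$ and $O(\rho_0^{2(2+\theta)/3})$, since the continuity of $\mom$ and $\eng$ is in $\Vspace$ and must be pulled back through \eqref{abstract interpolation inequality}; this does not affect the conclusion.
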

        \begin{remark}
            \label{weaker hypotheses remark}
            As will become clear in the next section, the stability theorem holds under weaker hypotheses.  Most notably, we can drop the intersection with $\Dom(T'(0)|_\Wspace)$ in Assumption~\ref{abstract symmetry assumption}\ref{range density}.
        \end{remark}
        
        In order to prove an instability result, we need to know that \eqref{abstract Hamiltonian system} can be solved at least locally around the $U_c$-orbit. If we introduce
        \[
            \tube_\nu^\Wspace \coloneqq \brac*{ u \in \nbhdO \cap \Wspace : \inf_{s \in \R}{\norm{ u - T(s) U_c}_{\Wspace}} < \nu }
        \]
        for $\nu > 0$, we mean the following.
        
        \begin{assumption}[Local existence]
            \label{abstract LWP assumption}
            There exists $\nu_0 > 0$ and $t_0 > 0$ such that for all initial data $u_0 \in \tube_{\nu_0}^\Wspace$, there exists a unique solution to \eqref{abstract Hamiltonian system} on the interval $[0,t_0)$.
        \end{assumption}
        
        With the above hypothesis, we can conclude that if $d''(c) < 0$, then $U_c$ is conditionally orbitally \emph{unstable}.
        
        \begin{theorem}[Instability]
            \label{abstract instability theorem}
            If $d''(c) < 0$ and Assumption~\ref{abstract LWP assumption} is satisfied, then the bound state $U_c$ is orbitally unstable: There exists a $\nu_0 > 0$ such that for every $0 < \nu < \nu_0$, there exists initial data in $\tube_\nu^\Wspace$ whose corresponding solution exits $\tube_{\nu_0}^\Wspace$ in finite time.
        \end{theorem}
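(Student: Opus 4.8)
The argument follows the Grillakis--Shatah--Strauss instability mechanism, reworked to accommodate the state-dependent, merely injective Poisson map and the gap between $\Wspace$ and $\Xspace$. \textbf{Step 1: modulation.} As in the proof of \Cref{abstract stability theorem}, the non-degeneracy $T'(0)U_c \neq 0$ from \Cref{bound state assumption}\ref{bound state non-degeneracy}, together with \Cref{abstract symmetry assumption}, lets us---after shrinking the radius---attach to each $u$ in a $\Wspace$-tube about the $U_c$-orbit a unique modulation parameter $s(u)$, depending as smoothly on $u$ as the ambient space allows, with $v \coloneqq dT(-s(u))u - U_c$ satisfying $(v, T'(0)U_c)_{\Xspace} = 0$. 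Since $\eng$ and $\mom$ are conserved and $D\augHam(U_c) = 0$ by \eqref{abstract stationary equation}, \Cref{continuity remark} yields, for $u$ in the tube,
\[ \augHam(u) - d(c) = \tfrac12\jb{\Hc v, v} + O(\norm{v}_{\Xspace}^{2+\theta}), \qquad \mom(u) - \mom(U_c) = \jb{\nabla\mom(U_c), v} + O(\norm{v}_{\Xspace}^{2}), \]
so the flow near the orbit is governed by the indefinite form $\jb{\Hc\placeholder,\placeholder}$ restricted to the slice $\{\mom = \mom(U_c)\}$.

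\textbf{Step 2: the unstable direction.} By \eqref{D^2 E = DP relation}, $\Hc\dUdc = \nabla\mom(U_c)$, hence $\jb{\nabla\mom(U_c), \dUdc} = \jb{\Hc\dUdc, \dUdc} = -d''(c) > 0$, using \eqref{abstract d'' identity} and $d''(c) < 0$. Together with the spectral picture \eqref{spectrum of Hc}---a simple negative eigenvalue $-\mu_c^2$ with unit eigenvector $\chi_c$, a simple kernel $\linspan\{T'(0)U_c\} \subset \ker\nabla\mom(U_c)$, and $\Sigma_c$ bounded away from $0$---the classical Jacobi/index-counting argument produces a vector $y_c \in \ker\nabla\mom(U_c)$ with $\jb{\Hc y_c, y_c} < 0$, along with the coercivity estimate: there is $\kappa > 0$ such that $\jb{\Hc w, w} \geq \kappa\norm{w}_{\Xspace}^2$ for all $w$ in the closed subspace of $\Xspace$ annihilated by $\nabla\mom(U_c)$ and orthogonal to $y_c$ and to $T'(0)U_c$. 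In particular $U_c$ is a saddle of $\eng$ restricted to $\{\mom = \mom(U_c)\}$, so there exist $u_0$ arbitrarily close to $U_c$ in $\Wspace$ with $\mom(u_0) = \mom(U_c)$ and $\augHam(u_0) < d(c)$ (a small regularity argument is needed to keep $u_0$ in $\Wspace$). Fix such a $u_0$ in the $\Wspace$-tube; after possibly shrinking $\nu_0$, \Cref{abstract LWP assumption} supplies a solution $u$ on $[0,t_0)$ with $\mom(u(t)) \equiv \mom(U_c)$ and $\augHam(u(t)) \equiv \augHam(u_0) < d(c)$ for as long as $u(t)$ stays in the tube.

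\textbf{Step 3: the Lyapunov functional and escape.} Promote $y_c$ to an (essentially equivariant) vector field $Y$ near the orbit, with $Y(U_c) = y_c$ and $Y$ tangent to the level sets of $\mom$, i.e.\ $\jb{\nabla\mom(u), Y(u)} = 0$. The point at which the hypotheses of this paper enter decisively---replacing the invertibility of $J$ used by GSS---is that, by injectivity of $\hat J$, bijectivity of $B(u)$ (\Cref{abstract symplectic assumption}\ref{J injectivity assumption},\ref{B bijective assumption}), and the density hypotheses \Cref{abstract symmetry assumption}\ref{T'(0) assumption},\ref{range density}, one can arrange $Y(u) \in \Rng J(u)$ and define a conjugate covector $\zeta(u) \in \Dom(\hat{J})$ by $J(u)\zeta(u) = Y(u)$; the $C^1$-regularity in \Cref{abstract symplectic assumption}\ref{regularity of J} makes $u \mapsto \zeta(u)$ suitably continuous. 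Set $\mathcal A(u) \coloneqq \jb{\zeta(u), v}$, which the equivariance of $\zeta$ renders bounded on the $\Wspace$-tube (the affine part of $T$ requiring some care). Along a solution, \eqref{weak abstract Hamiltonian system}, the skew-adjointness of $J(u)$, the splitting $\nabla\eng = \nabla\augHam + c\nabla\mom$, and the $\mom$-tangency of $Y$ give
\[ \frac{d}{dt}\mathcal A(u(t)) = -\jb{\nabla\augHam(u(t)), Y(u(t))} + O(\norm{v(t)}_{\Xspace}^{2+\theta}), \]
and a standard estimate---using the Step~1 expansions, the Step~2 coercivity, and the \emph{fixed} energy deficit $d(c) - \augHam(u_0) > 0$---bounds the right-hand side below by a constant $\delta > 0$ for as long as $u(t)$ stays in the tube. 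Since $\abs{\mathcal A} \leq M$ on $\tube_{\nu_0}^\Wspace$ for some $M$, the inequality $\tfrac{d}{dt}\mathcal A(u(t)) \geq \delta$ forces $u(t)$ to leave $\tube_{\nu_0}^\Wspace$ before time $2M/\delta$; a continuation argument built on \Cref{abstract LWP assumption} ensures the solution genuinely exists up to the exit time, proving the theorem.

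\textbf{Main obstacle.} The crux is the construction of the conjugate covector $\zeta(u)$ (and the attendant well-definedness and boundedness of $\mathcal A$): with $J$ merely injective with dense range---so that $\hat J^{-1}$ need not even be bounded---and state-dependent through $B(u)$, one cannot simply write $\zeta = J^{-1}Y$, but must keep $Y(u)$ inside $\Rng J(u)$ along the whole orbit while controlling its $u$-dependence. This is precisely what the density assumptions on $\hat J$ and the $C^1$-regularity of $B$ are there to supply, and it is the principal departure from GSS. A close second is the bookkeeping imposed by the three-scale structure: every functional entering $\tfrac{d}{dt}\mathcal A$ must be differentiable along merely $\Wspace$-regular solutions, so the interpolation inequality \eqref{abstract interpolation inequality} and the $O(\norm{\placeholder}_{\Xspace}^{2+\theta})$ remainders of \Cref{continuity remark} are indispensable for absorbing all error terms into the coercive lower bound---and these are ultimately why instability, like stability, can only be obtained conditionally.
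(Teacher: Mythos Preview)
Your proposal captures the broad GSS strategy, but Step~3 contains a genuine gap at precisely the point you flag as the ``main obstacle.'' You propose to produce a covector field $\zeta(u)$ satisfying $J(u)\zeta(u) = Y(u)$ along the whole tube, appealing to injectivity of $\hat J$, bijectivity of $B(u)$, and density. But density of $\Rng\hat J$ and continuity of $B$ do \emph{not} give you this: even if $Y(U_c) \in \Rng J(U_c)$, there is no mechanism keeping $B(u)^{-1}Y(u)$ inside the fixed subspace $\Rng\hat J$ as $u$ varies, and even when it does lie there, $\hat J^{-1}$ is unbounded, so $u \mapsto \zeta(u)$ need not be continuous---let alone $C^1$, which you need to differentiate $\mathcal A$ along solutions. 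The paper avoids this inversion entirely. It first uses density (Lemma~\ref{existence of z lemma}) to replace the abstract negative direction $y_c$ by a nearby $z$ that is \emph{simultaneously} in $\Dom(T'(0)|_\Wspace)$ and of the form $z = J(U_c)IZ$ for a \emph{fixed} $Z$. It then defines the Lyapunov functional directly as $A(u) = -(Z,\, M(u) - U_c)_\Xspace$ and checks by hand (Lemma~\ref{existence of A lemma}) that $DA(u) \in \Dom(\hat J)$ for every $u$---this follows because $DA(u)$ is built from $IdT(-\tilde s(u))Z$ and $D\tilde s(u)$, both of which land in $\Dom(\hat J)$ by the commutation identities and the bound-state regularity \eqref{second technical bound state assumption}. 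No inversion of $J(u)$ is ever attempted; the point is to write down $A$ so that $DA$ automatically stays in the domain.

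A second, related gap is your lower bound on $\tfrac{d}{dt}\mathcal A$. Asserting that coercivity plus the fixed deficit $d(c) - \augHam(u_0)$ bounds $-\jb{\nabla\augHam(u), Y(u)}$ below by $\delta > 0$ is not ``standard'': the quantity $\jb{\nabla\augHam(u), Y(u)}$ is first-order in $v$ while the deficit is second-order, and your $O(\norm{v}_\Xspace^{2+\theta})$ remainder does not obviously help. The paper handles this with an additional layer of machinery you omit: it introduces the auxiliary flow $\Phi$ of the ODE $\tfrac{du}{d\lambda} = -J(u)DA(u)$ (well-posed in $\Wspace$ precisely because of the $C^1$-regularity established above), shows it preserves $\mom$, constructs a functional $\Lambda$ for which $\eng(\Phi(\Lambda(v),v)) \geq \eng(U_c)$, and then deduces the clean inequality $\eng(U_c) \leq \eng(v) + \Lambda(v)\mathcal S(v)$ with $|\Lambda| \leq 1$ (Lemmas~\ref{existence of Lambda lemma}--\ref{energy inequality lemma}). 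This gives $|\mathcal S(u(t))| \geq \eng(U_c) - \eng(u_0) > 0$ directly, without any error-absorption. The initial data themselves are produced by the same flow (Lemma~\ref{initial data curve lemma}), guaranteeing they lie in $\Wspace \cap \Pmanifold_c$---the ``small regularity argument'' you allude to is not small.
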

        
        If $\Xspace = \Wspace$, we also obtain a more conventional stability result as a corollary of Theorem~\ref{abstract stability theorem}.
        
        \begin{corollary}[Stability when $\Xspace = \Wspace$]
            \label{X = W stability corollary} 
            If $d''(c) > 0$, Assumption~\ref{abstract LWP assumption} holds, and $\Xspace = \Wspace$, then the bound state $U_c$ is orbitally stable: For any $\nu > 0$, there exists $\nu_0 > 0$ such that the solution for any initial data $u_0 \in \tube_{\nu_0}^\Wspace$ exists globally, and stays in $\tube_\nu^\Wspace$.
        \end{corollary}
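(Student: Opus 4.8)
The plan is to deduce the corollary from Theorem~\ref{abstract stability theorem} and Assumption~\ref{abstract LWP assumption} by a standard continuation argument, the crucial point being that when $\Xspace=\Wspace$ the tubular neighborhoods $\tube_\rho^\Xspace$ and $\tube_\rho^\Wspace$ coincide for every $\rho$. Thus the conclusion of Theorem~\ref{abstract stability theorem}, which a priori only controls the $\Xspace$-distance to the orbit along the set $\mathcal{B}_R^\Wspace$, now simultaneously controls the $\Wspace$-distance, which is exactly what is needed to keep reapplying the local existence theorem. Concretely, fix $\nu>0$; since $\tube_\nu^\Wspace$ shrinks as $\nu$ decreases it suffices to treat $\nu<\nu_*$, where $\nu_*$ denotes the radius supplied by Assumption~\ref{abstract LWP assumption} (for larger $\nu$ one simply reuses the $\nu_0$ that works for $\nu_*/2$). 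First I would choose $R$ so large that $\overline{\tube_\nu^\Wspace}\subset\mathcal{B}_R^\Wspace$: for $u\in\tube_\nu^\Wspace$ pick $s\in\R$ with $\norm{u-T(s)U_c}_\Wspace<\nu$; since $T(-s)$ is affine with $\Wspace$-isometric linear part by Assumption~\ref{abstract symmetry assumption}\ref{unitary assumption}, we get $\norm{T(-s)u}_\Wspace\leq\norm{u-T(s)U_c}_\Wspace+\norm{U_c}_\Wspace<\nu+\norm{U_c}_\Wspace$, so $\inf_{r\in\R}\norm{T(r)u}_\Wspace<\nu+\norm{U_c}_\Wspace$; by continuity of the distance function the same bound, with $\leq\nu$, holds on $\overline{\tube_\nu^\Wspace}$, so that $R:=\nu+\norm{U_c}_\Wspace+1$ works. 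Applying Theorem~\ref{abstract stability theorem} with this $R$ and with $\rho:=\nu$ yields a radius $\rho_0>0$, and I set $\nu_0:=\min\{\rho_0,\nu\}$, which is $<\nu_*$.

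Now take $u_0\in\tube_{\nu_0}^\Wspace$. Patching the local solutions of Assumption~\ref{abstract LWP assumption} together via uniqueness produces a maximal solution $u$ on some interval $[0,T_{\max})$, and because the local existence time $t_0$ there is uniform over $\tube_{\nu_*}^\Wspace$, a maximal solution that remains in $\tube_{\nu_*}^\Wspace$ near its upper endpoint must in fact be global. I would then run the first-exit-time argument: set $\tau:=\sup\{t\in[0,T_{\max}):u(s)\in\tube_\nu^\Wspace\text{ for all }s\in[0,t]\}$, which is positive because $u(0)=u_0\in\tube_{\nu_0}^\Wspace\subset\tube_\nu^\Wspace$ and $u$ is $\Wspace$-continuous. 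Suppose $\tau<T_{\max}$. Then $u([0,\tau])\subset\overline{\tube_\nu^\Wspace}\subset\mathcal{B}_R^\Wspace$, and since $\mathcal{B}_R^\Wspace$ is open and $u$ is continuous, $u$ maps a slightly larger interval $[0,\tau+\varepsilon)\subset[0,T_{\max})$ into $\mathcal{B}_R^\Wspace$. As $u_0\in\tube_{\nu_0}^\Wspace=\tube_{\nu_0}^\Xspace\subset\tube_{\rho_0}^\Xspace$, Theorem~\ref{abstract stability theorem} gives $u(t)\in\tube_\rho^\Xspace=\tube_\nu^\Wspace$ for every $t\in[0,\tau+\varepsilon)$; in particular $u(s)\in\tube_\nu^\Wspace$ for all $s\leq\tau+\tfrac{\varepsilon}{2}$, contradicting the definition of $\tau$. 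Hence $\tau=T_{\max}$, so $u$ stays in $\tube_\nu^\Wspace$ on all of $[0,T_{\max})$; in particular it never leaves $\tube_{\nu_*}^\Wspace$, forcing $T_{\max}=\infty$. This is exactly the asserted global existence and orbital stability.

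Everything here is soft once Theorem~\ref{abstract stability theorem} is available, and I expect the only delicate point — the main, if minor, obstacle — to be closing the bootstrap cleanly: one must check that the \emph{conditional} hypothesis ``$u$ maps into $\mathcal{B}_R^\Wspace$'' can genuinely be promoted to an a priori fact (which is precisely what $\Xspace=\Wspace$ and the $\Wspace$-isometry of $dT(s)$ accomplish), and that the continuation mechanism of Assumption~\ref{abstract LWP assumption}, with its uniform existence time and its uniqueness statement, can indeed be iterated as long as the solution remains in the marginally enlarged tube $\tube_{\nu_*}^\Wspace$.
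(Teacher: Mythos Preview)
Your argument is correct. The paper does not supply a separate proof of this corollary, treating it as an immediate consequence of Theorem~\ref{abstract stability theorem} once $\Xspace=\Wspace$ collapses the tubular neighborhoods $\tube_\rho^\Xspace$ and $\tube_\rho^\Wspace$; your continuation/first-exit-time argument is precisely the standard way to make this rigorous, and all the small verifications (the $\Wspace$-isometry of $dT(-s)$ giving $\overline{\tube_\nu^\Wspace}\subset\mathcal{B}_R^\Wspace$, openness of $\mathcal{B}_R^\Wspace$, and iteration of the uniform local existence time) go through as you indicate.
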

        
        Together, Theorem~\ref{abstract instability theorem} and Corollary \ref{X = W stability corollary} essentially recover the classical GSS theory in the special case that $\Xspace = \Wspace$, $J$ is a state-independent isomorphism, $T(s)$ is linear, and the Hamiltonian system \eqref{abstract Hamiltonian system} is globally well-posed.  The only exception is that, in the interest of brevity, we have not addressed the situation where $d''(c) = 0$.  
        
        Lastly, let us comment on how the above results relate to the recent monumental paper of Lin and Zeng \cite{lin2017instability}, which studies the dynamics of linear Hamiltonian systems under weaker assumptions on the Poisson map than ours (for instance, they allow an infinite-dimensional kernel).   While this theory concerns the linear case, under some conditions it can be applied to construct invariant manifolds for nonlinear systems as well; see the work of Jin, Lin, and Zeng \cite{jin2018dynamics,jin2018invariant}.  When this can be accomplished, it gives considerably more information than the conditional orbital stability/instability we obtain from Theorem~\ref{abstract stability theorem} or Theorem~\ref{abstract instability theorem}.  However, the methodology has difficulty attacking equations for which the solution map incurs a loss of derivatives, such as quasilinear problems.  To overcome this, one needs the linear evolution to display sufficiently strong smoothing properties, which limits somewhat its applicability. By contrast, the framework we present here is adapted to the quasilinear setting by design, and does not rely on linear estimates.  

\section{Stability in the general setting}
    \label{abstract stability section}

    The purpose of this section is to prove Theorem~\ref{abstract stability theorem} on the conditional orbital stability of the bound state $U_c$, under the hypothesis that $d''(c) > 0$.  Our basic approach follows the ideas of Grillakis, Shatah, and Strauss, but many adaptations are required due to the more complicated functional analytic setting. Interestingly, the state dependence of $J$ is less of an issue for this argument than one may expect.
    
    We begin with a technical lemma which states that, in a sufficiently small tubular neighborhood $\tube_\rho^\Xspace$ of $U_c$, one can find a parameter value $s$ (depending on $u$) such that the distance between $T(s)u$ and $U_c$ in the energy norm is minimized.
    
    \begin{lemma}
        \label{mod out T lemma}
        If $s \mapsto T(s) U_c$ is not periodic, then then exists a $\rho > 0$ and a function $\tilde{s} \in C^2(\tube_{\rho}^\Xspace ; \R)$ such that, for all $u \in \tube_{\rho}^\Xspace$ the following holds.  
        \begin{enumerate}[label={\rm(\alph*)}]
            \item \label{tilde s minimizer}
                $ \norm{ T(\tilde{s}(u))u - U_c }_{\Xspace} \leq \norm{ T(r) u - U_c }_{\Xspace}$, for all $r \in \R$. 
            \item \label{tilde s orthogonal}
                $( T(\tilde{s}(u)) u - U_c, T'(0) U_c)_{\Xspace} = 0$.
            \item \label{composition tilde s T}
                $\tilde{s}(T(r)u) = \tilde{s}(u) - r$ for all $r \in \R$.
            \item \label{s tilde prime formula}
                For all $u \in \tube_\rho^\Xspace$ and $v \in \Xspace$, 
                \begin{align*}
                    \jb{D\tilde{s}(u),v} &= -\frac{\jb{\sigma_1(u),v}}{r_1(u)},\\
                    \jb{D^2\tilde{s}(u)v,v} &= -\frac{r_2(u)\jb{\sigma_1(u),v}^2}{r_1(u)^3} - 2 \frac{\jb{\sigma_1(u),v}\jb{\sigma_2(u),v}}{r_1(u)^2},
                \end{align*}
                where
                \begin{align*}
                    \sigma_k(u) &\coloneqq IT^{(k)}(-\tilde{s}(u))U_c, \qquad k = 1,2,\\
                    r_1(u) &\coloneqq \norm{T'(0)U_c}_{\Xspace}^2 - (T(\tilde{s}(u))u-U_c,T''(0)U_c)_{\Xspace},\\
                    r_2(u) &\coloneqq (T(\tilde{s}(u))u-U_c,T'''(0)U_c).
                \end{align*}
            \item \label{image tube under tilde s prime}
                We have $D\tilde{s}(u) \in \Dom(\hat{J})$ for every $u \in \tube_{\rho}^\Xspace$, and the map $g \colon \tube_{\rho}^\Xspace \cap \Wspace \to \Wspace$ defined by $g(u) \coloneqq J(u)D\tilde{s}(u)$ is of class $C^1(\tube_{\rho}^\Xspace \cap \Wspace ; \Wspace)$. 
        \end{enumerate}
        If instead $s \mapsto T(s) U_c$ has minimal period $\ell$, then the same result is true except $\tilde s \in C^2(\tube_\rho^\Xspace; \R/(\ell\R))$, and the equality in part~\ref{composition tilde s T} holds modulo $\ell$.
    \end{lemma}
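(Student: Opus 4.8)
\emph{Proof strategy.} This is the standard phase‑adjustment (modulation) lemma, and the plan is to apply the implicit function theorem to
\[
    \Lambda(s,u) \coloneqq (T(s)u - U_c,\, T'(0)U_c)_\Xspace,
\]
defined for $u$ in a small $\Xspace$‑ball about $U_c$, whose vanishing in $s$ is the Euler--Lagrange equation for the minimization in part~\ref{tilde s minimizer}. The delicate point at this stage is that $s\mapsto T(s)u$ is merely continuous for a general $u\in\Xspace$, so the $s$‑regularity of $\Lambda$ is not apparent. To fix this I would use the affine splitting $T(s)u = dT(s)u + T(s)0$ together with the unitarity of $dT(s)$ on $\Xspace$ (so that $(dT(s)u,w)_\Xspace = (u, dT(-s)w)_\Xspace$) to rewrite
\[
    \Lambda(s,u) = (u,\, dT(-s)\,T'(0)U_c)_\Xspace + (T(s)0 - U_c,\, T'(0)U_c)_\Xspace .
\]
In this form $\Lambda$ is linear, hence smooth, in $u$, while its dependence on $s$ enters only through $T(s)0\in C^3(\R;\Wspace)$ (Assumption~\ref{abstract symmetry assumption}\ref{affine bound assumption}) and through $dT(-s)T'(0)U_c$, which is $C^2$ because $U_c\in\Dom(T'''(0))$ forces $T'(0)U_c\in\Dom(T''(0))$ (using \eqref{technical bound state assumption} and the identification of domains in Assumption~\ref{abstract symmetry assumption}\ref{T'(0) assumption}). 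Hence $\Lambda$ is jointly $C^2$.

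Next I would run the implicit function theorem at $(s,u)=(0,U_c)$. Clearly $\Lambda(0,U_c)=0$, and a short computation using the skew‑adjointness of $dT'(0)$ on $\Xspace$ (Remark~\ref{symmetry group remark}) gives $\partial_s\Lambda(0,U_c) = \norm{T'(0)U_c}_\Xspace^2$, which is positive by the non‑degeneracy condition, Assumption~\ref{bound state assumption}\ref{bound state non-degeneracy}. The IFT then produces $\delta>0$ and a $C^2$ map $\tilde s$ on the $\Xspace$‑ball of radius $\delta$ about $U_c$ with $\tilde s(U_c)=0$, $\Lambda(\tilde s(u),u)=0$, and $\tilde s(u)$ the unique zero of $\Lambda(\placeholder,u)$ near $0$. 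To pass to the tubular neighborhood I would use the equivariance $\Lambda(s+r,u)=\Lambda(s,T(r)u)$ (immediate from the flow property, Assumption~\ref{abstract symmetry assumption}\ref{group flow property}): for $u\in\tube_\rho^\Xspace$ with $\rho<\delta$ pick $r$ with $\norm{T(-r)u-U_c}_\Xspace<\rho$ and set $\tilde s(u)\coloneqq \tilde s(T(-r)u)-r$, which is well defined and $C^2$ by uniqueness. Part~\ref{composition tilde s T} is then immediate, part~\ref{tilde s orthogonal} is just $\Lambda(\tilde s(u),u)=0$, and for the global minimality in part~\ref{tilde s minimizer} I would use $\norm{T(r)u-U_c}_\Xspace\ge\norm{T(r)U_c-U_c}_\Xspace-\norm{u-U_c}_\Xspace$ (unitarity), so that Assumption~\ref{bound state assumption}\ref{non-periodic bound state assumption} forces $\liminf_{\abs{r}\to\infty}\norm{T(r)u-U_c}_\Xspace$ to exceed $\inf_r\norm{T(r)u-U_c}_\Xspace$ once $\rho$ is small; the infimum is then attained at a critical point of $r\mapsto\norm{T(r)u-U_c}_\Xspace^2$, which — since $\Lambda$ is the $s$‑derivative of $\tfrac12\norm{T(\placeholder)u-U_c}_\Xspace^2$ (again by skew‑adjointness) and has positive $s$‑derivative $r_1(u)$ there — must be $\tilde s(u)$. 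The periodic case is the same with $\R$ replaced by $\R/(\ell\R)$.

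The derivative formulas in part~\ref{s tilde prime formula} follow by differentiating $\Lambda(\tilde s(u),u)=0$ in $u$ once and twice. Since $\Lambda$ is affine in $u$ with $\partial_u\Lambda(s,u)v=(dT(s)v,T'(0)U_c)_\Xspace=(v,dT(-s)T'(0)U_c)_\Xspace=\jb{IT^{(1)}(-s)U_c,v}$, one has $\partial_u\Lambda(\tilde s(u),u)=\sigma_1(u)$; similarly (using skew‑adjointness of $dT'(0)$ and $U_c\in\Dom(T'''(0))$) one identifies $\partial_s\Lambda(\tilde s(u),u)=r_1(u)$, $\partial_{su}\Lambda(\tilde s(u),u)=-\sigma_2(u)$, and $\partial_{ss}\Lambda(\tilde s(u),u)=r_2(u)$. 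The implicit‑function formulas $\jb{D\tilde s(u),v}=-\jb{\sigma_1(u),v}/r_1(u)$ and the stated second‑order expression then fall out; here $r_1(u)\neq 0$ on $\tube_\rho^\Xspace$ after shrinking $\rho$, since $r_1$ is continuous with $r_1(U_c)=\norm{T'(0)U_c}_\Xspace^2>0$.

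The remaining part~\ref{image tube under tilde s prime} is where the structure of $J=B(u)\hat J$, the space $\Wspace$, and the affine symmetry genuinely enter, and I expect it to be the main obstacle. From the formula, $D\tilde s(u)=-r_1(u)^{-1}\,I\,dT(-\tilde s(u))\,T'(0)U_c$. Now \eqref{technical bound state assumption} says $IT'(0)U_c\in\Dom(\hat J)$, i.e.\ $T'(0)U_c\in I^{-1}\Dom(\hat J)$; since $I^{-1}\Dom(\hat J)$ is invariant under the linear symmetry group (Assumption~\ref{abstract symmetry assumption}\ref{invariances}), also $dT(-\tilde s(u))T'(0)U_c\in I^{-1}\Dom(\hat J)$, so $D\tilde s(u)\in\Dom(\hat J)$. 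Applying the commutation identity $\hat J I\,dT(s)=dT(s)\hat J I$ (Assumption~\ref{abstract symmetry assumption}\ref{commutativity assumption}),
\[
    g(u) = J(u)D\tilde s(u) = B(u)\hat J D\tilde s(u) = -\frac{1}{r_1(u)}\, B(u)\, dT(-\tilde s(u))\, w_c, \qquad w_c \coloneqq \hat J I\, T'(0)U_c .
\]
By \eqref{second technical bound state assumption}, $w_c\in\Dom(T'(0)|_\Wspace)$, so $s\mapsto dT(s)w_c$ is $C^1(\R;\Wspace)$; the scalar maps $u\mapsto\tilde s(u)$ and $u\mapsto r_1(u)$ are $C^1$ on $\tube_\rho^\Xspace\cap\Wspace$ with $r_1$ bounded away from $0$; and $B(\placeholder)\in C^1(\nbhdO\cap\Wspace;\Lin(\Wspace))$ by Assumption~\ref{abstract symplectic assumption}\ref{regularity of J}. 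Composing these gives $g\in C^1(\tube_\rho^\Xspace\cap\Wspace;\Wspace)$. The bulk of the proof is standard GSS modulation bookkeeping; the genuine work is in this last part — getting $D\tilde s(u)$ into $\Dom(\hat J)$ despite the non‑surjectivity of $\hat J$ (which is exactly what the domain hypotheses of Assumption~\ref{bound state assumption}\ref{bound state domain assumption} are designed to supply) and carefully tracking which objects live in $\Wspace$ rather than merely $\Xspace$ so as to extract the $\Wspace$‑valued $C^1$ regularity of $g$ — together with the regularity rewriting of $\Lambda$ and the non‑periodicity argument behind the global minimality.
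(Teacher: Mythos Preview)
Your proposal is correct and follows essentially the same approach as the paper: define the distance function (or, equivalently, its $s$-derivative $\Lambda$), apply the implicit function theorem at $(U_c,0)$ using $\partial_s\Lambda(0,U_c)=\norm{T'(0)U_c}_\Xspace^2>0$, extend from a ball to the tube via the equivariance $\Lambda(s+r,u)=\Lambda(s,T(r)u)$, and for part~\ref{image tube under tilde s prime} use exactly the commutation $\hat J I\,dT(s)=dT(s)\hat J I$ together with $\hat J I\,T'(0)U_c\in\Dom(T'(0)|_\Wspace)$ and $B\in C^1(\nbhdO\cap\Wspace;\Lin(\Wspace))$. Your treatment of the $s$-regularity of $\Lambda$ (moving the $s$-dependence onto $T^{(k)}(-s)U_c$ and $T(s)0$) is in fact more explicit than the paper's, which achieves the same end by writing $h(u,s)=\tfrac12\norm{u-T(-s)U_c}_\Xspace^2$; the only place you are slightly looser than the paper is the global-minimality step in part~\ref{tilde s minimizer}, where the paper bounds any competitor $r$ into the IFT-uniqueness window $(-s_0,s_0)$ via $\norm{T(r)U_c-U_c}_\Xspace\le 2\norm{u-U_c}_\Xspace<\eta$, rather than arguing through positivity of $r_1$ at the minimizer.
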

    \begin{proof}
        For $s \in \R$ and $u \in \Xspace$, set 
        \[
            h(u,s) \coloneqq \frac{1}{2}\norm{ T(s) u - U_c }_{\Xspace}^2 = \frac{1}{2}\norm{u-T(-s)U_c}_{\Xspace}^2.
        \]
        Then
        \begin{align*}
            \partial_s h(u,s) &= (T(s)u-U_c,T'(0)U_c)_\Xspace,\\
            \partial_s^2 h(u,s) &=\norm{T'(0)U_c}_{\Xspace}^2 - (T(s)u-U_c,T''(0)U_c)_\Xspace.
        \end{align*}
        Clearly $\partial_s h(U_c,0) = 0$ and $\partial_s^2 h(U_c,0) = \norm{T'(0)U_c}_{\Xspace}^2 > 0$. The implicit function theorem then ensures the existence of a ball $B_\delta \subset \Xspace$ centered at $U_c$, an interval $(-s_0, s_0)$, and a $C^2$ map $\tilde s\colon B_\delta \to (-s_0, s_0)$ such that the equation $\partial_s h(u,s)=0$ has a unique solution $s=\tilde s(u)\in (-s_0, s_0)$ for all $u\in B_\delta$. Thus $s = \tilde s(u)$ uniquely minimizes $h(u,\placeholder)$ on $(-s_0,s_0)$ for any fixed $u \in B_\delta$.
        
        We will only present the argument for the non-periodic orbits as the proof for the periodic case requires only a simple modification.  Assumption~\ref{bound state assumption}\ref{non-periodic bound state assumption} then guarantees that there exists an $\eta > 0$ such that 
        \[
            \inf_{s \geq s_0}{\norm{T(s)U_c -U_c}_\Xspace} \geq \eta.
        \]
        Let $\rho \coloneqq \min(\eta/3,\delta)$. Then, if $u \in B_\rho$ and $r \in \R$ are such that $\norm{ T(r) u - U_c }_{\Xspace} \leq \norm{T(\tilde s(u)) u - U_c }$, we have 
        \begin{align*}
            \norm{T(r)U_c-U_c}_\Xspace &= \norm{dT(r)(U_c-u) + T(r)u-U_c}_{\Xspace} \leq \norm{U_c - u}_{\Xspace} + \norm{T(\tilde{s}(u))u-U_c}_\Xspace\\
            &\leq 2\norm{u - U_c}_{\Xspace} < \eta,
        \end{align*}
        which implies that $r \in (-s_0, s_0)$ and hence $r = \tilde s(u)$ by uniqueness.  This completes the proof of parts~\ref{tilde s minimizer} and \ref{tilde s orthogonal} for $u \in B_\rho$.
        
        For part~\ref{composition tilde s T}, note that if both $u$ and $T(r)u$ lie in $B_\rho$, then
        \[
            \norm{T(\tilde{s}(u)-r)T(r)u - U_c}_\Xspace = \norm{T(\tilde{s}(u))u - U_c}_\Xspace \leq \norm{T(t)u - U_c}_{\Xspace}
        \]
        for all $t \in \R$. In particular, if we choose $t = \tilde{s}(T(r)u)+r$, we obtain part~\ref{composition tilde s T} on $B_\rho$ by uniqueness. Moreover, as a consequence, we can proceed to extend $\tilde{s}$ to all of $\tube_\rho^\Xspace$ through
        \[
            \tilde{s}(u) = \tilde{s}(T(r)u) + r,
        \]
        where $r$ is such that $T(r)u \in B_\rho$. This is well defined, since if both $T(r)u$ and $T(s)u$ lie in $B_\rho$, then
        \[
            \tilde{s}(T(s)u) = \tilde{s}(T(s-r)T(r)u) = \tilde{s}(T(r)) - (s-r)
        \]
        by part~\ref{composition tilde s T} on $B_\rho$.
        
        The identities in part~\ref{s tilde prime formula} follow by straightforward calculations. Finally, for any $u \in \tube_{\rho}^\Xspace $, we have $\sigma_1(u) \in \Dom(\hat{J})$ by Assumption~\ref{bound state assumption}\ref{bound state domain assumption} and \eqref{abstract commutation identity}, and since moreover
        \[
            J(u)\sigma_1(u) = B(u)dT(-\tilde{s}(u))\hat{J}IT'(0)U_c
        \]
        by \eqref{abstract commutation identity}, part~\ref{image tube under tilde s prime} follows from \eqref{second technical bound state assumption} and \Cref{abstract symplectic assumption}\ref{regularity of J}.
    \end{proof}
    
    By Assumption~\ref{spectral assumptions}\ref{spectrum config assumption}, we know that $\Xspace$ admits the spectral decomposition 
    \[
        \Xspace = \Xspace_- \oplus \Xspace_0 \oplus \Xspace_+,
    \]
    where $\Xspace_- \coloneqq \linspan{\{\chi_c\}}$, $\Xspace_0 = \linspan{\{T^\prime(0) U_c\}}$, and $\Xspace_+$ is the positive subspace of $I^{-1}\Hc$.  Here we are using the fact that $T^\prime(0) U_c$ is a generator for the kernel of $I^{-1} \Hc$.  Observe that the restriction of $I^{-1} \Hc$ to $\Xspace_+$ is a positive operator, in the sense that there exists an $\alpha = \alpha(c) > 0$ such that 
    \begin{equation}
        \label{Hc positivity}
        \jb*{ \Hc v, v } \geq \alpha \norm{ v}_{\Xspace}^2 \qquad \text{for all } v \in \Xspace_+.
    \end{equation}
    
    The following lemma describes a version of this inequality which holds also outside $\Xspace_+$.
    \begin{lemma}
        \label{lower bound Hc lemma}
        Suppose that $y \in \Xspace$ is such that $\jb{\Hc y,y} < 0$.  Then there exists a constant $\tilde{\alpha} > 0$ such that
        \begin{equation}
            \label{lower bound Hc}
            \jb{\Hc v,v} \geq \tilde{\alpha} \norm{v}_{\Xspace}^2
        \end{equation}
        for every $v \in \Xspace$ satisfying
        \begin{equation}
            \label{lower bound Hc v assumption}
            \jb{\Hc y,v} = 0 \qquad \text{and} \qquad (T'(0)U_c,v)_\Xspace = 0.
        \end{equation}
    \end{lemma}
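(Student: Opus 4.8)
\emph{Proof proposal.} The plan is to run the standard Grillakis--Shatah--Strauss coercivity argument, adapted to the present notation, using the orthogonal spectral decomposition $\Xspace = \Xspace_- \oplus \Xspace_0 \oplus \Xspace_+$ from Assumption~\ref{spectral assumptions}\ref{spectrum config assumption}; here orthogonality of the three summands in $\Xspace$ is exactly what self-adjointness of $I^{-1}\Hc$ buys us. I would write $v = a\chi_c + b\,T'(0)U_c + p$ with $p \in \Xspace_+$, and fix once and for all the decomposition $y = a_y\chi_c + b_y\,T'(0)U_c + q$ with $q \in \Xspace_+$. Since $\Hc$ annihilates $\Xspace_0$, and $\langle \Hc\chi_c, \placeholder\rangle = -\mu_c^2(\chi_c,\placeholder)_\Xspace$ vanishes on $\Xspace_0 \oplus \Xspace_+$ while $\langle \Hc q, \chi_c\rangle = 0$ for $q\in\Xspace_+$, one obtains the Pythagorean identities $\jb{\Hc v,v} = -\mu_c^2 a^2 + \jb{\Hc p, p}$ and $\jb{\Hc y,y} = -\mu_c^2 a_y^2 + \jb{\Hc q, q}$, together with $\jb{\Hc y, v} = -\mu_c^2 a_y a + \jb{\Hc q, p}$.

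With these in hand the argument is bookkeeping. First, the constraint $(T'(0)U_c, v)_\Xspace = 0$ forces $b\norm{T'(0)U_c}_\Xspace^2 = 0$, hence $b = 0$ by the non-degeneracy condition in Assumption~\ref{bound state assumption}\ref{bound state non-degeneracy}. Second, since $\jb{\Hc q, q} \geq 0$ by \eqref{Hc positivity}, the hypothesis $\jb{\Hc y, y} < 0$ forces $a_y \neq 0$ and
\[
    \beta \coloneqq \frac{\jb{\Hc q, q}}{\mu_c^2 a_y^2} \in [0,1).
\]
Third, the constraint $\jb{\Hc y, v} = 0$ gives $a = \jb{\Hc q, p}/(\mu_c^2 a_y)$; applying the Cauchy--Schwarz inequality to the nonnegative symmetric form $\jb{\Hc\placeholder,\placeholder}$ on $\Xspace_+$ (nonnegativity again from \eqref{Hc positivity}, symmetry from self-adjointness of $I^{-1}\Hc$) yields $\jb{\Hc q, p}^2 \leq \jb{\Hc q, q}\jb{\Hc p, p}$, whence $\mu_c^2 a^2 \leq \beta\,\jb{\Hc p, p}$. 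Substituting into the first Pythagorean identity,
\[
    \jb{\Hc v, v} = -\mu_c^2 a^2 + \jb{\Hc p, p} \geq (1-\beta)\jb{\Hc p, p} \geq (1-\beta)\,\alpha\norm{p}_\Xspace^2,
\]
where $\alpha = \alpha(c) > 0$ is the constant of \eqref{Hc positivity}.

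It remains to replace $\norm{p}_\Xspace$ by $\norm{v}_\Xspace$. Since $\Hc \colon \Xspace \to \Xspace^*$ is bounded, the formula for $a$ gives $\abs{a} \leq C\norm{p}_\Xspace$ with $C = C(c)$ independent of $v$; as $v = a\chi_c + p$ with $\chi_c \perp p$ and $\norm{\chi_c}_\Xspace = 1$, we get $\norm{v}_\Xspace^2 = a^2 + \norm{p}_\Xspace^2 \leq (1 + C^2)\norm{p}_\Xspace^2$, and hence \eqref{lower bound Hc} holds with $\tilde\alpha \coloneqq (1-\beta)\alpha/(1+C^2) > 0$. I do not anticipate a real obstacle here; the only points requiring care are verifying that the spectral subspaces are genuinely $\Xspace$-orthogonal and that $\langle \Hc\placeholder,\placeholder\rangle$ restricted to $\Xspace_+$ is an honest (degenerate) inner product so that Cauchy--Schwarz applies, both of which follow directly from Assumption~\ref{spectral assumptions} and \eqref{Hc positivity}.
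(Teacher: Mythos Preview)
Your proposal is correct and follows essentially the same approach as the paper's proof: spectral decomposition along $\Xspace_-\oplus\Xspace_0\oplus\Xspace_+$, the Pythagorean identities for $\jb{\Hc\placeholder,\placeholder}$, Cauchy--Schwarz on $\Xspace_+$, and the final bound on $\norm{v}_\Xspace$ via $\norm{p}_\Xspace$ using boundedness of $\Hc$. Your quantity $1-\beta$ is exactly the paper's $\abs{\jb{\Hc y,y}}/(a_0^2\mu_c^2)$, so even the constants match.
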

    \begin{proof}
        We decompose $y$ as
        \[
            y = a_0 \chi_c + b_0 T^\prime(0) U_c + p_0, \qquad \text{for some } a_0, b_0 \in \R, ~ p_0 \in \Xspace_+,
        \]
        from which we compute that
        \[
            \jb{\Hc y,y } = -a_0^2\mu_c^2 + \jb{\Hc p_0, p_0},
        \]
        or
        \begin{equation}
            \label{a0 expression}
            a_0^2\mu_c^2 = \jb{H_c p_0, p_0} + \abs{\jb{H_c y,y}},
        \end{equation}
        which in particular implies that $a_0^2 > 0$.
        
        Now, let $v$ be as in the statement of the lemma. Using the spectral decomposition of $\Xspace$, we may likewise write
        \[
            v = a \chi_c + p, \qquad \text{for some }  a \in \R, ~ p \in \Xspace_+,
        \]
        as $v$ has no component in $\Xspace_0$, by assumption. Moreover, we have
        \[
            0 = \jb{\Hc y,v} = - a_0 a \mu_c^2 + \jb{\Hc p_0, p},
        \]
        and therefore
        \begin{equation}
            \label{a expression}
            a = \frac{\jb{\Hc p_0, p}}{a_0 \mu_c^2}.
        \end{equation}
        It follows that
        \begin{align*}
            \jb{\Hc v, v} &= -a^2 \mu_c^2 + \jb{\Hc p,p} = - \frac{\jb{\Hc p_0, p}^2}{a_0^2\mu_c^2} + \jb{\Hc p,p}\\
            &\geq \parn*{1 - \frac{\jb{\Hc p_0, p_0}}{a_0^2 \mu_c^2}}\jb{\Hc p, p}= \frac{\abs{\jb{\Hc y,y}}}{a_0^2\mu_c^2} \jb{H_c p,p} \geq  \alpha\frac{\abs{\jb{\Hc y,y}}}{a_0^2\mu_c^2} \norm{p}_\Xspace^2\\
        \end{align*}
        by the Cauchy--Schwarz inequality applied to $\Hc|_{\Xspace_+}$, \eqref{a0 expression}, and \eqref{Hc positivity}. Finally, the result now follows by combining this inequality with
        \[
            \norm{v}_{\Xspace}^2 = a^2 + \norm{p}_{\Xspace}^2 \leq \parn*{\frac{\norm{\Hc p_0}_{\Xspace^*}^2}{a_0^2 \mu_c^4}+1} \norm{p}_{\Xspace}^2,
        \]
        where we have utilized \eqref{a expression}.
    \end{proof}
    
    We obtain the following as a corollary.
    
    \begin{corollary}
        \label{lower bound Hc corollary}
        Suppose that $d''(c) > 0$. Then there exists a constant $\tilde{\alpha} > 0$ such that \eqref{lower bound Hc} holds for every $v \in \Xspace$ satisfying
        \[
            \jb{\nabla\mom(U_c),v} = 0 \qquad \text{and} \qquad (T'(0)U_c,v)_\Xspace = 0.
        \]
    \end{corollary}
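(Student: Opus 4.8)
The plan is to deduce the corollary directly from \Cref{lower bound Hc lemma} by making the distinguished choice $y \coloneqq \dUdc$, which plays the role of the ``bad direction'' in the Grillakis--Shatah--Strauss argument. First I would verify the hypothesis $\jb{\Hc y, y} < 0$ required by \Cref{lower bound Hc lemma}. By Assumption~\ref{bound state assumption}\ref{bound states improved regularity} the map $c \mapsto U_c$ is $C^1$ into $\nbhdO \cap \Wspace$, and the embedding $\Wspace \hookrightarrow \Vspace$ gives $\dUdc \in \Vspace$; hence $\Hc$ acts on $\dUdc$ as the operator $D^2\augHam(U_c)$ that it extends, and \eqref{abstract d'' identity} yields
\[
    \jb*{ \Hc \dUdc, \dUdc } = \jb*{ D^2 \augHam(U_c) \dUdc, \dUdc } = - d''(c),
\]
which is negative precisely because $d''(c) > 0$ has been assumed.

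Next I would rewrite the first orthogonality constraint appearing in \Cref{lower bound Hc lemma} in terms of the momentum. By \eqref{D^2 E = DP relation}, the functionals $D^2\augHam(U_c)\dUdc$ and $D\mom(U_c)$ agree on $\Vspace$; since $\Hc$ restricts to $D^2\augHam(U_c)$ on $\Vspace$ and $\nabla\mom(U_c)$ restricts to $D\mom(U_c)$ there (the latter by Assumption~\ref{extend DP and DE assumption}), the two elements $\Hc\dUdc$ and $\nabla\mom(U_c)$ of $\Xspace^*$ coincide on $\Vspace$, and therefore coincide on all of $\Xspace$ by density of $\Vspace$ in $\Xspace$. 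Consequently $\jb{\Hc y, v} = \jb{\nabla\mom(U_c), v}$ for every $v \in \Xspace$, so the constraint $\jb{\Hc y, v} = 0$ of \Cref{lower bound Hc lemma} is identical to $\jb{\nabla\mom(U_c), v} = 0$, while the second constraint $(T'(0)U_c, v)_\Xspace = 0$ is unchanged.

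With these two observations in place, applying \Cref{lower bound Hc lemma} to $y = \dUdc$ immediately produces the asserted constant $\tilde{\alpha} > 0$ together with the coercivity \eqref{lower bound Hc} on exactly the subspace described in the statement. The argument is little more than bookkeeping across the scale $\Wspace \hookrightarrow \Vspace \hookrightarrow \Xspace$; the one step that warrants explicit comment is the identification $\Hc \dUdc = \nabla\mom(U_c)$ in $\Xspace^*$, which relies both on $\dUdc \in \Vspace$ (so that \eqref{D^2 E = DP relation} and \eqref{abstract d'' identity} may be read with $\Hc$ in place of $D^2\augHam(U_c)$) and on the density of the embedding $\Vspace \hookrightarrow \Xspace$.
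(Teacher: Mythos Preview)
Your proposal is correct and takes essentially the same approach as the paper: choose $y = \dUdc$, invoke \eqref{abstract d'' identity} to get $\jb{\Hc y,y} = -d''(c) < 0$, and use \eqref{D^2 E = DP relation} to identify $\Hc\dUdc$ with $\nabla\mom(U_c)$, then apply \Cref{lower bound Hc lemma}. The paper's proof is a terse two-line version of exactly this; your additional bookkeeping across $\Wspace \hookrightarrow \Vspace \hookrightarrow \Xspace$ and the density argument for the identification in $\Xspace^*$ are reasonable elaborations of steps the paper leaves implicit.
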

    \begin{proof}
        If $d''(c) > 0$, we may apply Lemma~\ref{lower bound Hc lemma} with $y = \dUdc$, by \eqref{abstract d'' identity}. Furthermore, we have $\Hc \dUdc = \nabla \mom(U_c)$ due to \eqref{D^2 E = DP relation}.
    \end{proof}
    
    Note that in the setting of Lemma~\ref{mod out T lemma}(a),
    \[
        \norm{ T(\tilde{s}(u)) - U_c }_{\Xspace}  = \inf_{r \in \R}{\norm{ T(r)u - U_c }_{\Xspace}} < \rho, \qquad \text{for all } u \in \tube_\rho^\Xspace,
    \]
    whence it makes sense to define the map
    \[
        M \colon \tube_\rho^\Xspace \ni u   \mapsto T(\tilde{s}(u)) u \in \tube_\rho^\Xspace
    \] 
    whenever $\rho > 0$ is small enough for the lemma to apply. Note that $M$ is also invariant under the action of $T$, as
    \begin{equation}
        \label{M invariant under T identity}
        M(T(s)u) = T(\tilde{s}(T(s)u))T(s)u = T(\tilde{s}(u) - s)T(s) u = T(\tilde{s}(u)) u = M(u),
    \end{equation}
    where the second equality comes from Lemma~\ref{mod out T lemma}(c). Moreover, we are able to bound $M(u)$ in the smoother norm.
    
    \begin{lemma}
        \label{upper bound M lemma}
        Let $R > 0$, and suppose that $\rho > 0$ is like in Lemma~\ref{mod out T lemma}. Then
        \[
            \norm{M(u)}_{\Wspace} \leq R + \omega(\rho  + \norm{\iota_{\Wspace \hookrightarrow \Xspace}} R + \norm{U_c}_{\Xspace}) \qquad \text{for all } u \in \tube_\rho^\Xspace \cap \mathcal{B}_R^\Wspace.
        \]
    \end{lemma}
    \begin{proof}
        If $u \in \tube_{\rho}^\Xspace \cap \mathcal{B}_R^\Wspace$, then in particular there exists an $r \in \R$ such that $\norm{T(r)u}_{\Wspace} < R$. Set $v = T(r)u$, and observe that
        \[
            \norm{M(u)}_{\Wspace} = \norm{M(v)}_{\Wspace} = \norm{dT(\tilde{s}(v))v + T(\tilde{s}(v))0}_{\Wspace} \leq R + \omega(\norm{T(\tilde{s}(v))0}_{\Xspace})
        \]
        by \eqref{M invariant under T identity} and Assumption~\ref{abstract symmetry assumption}\ref{affine bound assumption}. The final bound is obtained by combining this inequality with
        \begin{align*}
            \norm{T(\tilde{s}(v))0}_{\Xspace} &= \norm{M(v) - U_c + U_c - dT(\tilde{s}(v))v}_{\Xspace}\\
            &\leq \rho + \norm{U_c}_{\Xspace} + \norm{\iota_{\Wspace \hookrightarrow \Xspace}}R,
        \end{align*}
        where we have used that $v \in \tube_\rho^\Xspace$, since $\tube_\rho^\Xspace$ is invariant under $T$, and that $\Wspace$ embeds continuously into $\Xspace$.
    \end{proof}
    
    We will now use Lemmas \ref{lower bound Hc lemma} and \ref{upper bound M lemma} to obtain the key inequality needed to prove stability. It is convenient to introduce the notation
    \begin{equation}
        \label{P manifold}
        \Pmanifold_c \coloneqq \brac*{ u \in \nbhdO \cap \Vspace  : \mom(u) = \mom(U_c) }
    \end{equation}
    for the level set of the momentum associated with $U_c$.
    
    \begin{lemma}
        \label{lower bound energy difference lemma}
        Suppose that $d''(c) > 0$. Then, for any $R > 0$, there exist $\rho > 0$ and $\beta > 0$ such that
        \begin{equation}
            \label{key stability lower bound}
            E(u) - E(U_c) \geq \beta \norm{M(u)-U_c}_{\Xspace}^2 \qquad \text{for all } u \in \tube_{\rho}^\Xspace \cap \Pmanifold_c \cap \mathcal{B}_R^\Wspace.
        \end{equation}
        Moreover, the assumption that $d''(c) > 0$ can be removed under the additional restriction that $\jb{\Hc y,M(u)-U_c} = 0$ for a fixed $y \in \Xspace$ such that $\jb{\Hc y,y} < 0$.
    \end{lemma}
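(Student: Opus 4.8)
The plan is to reduce \eqref{key stability lower bound} to the coercivity estimate of Corollary~\ref{lower bound Hc corollary} by Taylor-expanding the augmented Hamiltonian at $U_c$, with the scale-of-spaces structure supplying control of the remainder. Write $v \coloneqq M(u) - U_c$. By Lemma~\ref{mod out T lemma}\ref{tilde s minimizer} and \ref{tilde s orthogonal} we have $\norm{v}_\Xspace = \inf_{r \in \R}\norm{T(r)u - U_c}_\Xspace < \rho$ and $(v, T'(0)U_c)_\Xspace = 0$, and by Lemma~\ref{upper bound M lemma} the element $M(u) = U_c + v$ stays in a fixed bounded subset $B$ of $\Wspace$ as $u$ ranges over $\tube_\rho^\Xspace \cap \mathcal{B}_R^\Wspace$, hence so does $v$. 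Since $\eng$ and $\mom$ are $T$-invariant (by \eqref{abstract energy invariance under T} and \eqref{abstract momentum invariance under T}) and $u \in \Pmanifold_c$, we get $\mom(M(u)) = \mom(u) = \mom(U_c)$ and $\eng(u) = \eng(M(u))$, so that $\eng(u) - \eng(U_c) = \augHam(M(u)) - \augHam(U_c)$. Applying Remark~\ref{continuity remark} to $\augHam$ near $U_c$, using $D\augHam(U_c) = 0$ from \eqref{abstract stationary equation} and that $\Hc$ extends $D^2\augHam(U_c)$, we obtain
\[
\eng(u) - \eng(U_c) = \tfrac{1}{2}\jb{\Hc v, v} + O(\norm{v}_\Xspace^{2+\theta}),
\]
uniformly in $u$. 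It therefore suffices to prove $\jb{\Hc v, v} \geq \tfrac{1}{2}\tilde{\alpha}\norm{v}_\Xspace^2$ for $\rho$ small enough, since the $O(\norm{v}_\Xspace^{2+\theta})$ term is then absorbed after shrinking $\rho$, giving \eqref{key stability lower bound} with, say, $\beta = \tilde{\alpha}/8$.

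For the coercivity bound I would correct $v$ so that it satisfies exactly the two orthogonality conditions in Corollary~\ref{lower bound Hc corollary}. The first, $(v, T'(0)U_c)_\Xspace = 0$, already holds. For the second, note that $\nabla\mom(U_c) = \Hc\,\dUdc \neq 0$ by Assumption~\ref{bound state assumption}\ref{bound state non-degeneracy} and \eqref{D^2 E = DP relation}, and that $\jb{\nabla\mom(U_c), T'(0)U_c} = \jb{\Hc\,\dUdc, T'(0)U_c} = 0$ because $\Hc T'(0)U_c = 0$ and $I^{-1}\Hc$ is self-adjoint. Hence $z \coloneqq I^{-1}\nabla\mom(U_c)$ is a nonzero element orthogonal to $T'(0)U_c$ with $\jb{\nabla\mom(U_c), z} = \norm{z}_\Xspace^2 \neq 0$. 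On the other hand, Taylor-expanding $\mom$ in the same way and using $\mom(M(u)) = \mom(U_c)$ gives $\jb{\nabla\mom(U_c), v} = o(\norm{v}_\Xspace)$: the interpolation inequality of Assumption~\ref{abstract interpolation assumption} together with the $\Wspace$-bound forces $\norm{v}_\Vspace \to 0$, so the quadratic term of $\mom$ is of higher order in $\norm{v}_\Xspace$. Setting
\[
w \coloneqq v - \frac{\jb{\nabla\mom(U_c), v}}{\norm{z}_\Xspace^2}\,z,
\]
we have $(w, T'(0)U_c)_\Xspace = 0$, $\jb{\nabla\mom(U_c), w} = 0$, and $\norm{v - w}_\Xspace = o(\norm{v}_\Xspace)$. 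Corollary~\ref{lower bound Hc corollary} gives $\jb{\Hc w, w} \geq \tilde{\alpha}\norm{w}_\Xspace^2$; expanding $\jb{\Hc v, v}$ about $w$ and using $\norm{v-w}_\Xspace = o(\norm{v}_\Xspace)$ and $\norm{w}_\Xspace \geq \tfrac{1}{2}\norm{v}_\Xspace$ (both valid for $\rho$ small) yields $\jb{\Hc v, v} \geq \tfrac{1}{2}\tilde{\alpha}\norm{v}_\Xspace^2$ after a further shrinking of $\rho$, completing the argument. For the final assertion one instead applies Lemma~\ref{lower bound Hc lemma} directly to $v$ with the given $y$: the conditions $\jb{\Hc y, v} = 0$ (now assumed) and $(T'(0)U_c, v)_\Xspace = 0$ hold exactly, so no correction is needed and the hypothesis $d''(c) > 0$ is never used.

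I expect the main obstacle to be precisely the handling of the \emph{approximate} momentum constraint: $v = M(u) - U_c$ lies on the momentum level set only up to an error quadratic in $\norm{v}_\Xspace$, so it does not sit exactly in the subspace where $\Hc$ is coercive. The saving observation is that the correction can be taken along $I^{-1}\nabla\mom(U_c)$, which is automatically orthogonal to $T'(0)U_c$, so restoring the momentum constraint does not spoil the symmetry constraint; and since the correction has size $o(\norm{v}_\Xspace)$ while the coercivity gives a lower bound of order $\norm{v}_\Xspace^2$, it is absorbed once $\rho$ is small. A secondary point needing care is that all the error terms above must be measured in the $\Xspace$-norm even though $\augHam$ and $\mom$ are only $C^3$ on $\Vspace$; this is exactly the role of Assumption~\ref{abstract interpolation assumption} (through Remark~\ref{continuity remark}) together with the uniform $\Wspace$-bound on $M(u)$ from Lemma~\ref{upper bound M lemma}.
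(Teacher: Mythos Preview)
Your proposal is correct and follows essentially the same route as the paper: set $v = M(u)-U_c$, Taylor-expand $\augHam$ at $U_c$, project $v$ orthogonally onto $N = I^{-1}\nabla\mom(U_c)$ to obtain a corrected $w$ satisfying both constraints of Corollary~\ref{lower bound Hc corollary}, and absorb the correction and the cubic remainder using Assumption~\ref{abstract interpolation assumption} and Lemma~\ref{upper bound M lemma}. The only cosmetic differences are that the paper tracks errors as $O(\norm{v}_\Vspace^3)$ and converts to $\Xspace$ at the very end, whereas you invoke Remark~\ref{continuity remark} up front to get $O(\norm{v}_\Xspace^{2+\theta})$, and that the paper verifies $(N,T'(0)U_c)_\Xspace=0$ via \eqref{abstract T'(0) and P' identity} and skew-adjointness of $J$ rather than via $\Hc T'(0)U_c=0$.
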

    \begin{proof}
        Let $u$ be as in the statement of the lemma, and set $v \coloneqq M(u) - U_c$. Expanding $E_c$ in a neighborhood of $U_c$ in $\Vspace$, recalling that $U_c$ is a critical point and that both the energy and momentum are conserved by the group, yields
        \begin{equation}
            \label{augHam expansion} 
            E_c(u) = E_c(U_c + v) = E_c(U_c) + \frac{1}{2}\jb{H_c v,v} + O(\norm{v}_{\Vspace}^3).
        \end{equation}
        Note that $(v,T'(0)U_c)_{\Xspace} = 0$ by Lemma~\ref{mod out T lemma}\ref{tilde s orthogonal}, so if in addition $\jb{\Hc y,v} = 0$, then Lemma~\ref{lower bound Hc lemma} ensures the existence of an $\tilde{\alpha} > 0$, independent of $v$, such that
        \[
            \jb{H_c v,v} \geq \tilde{\alpha} \norm{v}_{\Xspace}^2.
        \]
        
        If, on the other hand, $d''(c) > 0$, we decompose $v$ as
        \begin{equation}
            \label{ortho decomposition v}
            v = \lambda N + w, \qquad N \coloneqq I^{-1}\nabla P(U_c),
        \end{equation}
        with $(N,w)_{\Xspace}= 0$. Taking the inner product of both sides of \eqref{ortho decomposition v} with $N$, and using that $P(U_c + v) = P(U_c)$, we find
        \[
            \lambda \norm{N}_{\Xspace}^2 = (v,N)_{\Xspace} = \jb{DP(U_c),v} = O(\norm{v}_{\Vspace}^2),
        \]
        whence $\lambda = O(\norm{v}_{\Vspace}^2)$. It follows that
        \[
            \jb{H_c v,v} = \jb{H_c w,w} + O(\norm{v}_{\Vspace}^3).
        \]
        
        We wish to apply Corollary~\ref{lower bound Hc corollary} to obtain a lower bound for $\jb{H_c w,w}$. In that connection, observe that $\jb{\nabla P(U_c),w} = 0$, as $w$ is orthogonal to $N$ by construction. Moreover,
        \[
            (w,T'(0)U_c)_{\Xspace} = (v,T'(0)U_c) - \lambda \jb{\nabla P(U_c),T'(0)U_c} = 0
        \]
        in view of Lemma \ref{mod out T lemma} and \eqref{abstract T'(0) and P' identity}. Thus
        \[
            \jb{H_c v,v} \geq \tilde{\alpha}\norm{w}_{\Xspace}^2 + O(\norm{v}_{\Vspace}^3),
        \]
        where we can eliminate $w$ in favor of $v$ by observing that
        \[
            \norm{w}_{\Xspace}^2 \geq \parn*{\norm{v}_{\Xspace} - \abs{\lambda} \norm{N}_{\Xspace}}^2 \geq \norm{v}_{\Xspace}^2 - O(\norm{v}_{\Vspace}^3).
        \]
        
        In either case, the desired lower bound \eqref{key stability lower bound} follows if we can control the cubic $O(\norm{v}_{\Vspace}^3)$-remainder in \eqref{augHam expansion} by using the quadratic $\norm{v}_{\Xspace}^2$. This is precisely the motivation behind Assumption~\ref{abstract interpolation assumption}.  Indeed, \eqref{abstract interpolation inequality} and Lemma~\ref{upper bound M lemma} imply that 
        \[
            \norm{v}_{\Vspace}^3 \leq C \norm{v}_{\Xspace}^{2+\theta} \norm{v}_{\Wspace}^{1-\theta} \leq C\rho^\theta [R + \omega(\rho  + \norm{\iota_{\Wspace \hookrightarrow \Xspace}} R + \norm{U_c}_{\Xspace}) + \norm{U_c}_{\Wspace}]^{1-\theta} \norm{ v }_{\Xspace}^2,
        \]
        which enables us to absorb the remainder into the quadratic term by taking sufficiently small $\rho$. Note that we can replace $E_c$ by $E$ due to the assumption that $u \in \Pmanifold_c$.
    \end{proof}
    
    We are now prepared to prove the main theorem of the section on the conditional orbital stability of the bound state $U_c$.
    
    \begin{proof}[Proof of Theorem~\ref{abstract stability theorem}]
        Seeking a contradiction, suppose there exist $R > 0$, $\rho > 0$, and a sequence of solutions $u_n \colon [0, t_0^n) \to \mathcal{B}_R^\Wspace$, with initial data $u_0^n$, such that $\norm{M(u_0^n)-U_c}_{\Xspace} \to 0$, but for which
        \[
            \norm{M(u_n(\tau_n))-U_c}_{\Xspace} = \rho
        \]
        for some $\tau_n \in (0, t_0^n)$. Without loss of generality, we may take $\tau_n$ to be the \emph{first} time that $u_n$ exits $\tube_{\rho}^\Xspace$. Moreover, we can shrink $\tube_{\rho}^\Xspace$ such that Lemma~\ref{lower bound energy difference lemma} applies. Together with the conservation of energy and momentum, we deduce the existence of a $\beta > 0$ such that
        \[
            E(u_0^n) - E(U_c) \geq \beta \norm{M(u_n(\tau_n))-U_c}_{\Xspace}^2 = \beta \rho ^2
        \]
        for every $n$. On the other hand, $E(u_0^n) = E(M(u_0^n))$, and $\norm{M(u_0^n) - U_c}_{\Xspace} \to 0$. Combined with the fact that $\sup_{n}{\norm{M(u_0^n) - U_c}_{\Wspace}} \lesssim_R 1$ by Lemma~\ref{upper bound M lemma}, we can use Assumption~\ref{abstract interpolation assumption} to deduce that $M(u_0^n) \to  U_c$ in $\Vspace$, and therefore that $E(u_0^n) \to E(U_c)$. But this contradicts the strictly positive lower bound on $E(u_0^n) - E(U_c)$ derived above, and hence we have arrived at a contradiction.  
    \end{proof}

\section{Instability in the general setting}
    \label{abstract instability}
    
    This section is devoted to proving Theorem~\ref{abstract instability theorem} on the conditional orbital instability of $U_c$, under the hypothesis that the moment of instability satisfies $d''(c) < 0$.  In contrast to Section~\ref{abstract stability section}, the state-dependence of the Poisson map $J$ presents a more serious technical challenge to the analysis here.
    
    \subsection{Identification of a negative direction}
        Because we do not assume that $J(u)$ is surjective, and because $\chi_c$ does not necessarily lie in $\Wspace$, we must make further modifications to the GSS program.  The next lemma shows that it is possible to find a negative direction $z \in \Wspace$ that is not only tangent to $\Pmanifold_c$, but also lies in the range of a restriction of $J(U_c)$.  This follows from a surprisingly simple density argument.  
        
        \begin{lemma} \label{existence of z lemma}
            Suppose that $d''(c) < 0$. Then there exists $z \in \Dom(T'(0)|_\Wspace)$, of the form $z = J(U_c)IZ$ for some $Z \in \Dom(T'(0))$, such that 
            \begin{equation}
                \label{properties of z}
                \jb[\big]{ D^2 \augHam(U_c) z, z } < 0 \qquad \text{and} \qquad \jb*{ D\mom(U_c), z } = 0. 
            \end{equation}
        \end{lemma}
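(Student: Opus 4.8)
\emph{Strategy.} The plan is to run the classical Grillakis--Shatah--Strauss index count, but carried out first in the energy space $\Xspace$ and then transferred into the admissible class of directions by a density argument. First I would produce a negative direction for $\Hc$ that is tangent to the momentum level set. Put $y \coloneqq \dUdc$. By \eqref{D^2 E = DP relation} we have $\Hc y = \nabla\mom(U_c)$, so \eqref{abstract d'' identity} gives $\langle \Hc y, y\rangle = \langle\nabla\mom(U_c),y\rangle = -d''(c) =: b$, which is \emph{positive} since $d''(c)<0$. Writing $a \coloneqq \langle\nabla\mom(U_c),\chi_c\rangle$, I take the projected vector $\tilde z \coloneqq \chi_c - (a/b)\,y \in \Xspace$. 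Then $\langle\nabla\mom(U_c),\tilde z\rangle = a - (a/b)b = 0$, while, using $\langle\Hc\chi_c,\chi_c\rangle = -\mu_c^2$ (\Cref{spectral assumptions}), $\langle\Hc\chi_c,y\rangle = \langle\Hc y,\chi_c\rangle = a$ by self-adjointness of $I^{-1}\Hc$, and $\langle\Hc y,y\rangle = b$, a one-line computation gives
\[
    \langle\Hc\tilde z,\tilde z\rangle = -\mu_c^2 - \frac{2a^2}{b} + \frac{a^2}{b} = -\mu_c^2 - \frac{a^2}{b} < 0 .
\]
(Only $d''(c)\neq 0$ is used here; its sign enters through $b>0$.)

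\emph{The admissible class and its density --- the main obstacle.} Let $\mathcal D$ be the set of all $z = J(U_c)IZ$ with $Z \in \Dom(T'(0))$ and $z \in \Dom(T'(0)|_\Wspace)$; this is a linear subspace of $\Xspace$, and is exactly where the lemma asks us to find $z$. The crux --- and the step I expect to be the only genuinely delicate one --- is that $\mathcal D$ is \emph{dense} in $\Xspace$: one must approximate a target in $\Xspace$ while simultaneously keeping $Z \in \Dom(T'(0))$, keeping $J(U_c)IZ \in \Dom(T'(0)|_\Wspace)$, and staying in the range of $J(U_c)$. I would handle this by group averaging. \Cref{abstract symmetry assumption}\ref{range density} supplies a dense subset $\Dom(T'(0)|_\Wspace)\cap\Rng\hat{J}$ of $\Xspace$; given $w$ in it, write $w = \hat{J}IZ_0$, legitimate since $\hat{J}$ is injective and $I$ onto. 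For $\phi \in C_c^\infty(\R)$ the Bochner averages $Z_0^\phi \coloneqq \int_\R\phi(s)\,dT(s)Z_0\,ds$ and $w^\phi \coloneqq \int_\R\phi(s)\,dT(s)w\,ds$ satisfy $Z_0^\phi \in \Dom(dT'(0)) = \Dom(T'(0))$ and $w^\phi \in \Dom(T'(0)|_\Wspace)$ by the standard smoothing property of strongly continuous groups (\Cref{abstract symmetry assumption}\ref{unitary assumption},\ref{strong continuity}). Because $\hat{J}I$ is closed and commutes with every $dT(s)$ by the first identity in \eqref{abstract commutation identity}, it passes through the Bochner integral, so $Z_0^\phi \in \Dom(\hat{J}I)$ with $\hat{J}IZ_0^\phi = w^\phi$; hence $z^\phi \coloneqq B(U_c)w^\phi = J(U_c)IZ_0^\phi$ lies in $\mathcal D$. (Here one also uses that $B(U_c)$ maps $\Dom(T'(0)|_\Wspace)$ into itself, which follows by differentiating the second identity in \eqref{abstract commutation identity} at $s=0$, using \Cref{abstract symplectic assumption}\ref{regularity of J} and $U_c \in \Dom(T'(0)|_\Wspace)$.) Letting $\phi$ run through an approximate identity, $w^\phi\to w$ and hence $z^\phi \to B(U_c)w$ in $\Xspace$; since $B(U_c)$ is a bounded bijection and $w$ ranges over a dense set, $\mathcal D$ is dense.

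\emph{A concrete correction direction.} I would single out $v_0 \coloneqq J(U_c)IT'(0)U_c$, which lies in $\mathcal D$: indeed $T'(0)U_c \in \Dom(T'(0))$ and $IT'(0)U_c \in \Dom(\hat{J})$ by \eqref{technical bound state assumption}, and $v_0 = B(U_c)\bigl(\hat{J}IT'(0)U_c\bigr) \in \Dom(T'(0)|_\Wspace)$ by \eqref{second technical bound state assumption} together with the above property of $B(U_c)$. Using the skew-adjointness of $J(U_c)$ and $J(U_c)\nabla\mom(U_c) = T'(0)U_c$ from \eqref{abstract T'(0) and P' identity},
\[
    \langle\nabla\mom(U_c),v_0\rangle = -\langle J(U_c)\nabla\mom(U_c),\, IT'(0)U_c\rangle = -\norm{T'(0)U_c}_{\Xspace}^2 \neq 0
\]
by the non-degeneracy in \Cref{bound state assumption}\ref{bound state non-degeneracy}.

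\emph{Conclusion.} Finally, pick $z' \in \mathcal D$ with $\norm{z'-\tilde z}_{\Xspace}$ small (possible by density) and correct it within $\mathcal D$ by setting
\[
    z \coloneqq z' - \frac{\langle\nabla\mom(U_c),z'\rangle}{\langle\nabla\mom(U_c),v_0\rangle}\, v_0 \in \mathcal D ,
\]
so that $\langle\nabla\mom(U_c),z\rangle = 0$. Since $\langle\nabla\mom(U_c),z'\rangle = \langle\nabla\mom(U_c),z'-\tilde z\rangle = O(\norm{z'-\tilde z}_{\Xspace})$, we also get $\norm{z-\tilde z}_{\Xspace} = O(\norm{z'-\tilde z}_{\Xspace})$. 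As $v\mapsto\langle\Hc v,v\rangle$ is continuous on $\Xspace$, taking $z'$ close enough to $\tilde z$ forces $\langle\Hc z,z\rangle<0$ by the first step. This $z$ has exactly the form $z = J(U_c)IZ$ with $Z \in \Dom(T'(0))$, lies in $\Dom(T'(0)|_\Wspace)$, and satisfies $\langle D^2\augHam(U_c)z,z\rangle<0$ and $\langle D\mom(U_c),z\rangle=0$, as required.
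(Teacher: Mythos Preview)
Your proof is correct and follows the same overall GSS-style strategy as the paper: construct the same negative tangent direction $\tilde z = \chi_c + \tfrac{\langle\nabla\mom(U_c),\chi_c\rangle}{d''(c)}\dUdc$ in $\Xspace$, then approximate it within the admissible class while correcting to stay on the momentum level set.

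The main difference is in how you handle the admissible class. You run a group-averaging (Bochner mollification) argument to force $Z_0^\phi \in \Dom(T'(0))$, whereas the paper observes that this constraint is \emph{automatic}: because $B(U_c)$ restricts to an isomorphism of $\Dom(T'(0)|_\Wspace)$ (from the second identity in \eqref{abstract commutation identity}), any $z = J(U_c)IZ \in \Dom(T'(0)|_\Wspace)$ forces $\hat{J}IZ \in \Dom(T'(0)|_\Wspace)$, and then \eqref{abstract derivative commutation identity} gives $Z \in \Dom(T'(0))$. Hence the paper can take $\mathbb{U} = \Dom(T'(0)|_\Wspace)\cap\Rng J(U_c)$ and get density in $\Xspace$ directly from \Cref{abstract symmetry assumption}\ref{range density} and the bijectivity of $B(U_c)$, with no mollification needed. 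Your averaging is correct, just more elaborate than necessary. On the other hand, your explicit correction direction $v_0 = J(U_c)IT'(0)U_c$, with the clean computation $\langle\nabla\mom(U_c),v_0\rangle = -\norm{T'(0)U_c}_\Xspace^2 \neq 0$ via \eqref{abstract T'(0) and P' identity} and \Cref{bound state assumption}\ref{bound state non-degeneracy}, is more concrete than the paper's abstract existence argument (the paper simply picks any $v \in \mathbb{U}$ not orthogonal to $N$, which must exist by density).
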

        \begin{proof}
            For ease of notation, we once again set $N \coloneqq I^{-1}\nabla P (U_c)$. Defining the quadratic form $Q \in C^0(\Xspace; \R)$ by 
            \[
                Q(u) \coloneqq \jb*{ \Hc u, u},
            \]
            we see that \eqref{properties of z} can be rephrased as $z$ satisfying
            \[
                Q(z) < 0 \qquad \text{and} \qquad \parn*{ N, z}_{\Xspace} = 0.
            \]
            
            The element
            \[
                y \coloneqq \frac{\jb{\nabla  P(U_c),\chi_c}}{d''(c)} \dUdc + \chi_c \in \Xspace
            \]
            satisfies both of these properties because
            \[
                Q(y) = \frac{\jb{\nabla  P(U_c),\chi_c}^2}{d''(c)} - \mu_c^2 < 0, \qquad (N,\,y)_{\Xspace} = 0
            \]
            by \eqref{D^2 E = DP relation} and \eqref{abstract d'' identity}. However, $y$ does not necessarily lie in $\Dom(T'(0)|_\Wspace)$, nor must it be in the range of $J(U_c)$.
            
            Note that $B(U_c)$ restricts to an isomorphism on $\Dom(T'(0))|_\Wspace$ by \eqref{abstract commutation identity}. Thus, if $J(U_c)IZ \in \Dom(T'(0)|_\Wspace)$, then $\hat{J}IZ \in \Dom(T'(0)|_\Wspace)$, and consequently $Z \in \Dom(T'(0))$ by \eqref{abstract derivative commutation identity}. To complete the proof, it suffices to show that
            \[
                \mathbb{U} \coloneqq \Dom(T'(0)|_\Wspace) \cap \Rng{J(U_c)}
            \]
            is dense in $N^\perp$, where $N^\perp \coloneqq \brac{ u \in \Xspace : (N, u)_{\Xspace} = 0}$. Recall that $\mathbb{U}$ is dense in $\Xspace$ due to Assumption~\ref{abstract symmetry assumption}\ref{range density}.
            
            First we claim that there exists $v \in \mathbb{U}$ such that $(N, v)_{\Xspace} \neq 0$.  Were this not the case, we would have $\mathbb{U} \subset N^\perp$, which would contradict its density in $\Xspace$.  Without loss of generality, we may choose $v$ such that $(N, v)_{\Xspace} = 1$. Now, let $u \in N^\perp$ be given.  By density, there exists an approximating sequence $\{ u_n \} \subset \mathbb{U}$ with $u_n \to u$ in $\Xspace$.  Putting
            \[
                w_n \coloneqq u_n - \parn*{N, u_n}_{\Xspace} v,
            \]
            we see that the sequence $\brac{ w_n } \subset N^\perp \cap \mathbb{U}$, and that
            \[
                w_n \to u - \parn*{N, u}_{\Xspace} v = u \text{ in } \Xspace,
            \]
            whence $\mathbb{U}$ is indeed dense in $N^\perp$.  
            
            By the argument above, there is a sequence $\brac{ z_n } \subset \mathbb{U} \cap N^\perp$ such that $z_n \to y$ in $\Xspace$. For $n$ sufficiently large, $Q(z_n) < 0$ by continuity, and so the lemma is proved.
        \end{proof}

    \subsection{Lyapunov function}  
        
        In the previous subsection, we constructed a vector $z$ in the negative cone of $\Hc$, that moreover is tangent to the fixed momentum manifold $\Pmanifold_c$ at $U_c$.  The strategy at this point is to use $z$ to build a Lyapunov function for the abstract Hamiltonian system \eqref{abstract Hamiltonian system}, and thereby prove instability.  
         
        In the next lemma, we follow Grillakis, Shatah, and Strauss by introducing a functional $A$; designed so that the corresponding Hamiltonian vector field (i) points in the direction $z$ at $U_c$, and (ii) is in the kernel of $D\mom$ in a tubular neighborhood of $U_c$.  
        
        \begin{lemma}
            \label{existence of A lemma}
            There exists a $\rho > 0$ and a functional $A \in C^1(\tube_\rho^\Xspace; \R)$ having the following properties:
            \begin{enumerate}[label=\rm(\alph*)]
                \item \label{A is invariant under T}
                    $A(T(s)u) = A(u)$, for all $u \in \tube_\rho^\Xspace$ and $s \in \R$.
                \item \label{DA in domain of J}
                    $DA(u) \in \Dom(\hat{J})$, for all $u \in \tube_\rho^\Xspace$.
                \item \label{JDA at U_c}
                    $J(U_c) DA(U_c) = -z$, where $z$ is like in Lemma~\ref{existence of z lemma}.
                \item \label{JDA regularity}
                    The mapping $u \mapsto J(u)DA(u)$ is of class $C^1(\tube_{\nu}^\Wspace ; \Wspace)$, where $\nu > 0$ is such that $u \in \tube_\nu^\Wspace \subset \tube_{\rho}^\Xspace$; and
                \item \label{JDA is tangent to manifold}
                    $\jb{D \mom(u),J(u)DA(u)} = 0$ for all $u \in \tube_\nu^\Wspace$.
            \end{enumerate}
        \end{lemma}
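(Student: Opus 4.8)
The plan is to follow the Grillakis--Shatah--Strauss construction of a Lyapunov functional, rendered $T$-invariant by composing with the slice map $M(u) = T(\tilde s(u))u$ of \Cref{mod out T lemma}. Write $z = J(U_c)IZ$ with $Z \in \Dom(T'(0))$ as produced by \Cref{existence of z lemma}, fix $\rho > 0$ small enough for \Cref{mod out T lemma} to apply (I treat the non-periodic case; the periodic one is identical), and set
\[
    A(u) \coloneqq -(Z, M(u))_\Xspace = -(Z, T(\tilde s(u)) u)_\Xspace, \qquad u \in \tube_\rho^\Xspace.
\]
Then property~\ref{A is invariant under T} is immediate from \eqref{M invariant under T identity}. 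Writing $(Z, T(s)u)_\Xspace = (dT(-s)Z, u)_\Xspace + (Z, T(s)0)_\Xspace$ and using that $Z \in \Dom(T'(0)) = \Dom(dT'(0))$ while $T(\placeholder)0 \in C^3(\R;\Wspace)$, the map $(s,u) \mapsto (Z, T(s)u)_\Xspace$ is jointly $C^1$; since also $\tilde s \in C^2$, it follows that $A \in C^1(\tube_\rho^\Xspace;\R)$ with
\[
    DA(u) = \kappa(u)\, D\tilde s(u) - I\, dT(-\tilde s(u)) Z, \qquad \kappa(u) \coloneqq -(Z, T'(0) M(u))_\Xspace,
\]
the scalar $\kappa$ being interpreted via the differentiated form of the above splitting, so as to make sense even when $M(u) \notin \Dom(T'(0))$.

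The key point for property~\ref{JDA at U_c} is that the ``bad'' term drops out at $U_c$. Since $\tilde s(U_c) = 0$ and $M(U_c) = U_c$, one has $DA(U_c) = \kappa(U_c)\, D\tilde s(U_c) - IZ$ with $\kappa(U_c) = -(Z, T'(0) U_c)_\Xspace$; but $T'(0) U_c = J(U_c)\nabla\mom(U_c)$ by \eqref{abstract T'(0) and P' identity}, so the skew-adjointness of $J(U_c)$ on $\Dom(\hat J)$ (applicable since $IZ, \nabla\mom(U_c) \in \Dom(\hat J)$) yields
\[
    (Z, T'(0) U_c)_\Xspace = \jb{IZ, J(U_c)\nabla\mom(U_c)} = -\jb{J(U_c)IZ, \nabla\mom(U_c)} = -\jb{D\mom(U_c), z} = 0,
\]
the last step being exactly the second property of $z$ in \Cref{existence of z lemma}. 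Hence $\kappa(U_c) = 0$, $DA(U_c) = -IZ$, and $J(U_c)DA(U_c) = -J(U_c)IZ = -z$, which is property~\ref{JDA at U_c}. Property~\ref{DA in domain of J} is read off termwise from the formula for $DA(u)$: $D\tilde s(u) \in \Dom(\hat J)$ by \Cref{mod out T lemma}\ref{image tube under tilde s prime}, while $I\, dT(-\tilde s(u)) Z \in \Dom(\hat J)$ because $I^{-1}\Dom(\hat J)$ is invariant under the linear symmetry group (\Cref{abstract symmetry assumption}\ref{invariances}) and $Z \in I^{-1}\Dom(\hat J)$.

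Property~\ref{JDA is tangent to manifold} then comes essentially for free from $T$-invariance. Differentiating $A(T(s)u)$ at $s = 0$ gives $\jb{DA(u), T'(0) u} = 0$ for every $u \in \Dom(T'(0))$, and combining this with \eqref{abstract T'(0) and P' identity}, the inclusion $\nabla\mom(u) \in \Dom(\hat J)$ (\Cref{abstract symmetry assumption}\ref{T'(0) assumption}), property~\ref{DA in domain of J}, and the skew-adjointness of $J(u)$ yields
\[
    \jb{D\mom(u), J(u) DA(u)} = -\jb{J(u)\nabla\mom(u), DA(u)} = -\jb{DA(u), T'(0) u} = 0
\]
for all $u \in \Dom(T'(0)|_\Vspace) \cap \nbhdO$, hence for all $u \in \tube_\nu^\Wspace$ by density of $\Dom(T'(0)|_\Wspace)$ in $\Wspace$ together with the continuity on $\tube_\nu^\Wspace$ of $u \mapsto \jb{\nabla\mom(u), J(u) DA(u)}$, which in turn follows from property~\ref{JDA regularity}.

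Everything therefore reduces to property~\ref{JDA regularity}, which I expect to be the main obstacle. Applying $J(u) = B(u)\hat J$ to $DA(u)$ and using the commutation relations \eqref{abstract commutation identity} — so that $\hat J I\, dT(-\tilde s(u)) Z = dT(-\tilde s(u))\, \hat J IZ = dT(-\tilde s(u))\, \zeta$ with $\zeta \coloneqq \hat J IZ = B(U_c)^{-1}z$ — produces
\[
    J(u) DA(u) = \kappa(u)\, g(u) - B(u)\, dT(-\tilde s(u))\, \zeta, \qquad g(u) \coloneqq J(u) D\tilde s(u).
\]
Here $g \in C^1(\tube_\rho^\Xspace \cap \Wspace;\Wspace)$ by \Cref{mod out T lemma}\ref{image tube under tilde s prime}; moreover $\zeta \in \Dom(T'(0)|_\Wspace)$, because $z \in \Dom(T'(0)|_\Wspace)$ and $B(U_c)^{\pm1}$ preserve $\Dom(T'(0)|_\Wspace)$ (again via \eqref{abstract commutation identity} and the $C^1$-regularity of $B$), so that $s \mapsto dT(-s)\zeta$ is $C^1$ into $\Wspace$ and hence, since $B \in C^1(\nbhdO\cap\Wspace;\Lin(\Wspace))$ and $\tilde s \in C^2$, the map $u \mapsto B(u)\, dT(-\tilde s(u))\, \zeta$ is $C^1$ into $\Wspace$ on $\tube_\nu^\Wspace$. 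It thus remains only to show that the scalar $\kappa$ is of class $C^1$ on $\tube_\nu^\Wspace$, after which $\kappa g$ is $C^1$ into $\Wspace$ and the proof is complete upon choosing $\nu > 0$ so small that $\tube_\nu^\Wspace \subset \tube_\rho^\Xspace$ (possible since $\Wspace \hookrightarrow \Xspace$). This last regularity fact is the delicate one: it requires unwinding $\kappa(u) = (dT(-\tilde s(u))\, dT'(0) Z, u)_\Xspace - (Z, \partial_s[T(s)0]|_{s=\tilde s(u)})_\Xspace$ and carefully controlling how $dT(-\tilde s(u))$ transports the direction $Z$, leaning on $\tilde s \in C^2$, the $C^1$-regularity of $D\tilde s$ into $\Xspace^*$, and the regularity of the bound-state data — and, should the bookkeeping require it, on strengthening the smoothness of $z$ (hence $Z$) at the outset, which is harmless since $z$ was obtained by a density argument in \Cref{existence of z lemma}. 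This regularity accounting, forced by the state-dependence of $B$ and the $\Wspace \hookrightarrow \Xspace$ mismatch, is the principal technical content; the remaining properties are as sketched above.
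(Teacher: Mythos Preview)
Your construction and the verification of parts~\ref{A is invariant under T}, \ref{DA in domain of J}, \ref{JDA at U_c}, and \ref{JDA is tangent to manifold} are essentially the same as the paper's (your $A$ differs from the paper's by the constant $(Z,U_c)_\Xspace$, which is harmless). The genuine gap is exactly where you flag it: the $C^1$ regularity of the scalar $\kappa(u) = -(Z, T'(0)M(u))_\Xspace$ on $\tube_\nu^\Wspace$. Your proposed route --- unwinding $\kappa$ as $(dT(-\tilde s(u))\,dT'(0)Z, u)_\Xspace - (Z, T'(\tilde s(u))0)_\Xspace$ and differentiating --- needs $dT'(0)Z \in \Dom(dT'(0))$, i.e.\ $Z \in \Dom((dT'(0))^2)$, and this is \emph{not} supplied by the density argument in Lemma~\ref{existence of z lemma}: Assumption~\ref{abstract symmetry assumption}\ref{range density} only gives density of $\Dom(T'(0)|_\Wspace)\cap\Rng\hat J$, not of any higher domain. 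So ``strengthening $z$'' is not obviously harmless within the paper's hypotheses.

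The paper closes this gap by a different (and much cleaner) maneuver: for $u \in \tube_\nu^\Wspace \cap \Dom(T'(0)|_\Wspace)$, one uses \eqref{abstract T'(0) and P' identity} and the skew-adjointness of $J(u)$ to rewrite
\[
    \kappa(u) = -(dT(-\tilde s(u))Z, T'(0)u)_\Xspace = -\jb{IdT(-\tilde s(u))Z, J(u)\nabla\mom(u)} = \jb{D\mom(u), h(u)},
\]
with $h(u) \coloneqq J(u)IdT(-\tilde s(u))Z = B(u)\,dT(-\tilde s(u))\,B(U_c)^{-1}z$. This identity extends to all of $\tube_\nu^\Wspace$ by density of $\Dom(T'(0)|_\Wspace)$, and the right-hand side is manifestly $C^1$ on $\tube_\nu^\Wspace$ because $\mom \in C^3(\nbhdO\cap\Vspace;\R)$ and $h \in C^1(\tube_\nu^\Wspace;\Wspace)$ (the latter by exactly the argument you give for the ``second term''). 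No extra regularity on $Z$ is needed. In short: transfer the derivative from $M(u)$ to $\mom$ via $T'(0)u = J(u)\nabla\mom(u)$ \emph{before} trying to differentiate in $u$, rather than after.
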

        \begin{proof}
            Let $z$ and $Z$ be given as in Lemma~\ref{existence of z lemma}, and choose $\rho > 0$ so that Lemma \ref{mod out T lemma} applies. Put
            \begin{equation}
                \label{def A(u)} 
                A(u) \coloneqq - (Z,M(u)-U_c)_\Xspace \qquad \text{for all } u \in \tube_\rho^\Xspace,
            \end{equation}
            for which part~\ref{A is invariant under T} follows immediately from the corresponding property of $M$ established in \eqref{M invariant under T identity}. The regularity of $\tilde{s}$, and the properties of $Z$, also show that $A$ is $C^1$ with
            \begin{equation}
                \label{formula A'(u) before simplification}
                DA(u) = ((dT'(-\tilde{s}(u))Z,u)_{\Xspace} - (Z,T'(\tilde{s}(u))0)_{\Xspace})D\tilde{s}(u) -IdT(-\tilde{s}(u))Z
            \end{equation}
            for all $u \in \tube_\rho^\Xspace$. Since $D\tilde{s}(u)$ lies in $\Dom(\hat{J})$ by Lemma \ref{mod out T lemma}, while $IdT(-\tilde{s}(u))Z$ is in $\Dom(\hat{J})$ by Assumption~\ref{abstract symmetry assumption}\ref{invariances}, this proves part~\ref{DA in domain of J}.
            
            Next, choose $\nu > 0$ such that $\tube_{\nu}^\Wspace \subset \tube_{\rho}^\Xspace$. When  $u \in \tube_\nu^\Wspace \cap \Dom(T'(0)|_\Wspace)$, the formula for $DA(u)$ in \eqref{formula A'(u) before simplification} simplifies to
            \begin{equation}
                \label{formula A'(u)}
                DA(u) = \jb{D\mom(u),h(u)}D\tilde{s}(u) - IdT(-\tilde{s}(u))Z,
            \end{equation}
            with
            \begin{equation}
                \label{formula h}
                h(u)\coloneqq J(u)IdT(-\tilde{s}(u))Z = B(u)dT(-\tilde{s}(u))B(U_c)^{-1}z.
            \end{equation}
            Here we have used \eqref{abstract T'(0) and P' identity}, Assumption~\ref{abstract symmetry assumption}\ref{commutativity assumption}, and the skew-adjointness of $J(u)$. By density of $D(T'(0)|_\Wspace)$ in $\Wspace$, the formula in \eqref{formula A'(u)} is, in fact, valid for every $u \in \tube_\nu^\Wspace$.
            
            Moreover, applying $J(u)$ to \eqref{formula A'(u)} leads to the expression
            \begin{equation}
                \label{J(u)DA(u) formula}
                J(u)DA(u) = \jb{D\mom(u),h(u)}g(u) - h(u),
            \end{equation}
            where $g$ is the function defined in Lemma~\ref{mod out T lemma}. We have already confirmed that $g$ has the required properties for part~\ref{JDA regularity}, and in light of \eqref{formula h} and the fact that $z \in \Dom(T'(0)|_\Wspace)$ and \eqref{abstract commutation identity}, so does $u \mapsto J(u)DA(u)$. From Lemma~\ref{mod out T lemma} we see that $\tilde{s}(U_c) = 0$, and therefore $h(U_c) = z$. Evaluating \eqref{J(u)DA(u) formula} at $u = U_c$ then yields
            \begin{equation}
                \label{formula A'(U_c) 1}
                J(U_c)DA(U_c) = \jb{D \mom(U_c),z}g(U_c) - z = -z, 
            \end{equation}
            by \eqref{properties of z}, which is part~\ref{JDA at U_c}.
            
            Finally, since the map $s \mapsto A(T(s)u)$ has derivative
            \[
                0 = \jb{DA(u),T'(0)u} = \jb{DA(u),J(u)\nabla P (u)} = - \jb{DP(u),J(u)DA(u)}
            \]
            at $s = 0$ for every $u \in \tube_\nu^\Wspace \cap \Dom(T'(0)|_\Wspace)$ by part~\ref{A is invariant under T}, part~\ref{JDA is tangent to manifold} follows after another appeal to density. Here we have once again made use of the identity \eqref{abstract T'(0) and P' identity}.
        \end{proof}
        
        With the functional $A$ in hand, we next consider the ordinary differential equation
        \begin{equation}
            \label{R ODE}
            \left\{ 
            \begin{alignedat}{2}
                &\dfrac{du}{d\lambda}  &&= -J(u(\lambda)) DA(u(\lambda)) \\
                & u(0)  && = v,
            \end{alignedat}
            \right.
        \end{equation}
        posed in $\tube_{\nu}^\Wspace$, where $\nu > 0$ is taken small enough for Lemma~\ref{existence of A lemma} to apply. Part~\ref{JDA regularity} of the lemma guarantees the existence of a unique solution, $\Phi = \Phi(\lambda, v) \in C^1(\mathcal{N};\tube_\nu^\Wspace)$, to \eqref{R ODE}, where
        \begin{equation}
            \label{flow domain}
            \mathcal{N} = \{(\lambda,v) \in \R \times \tube_{\nu_0}^\Wspace  : \abs{\lambda} < \lambda_0\},
        \end{equation}
        with $0 < \nu_0 < \nu$ and $\lambda_0 = \lambda_0(\nu_0) > 0$. By appealing to the commutation identities in \eqref{abstract commutation identity} and Lemma \ref{existence of A lemma}\ref{A is invariant under T}, we find 
        \begin{equation}
            \label{R commute with T identity}
            T(s) \Phi(\lambda, v) = \Phi(\lambda, T(s)v)
        \end{equation}
        whenever both sides of this equation make sense, which in particular justifies that $\lambda_0$ can be taken to be a constant in \eqref{flow domain}.
        
        Observe that
        \begin{equation}
            \label{Phi derivative at U_c}
            \partial_\lambda\Phi(0,U_c) = z
        \end{equation}
        as a result of Lemma~\ref{existence of A lemma}(c). Furthermore, since
        \[
            \frac{\partial}{\partial\lambda} \mom(\Phi(\lambda, v)) =- \jb*{ D \mom(\Phi(\lambda, v)), J(\Phi(\lambda, v)) DA(\Phi(\lambda, v)) } = 0
        \]
        by Lemma~\ref{existence of A lemma}(d), we have
        \begin{equation}
            \label{R ODE P conservation}
            \mom(\Phi(\lambda, v)) = \mom(v), \qquad \text{for all } (\lambda, v) \in \mathcal{N}.
        \end{equation}
        That is, the flow of \eqref{R ODE} preserves the momentum.
        
        \begin{lemma}[Lyapunov function]
            \label{existence of Lambda lemma}
            There exists a $\nu > 0$ and a functional $\Lambda \in C^1(\tube_\nu^\Wspace; \, \R)$, vanishing on the $U_c$-orbit, such that
            \[
                \eng(\Phi(\Lambda(v), v)) \geq \eng(U_c) \qquad \text{for all } v \in \tube_\nu^\Wspace \cap \Pmanifold_c.
            \]
        \end{lemma}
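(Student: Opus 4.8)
The plan is to follow the classical GSS construction of the Lyapunov functional, but phrased entirely in terms of the flow $\Phi$ built in the previous subsection. The idea is that $\Lambda(v)$ should be the value of the flow parameter $\lambda$ at which $E(\Phi(\lambda,v))$ is minimized along the $z$-direction. Concretely, I would consider the scalar function $(\lambda,v) \mapsto \partial_\lambda E(\Phi(\lambda,v))$ and show it is $C^1$, vanishes at $(\lambda,v) = (0,U_c)$, and has nonzero $\lambda$-derivative there. Indeed, using the chain rule and $\partial_\lambda \Phi = -J(\Phi)DA(\Phi)$,
\[
    \partial_\lambda E(\Phi(\lambda,v)) = -\jb{\nabla E(\Phi(\lambda,v)), J(\Phi(\lambda,v))DA(\Phi(\lambda,v))}.
\]
At $(\lambda,v) = (0,U_c)$ this equals $-\jb{\nabla E(U_c), z}$; but by the stationary equation \eqref{abstract stationary equation}, $\nabla E(U_c) = c\,\nabla P(U_c)$, so it reduces to $-c\jb{D\mom(U_c),z} = 0$ by \eqref{properties of z}. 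For the $\lambda$-derivative at this point, differentiating once more and using \eqref{Phi derivative at U_c} together with $D^2 E_c(U_c) = \Hc$ gives $\partial_\lambda^2 E(\Phi(\lambda,v))|_{(0,U_c)} = -\jb{\Hc z, z} > 0$, again by \eqref{properties of z}. Hence the implicit function theorem produces a $C^1$ function $\Lambda$, defined on a possibly smaller tube $\tube_\nu^\Wspace$, with $\Lambda(U_c) = 0$ and $\partial_\lambda E(\Phi(\Lambda(v),v)) = 0$ for all $v$ in that tube; and near $U_c$ this critical point is a strict local minimum in $\lambda$, so $E(\Phi(\Lambda(v),v)) \geq E(U_c + \textrm{(something close to }U_c\textrm{)})$.

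The remaining point is to upgrade the ``local minimum in the $\lambda$-variable'' into the genuine inequality $E(\Phi(\Lambda(v),v)) \geq E(U_c)$ for all $v \in \tube_\nu^\Wspace \cap \Pmanifold_c$. This is exactly where the momentum constraint enters. Since the flow $\Phi$ preserves the momentum by \eqref{R ODE P conservation}, the point $w := \Phi(\Lambda(v),v)$ still lies in $\Pmanifold_c$ whenever $v$ does. Moreover, by construction $w$ is a critical point of $E$ along the flow direction, which after unwinding the definition of $A$ and using \eqref{J(u)DA(u) formula} means that the $\Xspace$-component of $w - U_c$ in the direction $z$ (equivalently, $\jb{\Hc(\textrm{something}), w - U_c}$, or the appropriate orthogonality built into $DA$) is controlled. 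I would then invoke the second, conditional part of Lemma~\ref{lower bound energy difference lemma}: with $y$ chosen as the negative direction from Lemma~\ref{lower bound Hc lemma} (one can take $y$ related to $z$, since $\jb{\Hc z,z} < 0$), the hypothesis $\jb{\Hc y, M(w) - U_c} = 0$ is arranged precisely by the vanishing of $\partial_\lambda E$ at $\lambda = \Lambda(v)$. That lemma then yields $E(w) - E(U_c) \geq \beta\norm{M(w) - U_c}_\Xspace^2 \geq 0$, provided $v$ (hence $w$) stays in a small enough $\Xspace$-tube and a bounded $\Wspace$-ball — both of which hold after shrinking $\nu$, using the continuity of $\Phi$ in $\Wspace$ from Lemma~\ref{existence of A lemma}\ref{JDA regularity} and Lemma~\ref{upper bound M lemma}.

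The main obstacle I anticipate is bookkeeping the orthogonality/constraint conditions so that Lemma~\ref{lower bound energy difference lemma} genuinely applies to $w = \Phi(\Lambda(v),v)$: one must verify that the vanishing of $\partial_\lambda E$ at $\lambda = \Lambda(v)$ translates, after applying the modulation map $M$ and expanding $E_c$ to second order, into exactly the condition $\jb{\Hc y, M(w) - U_c} = 0$ needed there, rather than some perturbed version of it with uncontrolled error. This requires care because $M$ and $\Phi$ interact nontrivially and because the identity $\nabla E(U_c) = c\nabla P(U_c)$ is only exact at $U_c$; away from $U_c$ one gets $O(\norm{w - U_c}_\Vspace^2)$ corrections that must be absorbed. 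The interpolation inequality of Assumption~\ref{abstract interpolation assumption}, combined with the uniform $\Wspace$-bound on $w$, is what makes this absorption possible, exactly as in the proof of Lemma~\ref{lower bound energy difference lemma}. A secondary technical point is ensuring $\Lambda$ is well-defined and $C^1$ \emph{globally} on the tube (not just near $U_c$), which follows from the uniform nondegeneracy $\partial_\lambda^2 E \approx -\jb{\Hc z,z} > 0$ on a small enough tube by continuity, together with the invariance \eqref{R commute with T identity} that lets one reduce to a neighborhood of $U_c$ after modding out by $T$.
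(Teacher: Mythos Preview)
Your approach has the right spirit but a sign error and a resulting gap. Since $\partial_\lambda\Phi(0,U_c)=-J(U_c)DA(U_c)=z$, one gets $\partial_\lambda^2 E(\Phi)|_{(0,U_c)}=\jb{D^2\augHam(U_c)z,z}=\jb{\Hc z,z}<0$, not $-\jb{\Hc z,z}>0$. Thus $\lambda\mapsto E(\Phi(\lambda,U_c))$ has a strict local \emph{maximum} at $\lambda=0$, which is exactly the saddle geometry one expects since $z$ is a negative direction of $\Hc$; your framing of $\Lambda(v)$ as a minimizer of $E$ along the flow is backwards. The implicit function theorem still applies (the second derivative is nonzero), so your $\Lambda$ exists, but now you must extract $E(\Phi(\Lambda(v),v))\ge E(U_c)$ from Lemma~\ref{lower bound energy difference lemma}, and here the obstacle you yourself flag is real and not resolved: the condition $\partial_\lambda E(\Phi)=0$ at $w=\Phi(\Lambda(v),v)$ reads $\jb{D\augHam(w),J(w)DA(w)}=0$, which after expansion gives only $\jb{\Hc z,w-U_c}=O(\|w-U_c\|_\Vspace^2)$, not the \emph{exact} orthogonality $\jb{\Hc z,M(w)-U_c}=0$ that the second clause of Lemma~\ref{lower bound energy difference lemma} demands. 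Absorbing this error would mean reworking Lemma~\ref{lower bound Hc lemma} to tolerate an approximate constraint, which is a different kind of absorption than the cubic Taylor remainder handled by interpolation in Lemma~\ref{lower bound energy difference lemma}.

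The paper sidesteps this by applying the implicit function theorem to a different scalar function, namely $f(\lambda,v):=\jb{\Hc z,\,M(\Phi(\lambda,v))-U_c}$. One has $f(0,U_c)=0$ and $\partial_\lambda f(0,U_c)=\jb{\Hc z,z}<0$ (using $\jb{\Hc z,T'(0)U_c}=0$), so the resulting $\Lambda$ satisfies $\jb{\Hc z,\,M(\Phi(\Lambda(v),v))-U_c}=0$ \emph{exactly}, and Lemma~\ref{lower bound energy difference lemma} applies directly with $y=z$. The only subtlety is verifying that $f$ is $C^1$ on $\mathcal{N}$, for which the paper uses the identity $\jb{D^2\augHam(T(-s)u)dT(-s)v,w}=\jb{D^2\augHam(u)v,dT(s)w}$ to move the translation onto smooth objects.
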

        One can interpret this lemma as follows.  Because of \eqref{R ODE P conservation}, the flow of \eqref{R ODE} leaves the momentum invariant but it may change the energy in either direction near $U_c$. By avoiding the problematic negative direction in a suitable way, and using Lemma~\ref{mod out T lemma}\ref{tilde s orthogonal} to deal with $\Xspace_0$ and the orbit under $T$, we can make sure that the energy does not decrease. 
        
        \begin{proof}[Proof of Lemma~\ref{existence of Lambda lemma}]  
        
        We wish to apply Lemma~\ref{lower bound energy difference lemma}. To that end, define the function $f \colon \mathcal{N} \to \R$ by 
        \begin{align*}
            f(\lambda, v) &\coloneqq \jb{\Hc z, M(\Phi(\lambda,v)) - U_c}\\
            &= \jb{\Hc z,dT(\tilde{s}(\Phi))(\Phi-U_c)} + \jb{\Hc z, T(\tilde{s}(\Phi))U_c - U_c},
        \end{align*}
        which evidently satisfies $f(0, U_c) = 0$. It is not obvious that this function is differentiable, but by differentiating the identity $\augHam(u) = \augHam(T(s)u)$, one finds that
        \[
            \jb{D^2 \augHam(T(-s)u)dT(-s)v,w} = \jb{D^2\augHam(u)v,dT(s)w}
        \]
        for all $s \in \R, u \in \nbhdO\cap\Vspace$, and $v,w \in \Vspace$, and thus in particular that
        \[
            f(\lambda,v) = \jb{D^2\augHam(T(-\tilde{s}(\Phi))U_c)dT(-\tilde{s}(\Phi))z,\Phi-U_c} + \jb{\Hc z,T(\tilde{s}(\Phi))U_c-U_c},
        \]
        holds for all $(\lambda, v) \in \mathcal{N}$. This expression shows that $f \in C^1(\mathcal{N};\R)$, as $\augHam \in C^3(\nbhdO \cap \Vspace;\R)$ and both $U_c$ and $z$ are in $\Dom(T'(0)|_\Wspace)$. Moreover, 
        \begin{align*}
            \partial_\lambda f(0,U_c) &= \jb{\Hc z, z} + \jb{D\tilde{s}(U_c),z}\jb{\Hc z, T'(0)U_c} = \jb{\Hc z,z} < 0,
        \end{align*}
        since $\jb{\Hc z,T'(0)U_c} = \jb{z,\Hc T'(0)U_c} = 0$.
        
        An application of the implicit function theorems tells us that there exists a neighborhood $\mathcal{V}$ of $U_c$ in $\tube_{\nu_0}^\Wspace$, and a $C^1$-mapping  $\Lambda \colon \mathcal{V} \to (-\lambda_0, \lambda_0)$ satisfying
        \begin{equation}
            \label{Lambda equation}
            f(\Lambda(v), v) = \jb{\Hc z, M(\Phi(\Lambda(v),v))-U_c} = 0\qquad \text{for all } v \in \mathcal{V}.
        \end{equation}
        In view of \eqref{R commute with T identity} and \eqref{M invariant under T identity}, we have $f(\lambda, T(s) v) = f(\lambda, v)$ for all $s \in \R$ and $(\lambda,v) \in \mathcal{N}$, so $\Lambda$ can be extended to a tubular neighborhood $\tube_\nu^\Wspace$ of the $U_c$-orbit.
        
        We may now use Lemma~\ref{lower bound energy difference lemma} to conclude that, possibly upon shrinking $\tube_\nu^\Wspace$, there exists a $\beta > 0$ such that
        \[
            \eng(\Phi(\Lambda(v),v)) - \eng(U_c) \geq \beta \norm{M(\Phi(\Lambda(v),v))-U_c}_{\Xspace}^2
        \]
        for every $v \in \tube_\nu^\Wspace \cap \Pmanifold_c$. In particular, the result follows.
        \end{proof}
        
        \begin{lemma}
            \label{energy inequality lemma}
            There exists a $\nu > 0$ such that
            \[
                \eng(U_c) \leq \eng(v) + \Lambda(v) \mathcal{S}(v) \qquad \text{for all } v \in \tube_\nu^\Wspace \cap \Pmanifold_c,
            \]
            wherein
            \begin{equation}
                \label{def S}
                \mathcal{S}(v) \coloneqq -\jb*{ D\eng(v), J(v) DA(v)}.
            \end{equation}
        \end{lemma}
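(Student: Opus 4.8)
The argument rests on a concavity estimate for the energy along the flow of \eqref{R ODE}, combined with Lemma~\ref{existence of Lambda lemma}. For $(\lambda, v) \in \mathcal{N}$ set $G(\lambda, v) \coloneqq \eng(\Phi(\lambda, v))$. Since $\partial_\lambda \Phi(\lambda, v) = -J(\Phi(\lambda,v)) DA(\Phi(\lambda,v))$ and $\Phi$ takes values in $\tube_\nu^\Wspace \subset \nbhdO \cap \Vspace$, the chain rule together with the definition \eqref{def S} of $\mathcal{S}$ gives
\begin{equation*}
    \partial_\lambda G(\lambda, v) = \jb{ D\eng(\Phi(\lambda,v)), \partial_\lambda\Phi(\lambda,v) } = \mathcal{S}(\Phi(\lambda, v)), \qquad \text{hence} \qquad \partial_\lambda G(0, v) = \mathcal{S}(v).
\end{equation*}
Because $u \mapsto J(u)DA(u)$ is of class $C^1(\tube_\nu^\Wspace; \Wspace)$ by Lemma~\ref{existence of A lemma}\ref{JDA regularity}, the trajectories $\lambda \mapsto \Phi(\lambda, v)$ are twice continuously differentiable as $\Wspace$-valued functions, with $\partial_\lambda^2 \Phi$ depending continuously on $(\lambda, v)$; combined with $\eng \in C^3(\nbhdO \cap \Vspace; \R)$ and $\Wspace \hookrightarrow \Vspace$, it follows that $\partial_\lambda^2 G$ exists and is continuous on $\mathcal{N}$, with
\begin{equation*}
    \partial_\lambda^2 G(\lambda, v) = \jb{ D^2\eng(\Phi)\partial_\lambda\Phi, \partial_\lambda\Phi } + \jb{ D\eng(\Phi), \partial_\lambda^2 \Phi }.
\end{equation*}

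Next I would evaluate $\partial_\lambda^2 G$ at $(0, U_c)$. By \eqref{Phi derivative at U_c}, $\partial_\lambda \Phi(0, U_c) = z$; by the stationary equation \eqref{abstract stationary equation}, $D\eng(U_c) = c\, D\mom(U_c)$; and differentiating the conservation identity \eqref{R ODE P conservation} twice in $\lambda$ at $(0, U_c)$ gives $\jb{ D^2\mom(U_c) z, z } + \jb{ D\mom(U_c), \partial_\lambda^2 \Phi(0, U_c) } = 0$. Substituting these into the formula above yields
\begin{equation*}
    \partial_\lambda^2 G(0, U_c) = \jb{ D^2\eng(U_c) z, z } - c\, \jb{ D^2\mom(U_c) z, z } = \jb{ D^2\augHam(U_c) z, z } = \jb{ \Hc z, z } < 0
\end{equation*}
by Lemma~\ref{existence of z lemma}. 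All of $\eng$, $\mathcal{S}$ and $\Lambda$ are invariant under the symmetry group --- for the first two this follows from \eqref{abstract energy invariance under T}, Lemma~\ref{existence of A lemma}\ref{A is invariant under T} and \eqref{R commute with T identity} (which together give $G(\lambda, T(s)v) = G(\lambda, v)$), and for $\Lambda$ it is established in the proof of Lemma~\ref{existence of Lambda lemma} --- so $\partial_\lambda^2 G$ is $T$-invariant as well. Using the continuity of $\partial_\lambda^2 G$ near $(0, U_c)$, the fact that $\Lambda$ is continuous and vanishes on the $U_c$-orbit, and this invariance, we may shrink $\nu$ so that, for every $v \in \tube_\nu^\Wspace$, one has $\partial_\lambda^2 G(\lambda, v) \le 0$ for all $\lambda$ lying between $0$ and $\Lambda(v)$.

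The proof is then completed by the elementary observation that a $C^2$ function $\phi$ with $\phi'' \le 0$ on the interval between $0$ and $\lambda_1$ satisfies $\phi(\lambda_1) \le \phi(0) + \lambda_1 \phi'(0)$, irrespective of the sign of $\lambda_1$. Applying this to $\phi(\lambda) \coloneqq G(\lambda, v)$ with $\lambda_1 = \Lambda(v)$ gives, for every $v \in \tube_\nu^\Wspace$,
\begin{equation*}
    \eng(\Phi(\Lambda(v), v)) = G(\Lambda(v), v) \le G(0, v) + \Lambda(v)\, \partial_\lambda G(0, v) = \eng(v) + \Lambda(v)\, \mathcal{S}(v).
\end{equation*}
For $v \in \tube_\nu^\Wspace \cap \Pmanifold_c$, Lemma~\ref{existence of Lambda lemma} supplies $\eng(\Phi(\Lambda(v), v)) \ge \eng(U_c)$, and combining the two displays yields $\eng(U_c) \le \eng(v) + \Lambda(v)\, \mathcal{S}(v)$, as desired. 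I expect the main technical point to be the second-order regularity invoked above: one must carefully justify that the $C^1$ vector field $u \mapsto J(u)DA(u)$ into $\Wspace$ generates a flow whose trajectories are $C^2$ in $\lambda$, with a second derivative varying continuously with the initial data, so that the strict negativity of $\partial_\lambda^2 G$ at $(0, U_c)$ propagates to a uniform sign condition on a full tubular neighborhood of the $U_c$-orbit.
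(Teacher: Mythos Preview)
Your argument is correct and follows essentially the same route as the paper: establish strict concavity of $\lambda \mapsto \eng(\Phi(\lambda,v))$ near $(0,U_c)$, propagate this to a tubular neighborhood via $T$-invariance, and combine the resulting tangent-line inequality with Lemma~\ref{existence of Lambda lemma}. The only notable difference is that the paper writes $\eng(\Phi(\lambda,v)) = \augHam(\Phi(\lambda,v)) + c\mom(v)$ using \eqref{R ODE P conservation}, so that $\partial_\lambda g = -\jb{D\augHam(\Phi), J(\Phi)DA(\Phi)}$; since $D\augHam(U_c)=0$, the second derivative at $(0,U_c)$ reduces immediately to $\jb{H_c z, z}$ without having to differentiate the momentum identity twice, and the regularity of $\partial_\lambda g$ follows directly from $\Phi \in C^1$ and $u \mapsto J(u)DA(u) \in C^1$ without explicitly invoking $\partial_\lambda^2\Phi$ --- this addresses exactly the technical point you flag at the end, so your concern there is unnecessary.
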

        \begin{proof}
            Define the $C^1(\mathcal{N};\R)$-function 
            \[
                g(\lambda,v) \coloneqq \eng(\Phi(\lambda,v)) = \augHam(\Phi(\lambda,v))+cP(v),
            \]
            where we have exploited \eqref{R ODE P conservation} in evaluating $\mom$ at $v$. Then, suppressing the dependence of $\Phi$ on $(\lambda, v)$, we find 
            \[
                \partial_\lambda g(\lambda,v) = \jb{D\augHam(\Phi),\partial_\lambda \Phi} =  -\jb{D\augHam(\Phi),J(\Phi)DA(\Phi)},
            \]
            so $\partial_\lambda g \in C^1(\mathcal{N};\R)$ as well by Lemma~\ref{existence of A lemma}(e). It therefore makes sense to compute
            \[
                \partial_{\lambda}^2 g(0,U_c) = \jb{D^2 \augHam(U_c)\partial_{\lambda}\Phi(0,U_c), \partial_\lambda\Phi(0,U_c)} = \jb{H_c z,z} < 0,
            \]
            where we have used that $U_c$ is a critical point of $\augHam$, and the last inequality is Lemma~\ref{existence of z lemma}.
            
            We also see that
            \[
                \partial_\lambda g(0,v) = -\jb{D\augHam(v),J(v)DA(v)}=-\jb{D\eng(v),J(v)DA(v)} = \mathcal{S}(v)
             \] 
            for every $v \in \tube_{\nu_0}^\Wspace.$  It follows that
            \[
                g(\lambda,v) \leq g(0,v) + \partial_\lambda g(0,v) \lambda
            \]
            for small enough $\lambda$, and a possibly smaller neighborhood of $U_c$. This neighborhood can be made tubular, by the same reasoning as in the proof of Lemma~\ref{existence of Lambda lemma}. The desired upper bound now follows by setting $\lambda = \Lambda(v)$ and using Lemma~\ref{existence of Lambda lemma}.
        \end{proof}
        
        The final lemma we need in order to prove the instability theorem is the following.
        
        \begin{lemma}
            \label{initial data curve lemma}
            Suppose that $d''(c) < 0$. Then there exists a $C^2$-curve $\psi \colon (-1, 1) \to \Wspace$ such that
            \begin{enumerate}[label=\rm(\roman*)]
                \item $\psi(0) = U_c$ and $\psi^\prime(0) = z$; 
                \item   \label{curve on manifold}
                    $\psi(s) \in \Pmanifold_c$ for all $s \in (-1,1)$;
                \item $\eng \circ \psi$ has a strict local maximum at $0$.
            \end{enumerate}
        \end{lemma}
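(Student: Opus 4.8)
The plan is to realize $\psi$ as an explicit affine curve in the direction $z$, perturbed by a scalar multiple of $\dUdc$ chosen so that the curve remains on the momentum level set $\Pmanifold_c$. Since $d''(c) < 0$, the identities \eqref{D^2 E = DP relation} and \eqref{abstract d'' identity} give $\jb{D\mom(U_c), \dUdc} = -d''(c) > 0$, and $\dUdc \in \Wspace$ by Assumption~\ref{bound state assumption}\ref{bound states improved regularity}; this vector will supply the transversal correction. First I would introduce the scalar function
\[
    F(s,y) \coloneqq \mom\parn*{ U_c + sz + y\dUdc } - \mom(U_c),
\]
noting that the affine curve $(s,y) \mapsto U_c + sz + y\dUdc$ takes values in $\Wspace \hookrightarrow \Vspace$ and lies in the open set $\nbhdO$ for $(s,y)$ near the origin, so that $F$ is $C^3$ there with $F(0,0) = 0$ and $\partial_y F(0,0) = \jb{D\mom(U_c), \dUdc} = -d''(c) \neq 0$. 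The implicit function theorem then produces a $C^3$ function $y = y(s)$ on a neighbourhood of $0$ with $y(0) = 0$ and $F(s, y(s)) \equiv 0$; differentiating once and using the second relation in \eqref{properties of z} yields $y'(0) = -\jb{D\mom(U_c), z}/(-d''(c)) = 0$.

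With this in hand I would set $\psi(s) \coloneqq U_c + s z + y(s)\dUdc$, which is a $C^2$ (in fact $C^3$) curve into $\Wspace$ satisfying $\psi(0) = U_c$, $\psi'(0) = z$ (because $y(0) = y'(0) = 0$), and $\psi(s) \in \Pmanifold_c$ for every $s$ in its interval of definition; relabelling this interval as $(-1,1)$ gives parts~(i) and~(ii). For part~(iii), the key observation is that $\mom\circ\psi$ is identically $\mom(U_c)$, so along $\psi$ one has $\eng = \augHam + c\mom = \augHam + c\,\mom(U_c)$, i.e. $\eng\circ\psi$ and $\augHam\circ\psi$ differ by a constant. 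Since $\augHam \in C^3(\nbhdO \cap \Vspace; \R)$ and $\psi$ is a $C^2$ curve into $\Wspace \hookrightarrow \Vspace$ remaining in $\nbhdO$, the function $s \mapsto \augHam(\psi(s))$ is $C^2$; because $D\augHam(U_c) = 0$ by \eqref{abstract stationary equation}, its first derivative at $0$ vanishes and its second derivative at $0$ equals $\jb{D^2\augHam(U_c)\psi'(0), \psi'(0)} = \jb{D^2\augHam(U_c)z, z} < 0$ by the first relation in \eqref{properties of z}. A Taylor expansion then gives $\eng(\psi(s)) = \eng(U_c) + \tfrac12 \jb{D^2\augHam(U_c)z, z}\, s^2 + o(s^2) < \eng(U_c)$ for $0 < \abs{s}$ small, so $\eng\circ\psi$ has a strict local maximum at $0$.

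This argument amounts to a single use of the implicit function theorem followed by a second-order Taylor expansion, and I do not anticipate a genuine obstacle; the points that need attention are bookkeeping in nature. One must arrange that the correcting direction lies in $\Wspace$ --- which is precisely why $\dUdc$ is used rather than, say, $I^{-1}\nabla\mom(U_c)$ --- and verify that $\psi$ stays inside $\nbhdO \cap \Vspace$ so that $\eng$, $\mom$, and $\augHam$ are all defined and thrice differentiable along it. The hypothesis $d''(c) < 0$ is exactly what makes $\partial_y F(0,0) \neq 0$, while the tangency $\jb{D\mom(U_c), z} = 0$ from Lemma~\ref{existence of z lemma} is exactly what forces $y'(0) = 0$, hence $\psi'(0) = z$ on the nose.
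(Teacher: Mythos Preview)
Your argument is correct. The key ingredients are exactly as you identified: $z \in \Wspace$ from Lemma~\ref{existence of z lemma}, $\dUdc \in \Wspace$ from Assumption~\ref{bound state assumption}\ref{bound states improved regularity}, the non-degeneracy $\jb{D\mom(U_c),\dUdc} = -d''(c) \neq 0$ from \eqref{abstract d'' identity}, and the tangency $\jb{D\mom(U_c),z}=0$ from \eqref{properties of z} forcing $y'(0)=0$. The second-order expansion of $\augHam\circ\psi$ then gives the strict local maximum.

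The paper proceeds differently: it sets $\psi(s) \coloneqq \Phi(s,U_c)$, where $\Phi$ is the flow of the auxiliary ODE \eqref{R ODE} already built for the Lyapunov construction. Momentum conservation along $\psi$ then comes for free from \eqref{R ODE P conservation}, $\psi'(0)=z$ from \eqref{Phi derivative at U_c}, and the strict local maximum of $\eng\circ\psi$ is read off from the computation $\partial_\lambda^2 g(0,U_c)=\jb{\Hc z,z}<0$ already carried out in the proof of Lemma~\ref{energy inequality lemma}. Your route is more elementary and self-contained, replacing the nonlinear flow by an affine curve plus a scalar implicit-function correction in the $\dUdc$ direction; the paper's route is more economical in context because the flow $\Phi$ and its second-order analysis are needed anyway for the instability argument, so nothing new has to be introduced. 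Both arrive at the same $C^2$ curve properties, and your approach even yields $C^3$.
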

        \begin{proof}
            Define $\psi \colon (-\lambda_0,\lambda_0) \to \Wspace$ by $\psi(s) \coloneqq \Phi(s,U_c)$. Then $\psi(0) = U_c$ by definition of $\Phi$, while $\psi'(0) = z$ is \eqref{Phi derivative at U_c}. We also know that the flow of \eqref{R ODE} conserves momentum, whence $\psi(s) \in \Pmanifold_c$ for all $s \in (-\lambda_0,\lambda_0)$. Finally, the proof of Lemma~\ref{energy inequality lemma} shows that $\psi$ is $C^2$, and that $\eng \circ \psi$ has a strict local maximum at $0$. The result is now obtained by a possible reparameterization.
        \end{proof}
        \begin{remark}
            The properties of the curve in Lemma~\ref{initial data curve lemma} show that $U_c$ is \emph{not} a local minimizer of the constrained minimization problem
            \[
                \min{\brac*{ \eng(u) : u \in \Pmanifold_c}}.
            \]
        \end{remark}
        
    \subsection{Proof of the instability theorem}
        
        \begin{proof}
            Assume, to the contrary, that we do \emph{not} have instability. Then for every $\nu_0 > 0$, small enough for local existence from Assumption~\ref{abstract LWP assumption}, there exists a $0 < \nu < \nu_0$ such that solutions corresponding to initial data in $\tube_\nu^\Wspace$ exist globally in time and stay inside $\tube_{\nu_0}^\Wspace$. Fix such a $\nu_0$, which we also require to satisfy the hypotheses of the lemmas in this section.
            
            By the above reasoning, there exists a unique global in time solution
            \[
                u^s \in C^0([0,\infty),\tube_{\nu_0}^\Wspace)
            \]
            to \eqref{abstract Hamiltonian system}, with initial data $u^s(0) = \psi(s)$, for all $\abs{s}\ll 1$. Here, $\psi$ is the curve from Lemma~\ref{initial data curve lemma}. Since $\psi(s) \in \Pmanifold_c$ by Lemma~\ref{initial data curve lemma}\ref{curve on manifold}, the solutions $u^s$ all live on $\Pmanifold_c$ due to conservation of momentum.
            
            From Lemma~\ref{energy inequality lemma}, and conservation of energy, we obtain the inequality
            \[
                \eng(U_c) - \eng(\psi(s)) \leq \Lambda(u^s(t))\mathcal{S}(u^s(t))
            \]
            for all $\abs{s} \ll 1$ and $t \in [0,\infty)$. By choosing $\lambda_0 \leq 1$ in \eqref{flow domain}, we can assume that $\abs{\Lambda(u)} \leq 1$ for all $u \in \tube_{\nu_0}^\Wspace$. Thus
            \begin{equation}
                \label{S circ u^s lower bound}
                \abs{\mathcal{S}(u^s(t))} \geq \eng(U_c) - \eng(\psi(s)) > 0
            \end{equation}
            for all $0 < \abs{s} \ll 1 $ and $t \in [0,\infty)$, where the strict inequality stems from Lemma~\ref{initial data curve lemma}(iii). Moreover, by continuity, this implies that $\mathcal{S} \circ u^s$ does not change sign.
            
            Since $\hat{J} \colon \Dom(\hat{J}) \subset \Xspace^* \to \Xspace$ is a closed operator, we may view $\mathbb{D} \coloneqq \Dom(\hat{J})$ as a Banach space with the graph norm
            \[
                \norm{v}_{\mathbb{D}} \coloneqq \norm{v}_{\Xspace^*} + \norm{\hat{J}v}_\Xspace,
            \]
            and in this norm the map $u \mapsto J(u)$ is of class $C^0(\nbhdO \cap \Wspace; \Lin(\mathbb{D},\Xspace))$ by Assumption~\ref{abstract symplectic assumption}. It follows that the map $u \mapsto J(u)^*$ is in $C^0(\nbhdO \cap \Wspace; \Lin(\Xspace^*,\mathbb{D}^*))$. From \eqref{weak abstract Hamiltonian system} we obtain
            \[
                \frac{d}{dt} \jb{u^s(t),v} = -\jb{\nabla \eng(u^s(t)),J(u^s(t))v} = -\jb{J(u^s(t))^* \nabla E(u^s(t)),v}
            \]
            for every $v \in \mathbb{D}$. Now, since the embedding $\mathbb{D} \hookrightarrow \Xspace^*$ is dense, we have $\Xspace \hookrightarrow \mathbb{D}^*$. We can therefore view $u^s$ as a member of $C^0([0,\infty),\tube_{\nu_0}^\Wspace) \cap C^1((0,\infty),\mathbb{D}^*)$, with
            \[
                (u^s)'(t) = -J(u^s(t))^* \nabla E(u^s(t))
            \]
            for all $t \in (0,\infty)$. Furthermore, the functional $A$ in Lemma~\ref{existence of A lemma} can be viewed as a member of $C^1(\tube_{\nu_0}^\Wspace;\R)$, with the derivative $DA$ in $C^0(\tube_{\nu_0}^\Wspace; \mathbb{D})$. As the embedding $\Wspace \hookrightarrow \Xspace$ is dense and $\Wspace$ is reflexive, the embeddings $\Xspace^* \hookrightarrow \Wspace^*$ and $\mathbb{D} \hookrightarrow \Wspace^*$ are likewise dense.
            
            We may now apply \cite[Lemma 4.6]{grillakis1987stability1} to conclude that $A \circ u^s \in C^1([0,\infty),\R)$, and that
            \begin{align*}
                (A \circ u^s)'(t) &= -\jb{J(u^s(t))^* \nabla E(u^s(t)), DA(u^s(t))}\\
                &= - \jb{D E(u^s(t)),J(u^s(t))DA(u^s(t))} = \mathcal{S}(u^s(t)),
            \end{align*}
            whence
            \[
                \abs{A(u^s(t)) - A(\psi(s))} \geq t(E(U_c) - E(\psi(s)))
            \]
            for all $0 < \abs{s} \ll 1$ and $t \in [0,\infty)$ by \eqref{S circ u^s lower bound}. This shows that $A \circ u^s$ is unbounded, but we also have
            \[
                \abs{A(u)} \leq \norm{Z}_\Xspace \norm{M(u) - U_c}_\Xspace \leq \norm{Z}_\Xspace \norm{\iota_{\Wspace \hookrightarrow \Xspace}}\nu_0
            \]
            for every $u \in \tube_{\nu_0}^\Wspace$ by the definition of $A$ in \eqref{def A(u)}. We have arrived at a contradiction, and must conclude that the $U_c$-orbit is unstable.
        \end{proof}

\section{Hamiltonian structure for water waves with a point vortex}
    \label{point vortex formulation section}
    
    With our general machinery in place, we turn to the question of stability of solitary capillary-gravity waves with a submerged point vortex.  The next subsection recalls how this system was formulated by Shatah, Walsh, and Zeng in \cite{shatah2013travelling}.   In Section~\ref{point vortex hamiltonian section}, we show that the problem can be rewritten once more as an abstract Hamiltonian system of the general form \eqref{abstract Hamiltonian system}, and verify that the corresponding energy, momentum, Poisson map, symmetry group, and bound states meet the many requirements of Section~\ref{abstract formulation assumptions section}.
    
    \subsection{Nonlocal formulation}
        \label{nonlocal formulation section}
        Consider the capillary-gravity water wave problem with a point vortex described in \eqref{intro point vortex problem}.   In any simply connected subset of $\Omega_t \setminus \{\bar x\}$, the velocity ${v}$ can be decomposed as 
        \begin{equation}
            \label{splitting of v}
            v = \nabla \Phi + \epsilon\nabla \Theta,
        \end{equation}
        where $\Phi$ is harmonic on $\Omega_t$ and $\Theta$ is harmonic on the subset. The latter represents the vortical contribution of the point vortex. Since the surface is a graph, we will use $\Theta = \Theta_1 - \Theta_2$, where 
        \begin{align*}
            \Theta_1(x)&\coloneqq-\frac{1}{\pi} \arctan{\parn*{\frac{x_1-\bar{x}_1}{\abs{x-\bar{x}}+x_2-\bar{x}_2}}},\\
            \Theta_2(x)&\coloneqq\frac{1}{\pi} \arctan \parn*{\frac{x_1-\bar{x}_1}{\abs{x-\bar{x}'}-x_2-\bar{x}_2}}.
        \end{align*}
        Then $\Theta$ is harmonic on the open set $\brac{x \in \R^2 : x_1 \neq \bar{x}_1 \text{ or } \abs{x_2} < -\bar{x}_2}$, and $\nabla \Theta$ extends to a smooth velocity field on $\Omega_t \setminus \brac{\bar{x}}$. The purpose of $\Theta_2$, which corresponds to a mirror vortex at $\bar{x}' \coloneqq (\bar{x}_1,-\bar{x}_2)$, is to make $\nabla \Theta$ decay faster as $\abs{x} \to \infty$.  Indeed, $\nabla \Theta$ is $L^2$ on the complement of any neighborhood of $\bar{x}$ in $\Omega_t$.
         
        For later use, we also introduce notation for the harmonic conjugate of $\Theta$, which takes the form $\fund = \fund_1 - \fund_2$ with
        \begin{equation}
            \fund_1(x) \coloneqq \frac{1}{2\pi} \log{\abs{x-\bar x}}, \quad \fund_2(x)\coloneqq\frac{1}{2\pi} \log{\abs{x-\bar x'}}.
        \end{equation}
        Note that the convention used here is that $\nabla \Theta = \nabla^\perp \fund$, where we recall that $\nabla^\perp$ is the skew gradient introduced in \eqref{definition of vorticity}.
        
        The rationale behind splitting $v$ according to \eqref{splitting of v} is that it nearly decouples the tasks of determining the rotational and irrotational parts of the velocity.  Indeed, $\Theta$ is entirely explicit given $\bar{x}$, which solves the differential equation \eqref{euler point vortex motion}.  The main analytical challenge is determining $\Phi$ and $\eta$.  But for this we can proceed as in the classical Zakharov--Craig--Sulem formulation of the \emph{irrotational} water wave problem: Because $\Phi$ is harmonic, it is enough to know $\eta$ and the trace
        \[
            \varphi = \varphi(x_1) \coloneqq \Phi(x_1, \eta(x_1))
        \]
        of $\Phi$ on the surface. Notice that $\eta$ and $\varphi$ then have the fixed spatial domain $\R$. The problem can therefore be reduced to the boundary, with the rotational part $\nabla \Theta|_{S_t}$ being viewed as a forcing term.  
        
        On $S_t$, we must ensure that the kinematic condition and Bernoulli condition are satisfied.  Naturally, these will now involve tangential and normal derivatives of $\Phi$ and $\Theta$. Here and in the sequel, we will therefore make use of the shorthands
        \[
            \nonnorm \coloneqq \parn*{ -\eta^\prime \partial_{x_1}  + \partial_{x_2}  }|_{S_t}, \qquad \nontan  \coloneqq \parn*{ \partial_{x_1}  + \eta^\prime \partial_{x_2} }|_{S_t},
        \]
        which arise when parameterizing the free surface using $\eta$.   Note also that we are using the convention that spatial derivatives of quantities restricted to the boundary are denoted by a prime, while $\partial_{x_1}$ is reserved for functions of two or more spatial variables. Exceptions will be made for certain differential operators, when this does not cause ambiguities.
        
        The tangential derivative $\nontan\Phi$ is simply $\varphi'$, but to express the normal derivative $\nonnorm \Phi$ requires using the nonlocal Dirichlet--Neumann operator $\DN(\eta) \colon \dot{H}^{\regindex}(\R) \to \dot{H}^{\regindex-1}(\R)$, which is defined by
        \begin{equation}
            \label{def D-N}
            \DN(\eta)\phi\coloneqq \nonnorm (\mathcal{H}(\eta)\phi),
        \end{equation}
        where $\mathcal{H}(\eta)\phi \in \dot{H}^1(\Omega_t)$ is uniquely determined as the harmonic extension of $\phi \in \dot{H}^{\regindex}(\R) $ to $\Omega_t$.
        
        It is well known that, for any $\regindex_0 > 1$, $\regindex \in [1/2-\regindex_0,1/2+\regindex_0]$, and $\eta \in H^{\regindex_0+1/2}(\R)$, the operator $\DN(\eta)$ is an isomorphism. Moreover, the mapping $\eta \mapsto \DN(\eta)$ is analytic, $\DN(\eta) \colon \dot{H}^{1/2}(\R) \to \dot{H}^{-1/2}(\R)$ is self-adjoint, and $\DN(0) = \abs{\partial_{x_1}}$, the Calder\'on operator.  We refer the reader to \cite[Appendix A]{shatah2013travelling}, \cite[Section 6]{shatah2008geometry}, or \cite[Chapter 3 and Appendix A]{lannes2013book} for more details.  
        
        Finally, using the Dirichlet--Neumann operator, we can rewrite the water wave problem with a point vortex in terms of $(\eta, \varphi, \bar{x})$ as
        \begin{equation}
            \label{SWZ formulation}
            \left\{
            \begin{aligned}
                \partial_t \eta &= \DN(\eta)\varphi + \epsilon\nonnorm\Theta,\\
                \partial_t \varphi &=\begin{multlined}[t]-\frac{(\varphi')^2 -2\eta'\varphi'\DN(\eta)\varphi - (\DN(\eta)\varphi)^2}{2\jb{\eta'}^2}-g\eta + b\parn*{\frac{\eta'}{\jb{\eta'}}}' \\
                 -\epsilon\varphi'\Theta_{x_1}|_S - \frac{\epsilon^2}{2}\parn*{\abs{\nabla \Theta}^2}|_S+ \epsilon \xi|_S\cdot \partial_t \bar{x},
                \end{multlined}\\
                \partial_t \bar{x} &= \nabla \Phi(\bar{x}) -\epsilon \partial_{x_1}\Theta_2(\bar{x})e_1,
            \end{aligned}
            \right.
        \end{equation}
        where, to simplify the notation, we have introduced $\Xi \coloneqq \Theta_1 + \Theta_2$ and $\xi \coloneqq (\Theta_{x_1},\Xi_{x_2})$. The motivation for this choice being that $\nabla_{\bar{x}} \Theta = -\xi$.
        
        The first equation in \eqref{SWZ formulation} is simply the kinematic condition in \eqref{euler boundary condition}, while the second follows from evaluating Bernoulli's law along the free surface using the dynamic condition to replace the trace of the pressure with the (signed) curvature.  Finally, the third equation is just \eqref{euler point vortex motion} in view of the splitting \eqref{splitting of v}. Observe that $\partial_t \bar{x}$ can easily be eliminated from the equation for $\partial_t \varphi$, but we opt not to do so.
    
    \subsection{Hamiltonian formulation}
        \label{point vortex hamiltonian section}
        
        We now endeavor to rewrite \eqref{SWZ formulation} as a Hamiltonian system for the state variable $u = (\eta, \varphi, \bar{x})$.   The first step is to fix a functional analytic framework.  For that, we introduce the continuous scale of spaces
        \begin{equation}
            \label{def X sigma}
            \Xspace^\regindex = \Xspace_1^\regindex \times \Xspace_2^\regindex \times \Xspace_3 \coloneqq {H}^{\regindex+1/2}(\R) \times \parn*{\dot{H}^{\regindex}(\R) \cap \dot{H}^{1/2}(\R) } \times \R^2,  \qquad k \geq 1/2.
        \end{equation}
        For each $\regindex \geq 1/2$, $\Xspace^\regindex$ is a Hilbert space, and the embedding $\Xspace^k \hookrightarrow \Xspace^{k'}$ is dense for all $1/2 \leq k' \leq k$.
        
        For the energy space, we take 
        \begin{equation}
            \label{def point vortex X}
            \Xspace \coloneqq \Xspace^{1/2} = H^1(\R) \times \dot{H}^{1/2}(\R) \times \R^2,
        \end{equation}
        which has the space
        \[
            \Xspace^* = H^{-1}(\R) \times \dot{H}^{-1/2}(\R) \times \R^2
        \]
        as its dual, and for which the isomorphism  $I \colon \Xspace \to \Xspace^*$ takes the explicit form 
        \[
            I = \parn[\big]{1-\partial_{x_1}^2, \abs{\partial_{x_1}}, \Id_{\R^2}}.
        \]
        This choice for $\Xspace$ ensures that $\nabla \Phi \in L^2(\Omega_t)$, and therefore that the kinetic energy corresponding to the irrotational part of the velocity is finite.
        
        On the other hand, anticipating the Dirichlet--Neumann operator, we expect to need $k > 1$ in \eqref{def X sigma} to ensure that the energy is smooth.  With that in mind, set
        \begin{equation}
            \label{def point vortex V}
            \Vspace \coloneqq \Xspace^{\regV+} =  H^{3/2+}(\R) \times \parn*{\dot{H}^{1+}(\R) \cap \dot{H}^{1/2}(\R)}\times \R^2,
        \end{equation}
        where by $\Xspace^{\regV+}$ we mean $\Xspace^{\regV + s}$ for a fixed $0 < s \ll 1$. For the well-posedness space we use
        \begin{equation}
            \label{def point vortex W}
            \Wspace \coloneqq \Xspace^{\regW+} = H^{3+}(\R) \times \parn*{\dot H^{\regW+}(\R) \cap \dot H^{\frac{1}{2}}(\R)} \times \R^2.
        \end{equation}
        A local well-posedness result at this level of regularity was obtained for irrotational capillary-gravity water waves by Alazard, Burq, and Zuily \cite{alazard2011surface}.  While the Cauchy problem for \eqref{SWZ formulation} has not yet been studied, it is reasonable to suppose that local well-posedness will hold in the same space. In our setting, this is the minimal regularity required to have the traces of the velocity be Lipschitz on the surface.
        
        Note that our results hold with any smoother choice of $\Wspace$ as well.  We also mention that very recently Su \cite{su2018long} has obtained long time well-posedness results for gravity waves with a point vortex (corresponding to $b = 0$).  The Gagliardo--Nirenberg interpolation inequality yields the following for our choice of spaces.
        
        \begin{lemma}[Function spaces]
            \label{VWX meet assumptions lemma}
            Let $\Xspace$, $\Vspace$, and $\Wspace$ be defined by \eqref{def point vortex X}, \eqref{def point vortex V}, and \eqref{def point vortex W}, respectively. Then there exists a constant $C > 0$ and $\theta \in (0,1/4)$ such that Assumption~\ref{abstract interpolation assumption} is satisfied.
        \end{lemma}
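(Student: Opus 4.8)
The plan is to verify the interpolation inequality \eqref{abstract interpolation inequality} factor by factor, using that $\Xspace = \Xspace^{1/2}$, $\Vspace = \Xspace^{\regV+}$, and $\Wspace = \Xspace^{\regW+}$ are product spaces whose norms are equivalent to the sum of the norms of the three factors. Write $\Vspace = \Xspace^{1+s}$ and $\Wspace = \Xspace^{5/2+s}$ for the fixed small $s > 0$ from \eqref{def point vortex V}--\eqref{def point vortex W}, which we may take to satisfy $s < 1/4$. The key structural observation is that, relative to $\Xspace^{1/2}$, the regularity ``gap'' is the same for the first and second factors: in both cases the $\Vspace$-exponent sits a distance $\tfrac12 + s$ above the $\Xspace$-exponent and the $\Wspace$-exponent a distance $2+s$ above it (namely $3/2+s$ vs.\ $1$ vs.\ $3+s$ for the first factor, and $1+s$ vs.\ $1/2$ vs.\ $5/2+s$ for the second). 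Accordingly, set
\[
    \alpha \coloneqq \frac{1/2 + s}{2 + s}, \qquad \theta \coloneqq 1 - 3\alpha = \frac{1 - 4s}{4 + 2s},
\]
so that $\alpha \in (1/4, 1/3)$, $\theta \in (0, 1/4)$, $3(1-\alpha) = 2+\theta$, and $3\alpha = 1-\theta$.

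For the first factor, the Gagliardo--Nirenberg inequality --- or simply Plancherel together with H\"older's inequality on the Fourier side, using $\jb{\xi}^{2(3/2+s)} = (\jb{\xi}^2)^{1-\alpha}(\jb{\xi}^{2(3+s)})^{\alpha}$ with H\"older exponents $1/(1-\alpha)$ and $1/\alpha$ --- gives $\norm{\eta}_{H^{3/2+s}} \leq \norm{\eta}_{H^1}^{1-\alpha}\norm{\eta}_{H^{3+s}}^{\alpha}$, hence, after cubing,
\[
    \norm{\eta}_{H^{3/2+s}}^3 \leq \norm{\eta}_{H^1}^{2+\theta}\norm{\eta}_{H^{3+s}}^{1-\theta}.
\]
The same computation with homogeneous Sobolev norms (now $\abs{\xi}$ in place of $\jb{\xi}$, interpolating between $\dot H^{1/2}$ and $\dot H^{5/2+s}$, for which the same exponent $\alpha$ arises) yields $\norm{\varphi}_{\dot H^{1+s}}^3 \leq \norm{\varphi}_{\dot H^{1/2}}^{2+\theta}\norm{\varphi}_{\dot H^{5/2+s}}^{1-\theta}$, while $\norm{\varphi}_{\dot H^{1/2}}^3 = \norm{\varphi}_{\dot H^{1/2}}^{2+\theta}\norm{\varphi}_{\dot H^{1/2}}^{1-\theta}$ and $\abs{\bar x}^3 = \abs{\bar x}^{2+\theta}\abs{\bar x}^{1-\theta}$ are trivial. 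Since the second-factor norm of $\Vspace$ is $\norm{\varphi}_{\dot H^{1+s}} + \norm{\varphi}_{\dot H^{1/2}}$ and $\norm{\varphi}_{\dot H^{1/2}} \leq \norm{\varphi}_{\dot H^{5/2+s} \cap \dot H^{1/2}}$, adding the last two displayed bounds controls the cube of the $\Vspace$-norm of $\varphi$ by $\norm{\varphi}_{\dot H^{1/2}}^{2+\theta}\norm{\varphi}_{\dot H^{5/2+s}\cap\dot H^{1/2}}^{1-\theta}$, up to a constant.

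Finally I would reassemble the pieces: writing $\norm{u}_{\Vspace}^3 \leq C\bigl(\norm{\eta}_{H^{3/2+s}}^3 + \norm{\varphi}_{\dot H^{1+s}}^3 + \norm{\varphi}_{\dot H^{1/2}}^3 + \abs{\bar x}^3\bigr)$ via the elementary inequality $(\sum_{j} a_j)^3 \leq C_n \sum_j a_j^3$ for finitely many nonnegative reals, and then invoking the trivial bounds $\norm{u_j}_{\Xspace_j} \leq \norm{u}_{\Xspace}$ and $\norm{u_j}_{\Wspace_j} \leq \norm{u}_{\Wspace}$ for each factor, every term of the form $\norm{u_j}_{\Xspace_j}^{2+\theta}\norm{u_j}_{\Wspace_j}^{1-\theta}$ is dominated by $\norm{u}_{\Xspace}^{2+\theta}\norm{u}_{\Wspace}^{1-\theta}$. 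Summing the finitely many contributions gives \eqref{abstract interpolation inequality}, hence Assumption~\ref{abstract interpolation assumption}, with a suitably enlarged $C$. There is no genuine obstacle here; the only point requiring care is the bookkeeping of exponents so that a single $\theta$ works for all three factors simultaneously --- which is guaranteed precisely by the matching regularity gaps noted above, and which could in any case be arranged by taking $\theta$ to be the least favorable of the factorwise values.
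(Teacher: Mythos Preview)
Your proof is correct and follows exactly the approach the paper indicates: the paper merely states that ``the Gagliardo--Nirenberg interpolation inequality yields the following'' without further detail, and your argument is a careful unpacking of precisely that interpolation, componentwise on the product space, with the exponents tracked so that a single $\theta \in (0,1/4)$ works throughout. There is nothing to add.
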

        
        Lastly, recall that for the problem to be well-defined, the surface must lie between the point vortex at $\bar{x} \in \Omega_t$, and its mirror at $\bar{x}'$.  We therefore let
        \[
            \nbhdO \coloneqq \brac{u \in \Xspace : \bar{x}_2 < \eta(\bar{x}_1) < -\bar{x}_2},
        \]
        and seek solutions taking values in $\nbhdO \cap \Wspace$ at each time.
        
        We endow $\Xspace$ with symplectic structure by prescribing a Poisson map.  First, consider the linear operator $\hat J \colon \Dom(\hat J) \subset \Xspace^* \to \Xspace$ defined by 
        \begin{equation}
            \label{def hat J}
            \hat J \coloneqq
            \begin{pmatrix}
                0 & 1 & 0 & 0 \\
                -1 & 0 & 0 & 0 \\
                0 & 0 & 0 & \epsilon^{-1} \\
                0 & 0 & -\epsilon^{-1} & 0
            \end{pmatrix},
        \end{equation}
        with the natural domain
        \[
            \Dom(\hat J) \coloneqq \parn*{H^{-1}(\R) \cap \dot{H}^{1/2}(\R)} \times \parn*{H^1(\R) \cap \dot{H}^{-1/2}(\R)} \times \R^2.
        \]
        One can understand $\hat J$ as encoding the Hamiltonian structure for the point vortex and water wave in isolation.  To get the full system, we must incorporate wave-vortex interaction terms. For each $u \in \nbhdO \cap \Vspace$,  define
        \begin{equation}
            \label{def B(u)}
            \Lin(\Xspace) \ni B(u) \coloneqq \Id_{\Xspace} + \mathcal{K}(u),
        \end{equation}
        where $\mathcal{K}(u) \in \Lin(\Xspace)$ is the finite-rank operator given by
            \[
            \mathcal{K}(u) \dot w \coloneqq
            \begin{pmatrix}
                0 & 0 & 0 & 0\\ 
                -\epsilon \Xi_{x_2}|_S & \epsilon \Theta_{x_1}|_S & \epsilon \Theta_{x_1}|_S & \epsilon \Xi_{x_2}|_S \\
                0 & 1 & 0 & 0 \\
                -1 & 0 & 0 & 0
            \end{pmatrix} 
            \begin{bmatrix}
                \jb{\Theta_{x_1}|_S,\dot{\eta}} \\
                \jb{\Xi_{x_2}|_S,\dot{\eta}}  \\
                \dot{\bar{x}}_1 \\
                \dot{\bar{x}}_2
            \end{bmatrix},
        \]
        for all $\dot w \in \Xspace$.  The full Poisson map is formed, like in \eqref{abstract state dependent poisson map}, by composing $\hat J$ with $B(u)$:
        
        \begin{lemma}[Properties of $J$]
            \label{J meets assumptions lemma}
            For each $u \in \nbhdO \cap \Vspace$, the operator $J(u)\colon \Dom(\hat J) \subset \Xspace^* \to \Xspace$ is given by
            \begin{equation}
                \label{def J}
                J(u) \coloneqq B(u)\hat{J} =
                \begin{pmatrix} 
                    0 & 1 & 0 & 0 \\
                    -1 & {J}_{22} & {J}_{23} &  {J}_{24} \\
                    0 & {J}_{32} & 0 & \epsilon^{-1} \\
                    0 & {J}_{42} & -\epsilon^{-1} & 0
                \end{pmatrix},
            \end{equation}
            where the entries are given by
            \begin{align*}
                J_{22} &= -\epsilon \Xi_{x_2}|_S \jb{\placeholder,\Theta_{x_1}} + \epsilon \Theta_{x_1} \jb{\placeholder,\Xi_{x_2}|_S},\\
                J_{23} &= -\Xi_{x_2}|_S, \\
                J_{24} &=  \Theta_{x_1}|_S, \\
                J_{32} &= \jb{\placeholder,\Xi_{x_2}|_S},\\
                J_{42} &= -\jb{\placeholder,\Theta_{x_1}|_S},
            \end{align*}
            and Assumption~\ref{abstract symplectic assumption} is satisfied.
        \end{lemma}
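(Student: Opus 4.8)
The plan is to first confirm the explicit matrix \eqref{def J} by carrying out the composition $B(u)\hat J$ by hand — apply $\hat J$ to a general element $(\dot\eta^*,\dot\varphi^*,\dot{\bar x}_1^*,\dot{\bar x}_2^*)\in\Xspace^*$, then apply $\Id_\Xspace+\mathcal K(u)$ and collect terms to read off $J_{22},\dots,J_{42}$ — and then to run through the five parts of Assumption~\ref{abstract symplectic assumption} together with the standing requirement that $\hat J$ be closed. The matrix identity and conditions \ref{J densely defined assumption}, \ref{J injectivity assumption}, \ref{B bijective assumption} are essentially bookkeeping once the function-space setup is fixed.

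For density \ref{J densely defined assumption} and closedness of $\hat J$: the domain contains, say, all triples whose first two slots are Schwartz functions with Fourier support away from the origin and whose last slot is arbitrary in $\R^2$, and these are dense in $H^{-1}(\R)$ and $\dot H^{-1/2}(\R)$; closedness is checked componentwise, using that the intersection spaces built into $\Dom(\hat J)$ are exactly those making $\hat J$ land in $\Xspace$ (and, later, those making the pairings in the skew-adjointness step consistent). Injectivity \ref{J injectivity assumption} is immediate from the block structure of \eqref{def hat J}. For \ref{B bijective assumption}: $\mathcal K(u)$ has range contained in $\{0\}\times\linspan\{\Theta_{x_1}|_S,\Xi_{x_2}|_S\}\times\R^2$, hence is finite-rank, so $B(u)=\Id_\Xspace+\mathcal K(u)$ is Fredholm of index zero and it suffices to prove injectivity; and if $B(u)\dot w=0$, the first slot forces $\dot\eta=0$, which kills the pairings $\jb{\Theta_{x_1}|_S,\dot\eta}$ and $\jb{\Xi_{x_2}|_S,\dot\eta}$, the third and fourth slots then force $\dot{\bar x}=0$, and the second slot finally forces $\dot\varphi=0$.

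The substantive step is the regularity condition \ref{regularity of J}. Since $B(u)=\Id_\Xspace+\mathcal K(u)$, everything reduces to showing $u=(\eta,\varphi,\bar x)\mapsto\mathcal K(u)$ is $C^1$ from $\nbhdO\cap\Vspace$ into $\Lin(\Xspace)$ and from $\nbhdO\cap\Wspace$ into $\Lin(\Wspace)$, which in turn reduces to the dependence of the surface traces $\Theta_{x_1}|_S$ and $\Xi_{x_2}|_S$ on $(\eta,\bar x)$, since these are the only $u$-dependent ingredients of $\mathcal K(u)$ and they enter only through the continuous bilinear operations of pairing against $\dot\eta$ and of scalar multiplication. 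Here $\Theta_{x_1}$ and $\Xi_{x_2}$ are explicit, smooth on $\R^2\setminus\{\bar x,\bar x'\}$ and rational (hence smooth) in $\bar x$ there, so the task is to control the compositions $x_1\mapsto\Theta_{x_1}(x_1,\eta(x_1))$, etc., in $H^{3/2+}(\R)$ (respectively in $H^{3+}(\R)$ and the homogeneous spaces from the definition of $\Wspace$). The definition of $\nbhdO$ keeps the surface at positive distance from both $\bar x$ and $\bar x'$, and $\eta\in H^{3/2+}\hookrightarrow W^{1,\infty}$, so Moser-type composition estimates yield the required $C^1$ (in fact $C^\infty$) dependence, uniformly on bounded subsets of $\nbhdO$; the decay of $\Theta_{x_1}|_S,\Xi_{x_2}|_S$ needed for membership in the relevant (in particular homogeneous) Sobolev spaces comes from the cancellation between the vortex and its mirror image that was built into $\Theta$ and $\Xi$. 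I expect this composition estimate — getting spaces, decay, and uniformity simultaneously — to be the main obstacle, and it is really the only place where the higher regularity of $\Vspace$ and $\Wspace$ (rather than just $\Xspace$) is used.

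Finally, for skew-adjointness I would verify $\jb{J(u)v,w}+\jb{v,J(u)w}=0$ for $v,w\in\Dom(\hat J)$ directly from \eqref{def J}, pairing block by block. The $(1,1)$, $(3,3)$, $(4,4)$, $(1,3)$, $(1,4)$ blocks vanish; the $(3,4)$–$(4,3)$ block contributes $\epsilon^{-1}(w_3v_4-w_4v_3)$ and is manifestly antisymmetric; the $(2,3)$–$(3,2)$ and $(2,4)$–$(4,2)$ blocks pair up into expressions antisymmetric under $v\leftrightarrow w$; the diagonal entry $J_{22}$, being $\epsilon[\Theta_{x_1}|_S\jb{\placeholder,\Xi_{x_2}|_S}-\Xi_{x_2}|_S\jb{\placeholder,\Theta_{x_1}|_S}]$, contributes something visibly antisymmetric; and the $(1,2)$–$(2,1)$ blocks contribute $\jb{w_1,v_2}-\jb{w_2,v_1}$ from $J(u)v$ and $\jb{v_1,w_2}-\jb{v_2,w_1}$ from $J(u)w$, which cancel once one invokes that the $H^{-1}$–$H^1$ and $\dot H^{1/2}$–$\dot H^{-1/2}$ pairings agree on elements of $\Dom(\hat J)$ — the one genuinely structural input, and the reason $\Dom(\hat J)$ was defined with those intersections.
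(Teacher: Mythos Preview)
Your proposal is correct and follows essentially the same route as the paper's proof: density via the Sobolev density lemma, injectivity and closedness of $\hat J$ from its block form, bijectivity of $B(u)$ via the Fredholm-plus-injectivity argument for the finite-rank perturbation, and skew-adjointness read off from the explicit matrix. The one notable difference is emphasis: you flag the regularity condition \ref{regularity of J} as ``the main obstacle'' and sketch a composition-estimate argument, whereas the paper dismisses it as ``obvious from the definition \eqref{def B(u)}''; your caution is not misplaced, but since $\mathcal K(u)$ is finite-rank with range spanned by the explicit traces $\Theta_{x_1}|_S,\Xi_{x_2}|_S$ (smooth, rapidly decaying thanks to the mirror-vortex cancellation), the required $C^1$ dependence really is routine once one notes that $\eta\in H^{3/2+}\hookrightarrow W^{1,\infty}$ keeps the graph uniformly away from $\bar x,\bar x'$.
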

        \begin{proof}
            It is clear from its definition in \eqref{def hat J} that $\hat J$ is injective and closed, and its domain $\Dom(\hat J)$ is dense in $\Xspace^*$ by Lemma~\ref{sobolev density lemma}.  Thus parts \ref{J densely defined assumption} and \ref{J injectivity assumption} of Assumption~\ref{abstract symplectic assumption} hold.  Now, fix $u \in \nbhdO \cap \Vspace$ and consider the operator $B(u)$ given by \eqref{def B(u)}. The map $\mathcal{K}(u)$ has finite rank, so $B(u)$ is a compact perturbation of identity. In particular, $B(u)$ is Fredholm of index $0$. On the other hand, $B(u)$ is clearly injective, and thus it must be an isomorphism on $\Xspace$.  This proves part~\ref{B bijective assumption}.  The properties of the mapping $u \mapsto B(u)$ asked for in part~\ref{regularity of J} are obvious from the definition \eqref{def B(u)}.  Finally, the skew-adjointness of $J(u)$ is apparent from the formula \eqref{def J}.
        \end{proof}
        
        Next, we must determine the energy associated to a water wave with a point vortex.  Classically, the kinetic energy is given by  $\tfrac{1}{2}\int \abs{v(t)}^2 \, dx$.  To adapt this to the point vortex case, we use the splitting \eqref{splitting of v} and formally integrate by parts. This produces traces on $S_t$, plus terms at the vortex center. We neglect the singular one, corresponding to $\Gamma_1$, which is equivalent to removing the self-advection of the point vortex as in the Helmholtz--Kirchhoff model. Ultimately, this leads us to define the energy functional $\eng = \eng(u)$ to be 
        \begin{equation}
            \label{defE}
            \eng(u) \coloneqq \kinE(u)  + \potE(u),
        \end{equation}
        where
        \begin{equation}
            \label{defK}
            \begin{aligned}
                \kinE(u) &\coloneqq K_0(u) + \epsilon K_1(u) + \epsilon^2 K_2(u)\\
                &\coloneqq \frac{1}{2} \int_\R \varphi \DN(\eta) \varphi \, dx_1
                + \epsilon \int_\R \varphi \nonnorm \Theta \, dx_1
                +  \frac{1}{2} \epsilon^2 \parn*{\int_{\R} \sTheta \nonnorm \Theta \, dx_1 + \fund_2(\bar{x})} \\
            \end{aligned}
        \end{equation}
        is the kinetic energy, and
        \begin{equation}
            \label{defV}
            \potE(u) \coloneqq \int_\R \parn*{\frac{1}{2}g\eta^2 + b(\jb{\eta'}-1)}\,dx_1
        \end{equation}
        is the potential energy. Notice that $\potE$ depends solely on the surface profile. A similar procedure also shows that
        \begin{equation}
            \label{defP}
            \mom = \mom(u) \coloneqq \epsilon \bar{x}_2 - \int_\R \eta' \parn*{\varphi + \epsilon \Theta|_S } \, dx_1.
        \end{equation}
        is the momentum carried by a water wave with a submerged point vortex.
        
        It is easy to see that $\eng,\mom \in C^\infty(\nbhdO \cap \Vspace; \R)$.  For the convenience of the reader, the first and second Fr\'echet derivatives of $\eng$ and $\mom$ are recorded in Appendix~\ref{variations appendix}.  By inspection, we see that $D\eng$ and $D \mom$ admit the explicit extensions
        \begin{align}
            \label{formula nabla E}
            \nabla \eng(u) &\coloneqq (\eng_\eta'(u),\eng_\varphi'(u),\nabla_{\bar{x}}\eng(u)),\\
            \label{formula nabla P}
            \nabla \mom(u) &\coloneqq (\mom_\eta'(u),\mom_\varphi'(u),\nabla_{\bar{x}}\mom(u)),
        \end{align}
        in $C^\infty(\nbhdO \cap \Vspace; \Xspace^*)$, with 
        
        \begin{equation}
            \label{gradient of E}
            \begin{aligned}
                \eng_\eta'(u) &\coloneqq\begin{multlined}[t]\frac{(\varphi')^2 -2\eta'\varphi'\DN(\eta)\varphi - (\DN(\eta)\varphi)^2}{2\jb{\eta'}^2}+g\eta - b\parn*{\frac{\eta'}{\jb{\eta'}}}' \\
                +\epsilon\varphi'\Theta_{x_1}|_{S_t} + \frac{\epsilon^2}{2}\parn*{\abs{\nabla \Theta}^2}|_{S_t},
                \end{multlined}\\
                \eng_\varphi'(u) &\coloneqq \DN(\eta)\varphi +\epsilon \nonnorm \Theta,\\
                \nabla_{\bar{x}} \eng(u) &\coloneqq - \frac{1}{2}\epsilon^2 \int_{\R} \nonnorm(\Theta \xi)\,dx - \epsilon \int_{\R} \varphi \nonnorm \xi \,dx_1 -\epsilon^2 \partial_{x_1}\Theta_2(\bar{x})e_2,
            \end{aligned}
        \end{equation}
        and 
        \begin{equation}
            \label{gradient of P}
            \begin{aligned}
            \mom_\eta'(u) &\coloneqq \varphi' + \epsilon \Theta_{x_1}|_{S_t},\\
            \mom_\varphi'(u) &\coloneqq -\eta',\\
            \nabla_{\bar{x}} \mom(u)&\coloneqq \epsilon e_2 + \epsilon\int_{\R} \eta' \xi|_{S_t} \,dx_1.
            \end{aligned}
        \end{equation}
        Thus Assumption~\ref{extend DP and DE assumption} is indeed satisfied.
        
        The next lemma confirms that the Hamiltonian system for this choice of the energy and Poisson map corresponds to the water wave with a point vortex problem in \eqref{SWZ formulation}.  
        
        \begin{theorem}[Hamiltonian formulation]
            \label{hamiltonian theorem}
            A function $u \coloneqq (\eta, \varphi, \bar{x}) \in C^1([0,t_0); \Wspace \cap \nbhdO)$ is a solution of the capillary-gravity water wave problem with a point vortex \eqref{SWZ formulation} if and only if it is a solution to the abstract Hamiltonian system
            \begin{equation}
                \label{hamiltonian formulation}
                \frac{du}{dt}  = J(u) D\eng (u),
            \end{equation}
            where $J = J(u)$ is the Poisson map \eqref{def J} and $\eng$ is the energy functional defined in~\eqref{defE}.
        \end{theorem}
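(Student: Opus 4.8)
The plan is to reduce the claimed equivalence to a component-by-component identification of two vector fields. Since \eqref{SWZ formulation} and \eqref{hamiltonian formulation} are both first-order evolution equations for the same unknown $u = (\eta, \varphi, \bar{x})$, and since a ``solution'' on either side is a $C^1$-in-time curve in $\Wspace \cap \nbhdO$, it suffices to show that for every $u \in \Wspace \cap \nbhdO$ one has $\nabla\eng(u), \nabla\mom(u) \in \Dom(\hat{J})$ and that $J(u)\nabla\eng(u)$ agrees, entry by entry, with the right-hand side of \eqref{SWZ formulation}. The domain conditions amount to the mapping-property check that $\eng_\varphi'(u) \in H^1 \cap \dot{H}^{-1/2}$ and $\eng_\eta'(u) \in H^{-1} \cap \dot{H}^{1/2}$, and similarly for $\mom$; these follow from the explicit formulas \eqref{gradient of E}--\eqref{gradient of P}, the mapping properties of $\DN(\eta)$, and the fact that $\eta \in \Wspace \hookrightarrow W^{1,\infty}$, which makes all surface traces and products appearing in those formulas well-behaved. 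It is worth noting at the outset that the factor $B(u) = \Id_{\Xspace} + \mathcal{K}(u)$ in the Poisson map is precisely what makes $J(u) = B(u)\hat{J}$ into the \emph{explicit} form of \eqref{SWZ formulation}: the finite-rank operator $\mathcal{K}(u)$ is designed to reproduce the implicit coupling term $\epsilon\xi|_S \cdot \partial_t\bar{x}$ in the $\varphi$-equation once $\partial_t\bar{x}$ has been substituted from the $\bar{x}$-equation. Finally, once the vector fields coincide, conservation of $\eng$ along a $C^1$ solution is automatic from the skew-adjointness of $J(u)$ established in Lemma~\ref{J meets assumptions lemma}, and conservation of $\mom$ follows similarly, using that $J(u)\nabla\mom(u)$ generates the translation symmetry under which $\eng$ is invariant; so the two solution notions genuinely differ only in the identity of the vector fields.

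I would read $J(u)$ off from \eqref{def J} and write $\nabla\eng(u) = (\eng_\eta'(u), \eng_\varphi'(u), \nabla_{\bar{x}}\eng(u))$ as in \eqref{gradient of E}. Then the first row gives at once
\[
\big( J(u)\nabla\eng(u) \big)_1 = \eng_\varphi'(u) = \DN(\eta)\varphi + \epsilon\nonnorm\Theta,
\]
which is exactly the kinematic equation, the first line of \eqref{SWZ formulation}. The third and fourth rows give
\[
\big( J(u)\nabla\eng(u) \big)_3 = \jb{\eng_\varphi'(u),\, \Xi_{x_2}|_S} + \epsilon^{-1}\big(\nabla_{\bar{x}}\eng(u)\big)_2, \qquad \big( J(u)\nabla\eng(u) \big)_4 = -\jb{\eng_\varphi'(u),\, \Theta_{x_1}|_S} - \epsilon^{-1}\big(\nabla_{\bar{x}}\eng(u)\big)_1,
\]
which should equal $\partial_t \bar{x} = \nabla\Phi(\bar{x}) - \epsilon\partial_{x_1}\Theta_2(\bar{x})e_1$. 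The explicit mirror-vortex terms match immediately: by \eqref{gradient of E}, $\big(\nabla_{\bar{x}}\eng(u)\big)_2$ carries the summand $-\epsilon^2\partial_{x_1}\Theta_2(\bar{x})$, and $\epsilon^{-1}$ times it reproduces the $-\epsilon\partial_{x_1}\Theta_2(\bar{x})$ in the third line of \eqref{SWZ formulation}. What then remains for the vortex-motion equation is the identity expressing the irrotational velocity $\nabla\Phi(\bar{x})$ at the vortex center as the corresponding combination of $L^2(\R)$-pairings of $\varphi$, and of the vortex forcing terms, against the surface traces of derivatives of $\Theta$ and $\Xi$ --- precisely the combination obtained from the remaining summands of $\nabla_{\bar{x}}\eng(u)$ together with the pairings above.

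That reciprocity identity is the analytic heart of the proof, and I expect it to be the main obstacle. It is the content of the Hamiltonian derivation of Shatah, Walsh, and Zeng \cite{shatah2013travelling} (compare the formal computation of Rouhi and Wright \cite{rouhi1993hamiltonian}), and the natural way to establish it is a Green's-identity argument: since $\nabla\Theta_j = \nabla^\perp\fund_j$ with $\fund_1$ a fundamental solution of $-\Delta$ and $\fund_2$ harmonic on $\Omega_t$, one pairs $\nabla\Phi$ with $\nabla^\perp\fund_1$ and $\nabla^\perp\fund_2$ over $\Omega_t \setminus B_\delta(\bar{x})$ and passes to the limit $\delta \to 0$; the interior integral vanishes by harmonicity of $\Phi$, the small circle around $\bar{x}$ extracts $\nabla\Phi(\bar{x})$, and the boundary integral over $S_t$ collapses --- after using $\eng_\varphi'(u) = \nonnorm\Phi + \epsilon\nonnorm\Theta$ --- into the asserted pairings, with the self-interaction of the point vortex being exactly the $\fund_1$-contribution that was discarded when defining $\kinE$ in \eqref{defK}. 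The difficulty is making this boundary computation rigorous at the available regularity: $S_t$ is only a Lipschitz graph (so the traces of $\Phi$, $\nabla\Theta$, and their derivatives on $S_t$ make sense because $\eta \in \Wspace \hookrightarrow W^{1,\infty}$), the contributions at spatial infinity must be controlled using $\nabla\Phi \in L^2(\Omega_t)$ and the faster decay of $\nabla\Theta$ afforded by the mirror term $\fund_2$, and the logarithmic singularity at $\bar{x}$ must be excised cleanly.

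For the $\varphi$-equation, the second row of \eqref{def J} gives
\[
\big( J(u)\nabla\eng(u) \big)_2 = -\eng_\eta'(u) + J_{22}\,\eng_\varphi'(u) + J_{23}\big(\nabla_{\bar{x}}\eng(u)\big)_1 + J_{24}\big(\nabla_{\bar{x}}\eng(u)\big)_2.
\]
Substituting \eqref{gradient of E}, the term $-\eng_\eta'(u)$ produces exactly the Bernoulli quadratic, the $-g\eta$, the curvature term $b(\eta'/\jb{\eta'})'$, and the explicit vortex terms $-\epsilon\varphi'\Theta_{x_1}|_S - \tfrac{\epsilon^2}{2}\abs{\nabla\Theta}^2|_S$ that appear on the second line of \eqref{SWZ formulation}, all with the correct signs; and, inserting the formulas for $\big(J(u)\nabla\eng(u)\big)_3$ and $\big(J(u)\nabla\eng(u)\big)_4$ found above and already identified with $\partial_t\bar{x}$, the remaining three terms $J_{22}\,\eng_\varphi'(u) + J_{23}\big(\nabla_{\bar{x}}\eng(u)\big)_1 + J_{24}\big(\nabla_{\bar{x}}\eng(u)\big)_2$ collapse, by a short algebraic rearrangement using $\xi = (\Theta_{x_1}, \Xi_{x_2})$, into exactly $\epsilon\xi|_S\cdot\partial_t\bar{x}$. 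Assembling the four components then shows that $J(u)\nabla\eng(u)$ equals the right-hand side of \eqref{SWZ formulation}, which is the desired equivalence. Apart from the reciprocity identity discussed above, every step is a direct, if slightly lengthy, matching of explicit expressions.
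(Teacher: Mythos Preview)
Your proposal is correct and follows essentially the same approach as the paper: a component-by-component identification of $J(u)\nabla\eng(u)$ with the right-hand side of \eqref{SWZ formulation}, with the kinematic and Bernoulli equations following directly and the vortex-motion equation requiring a Green's-identity/limiting-circle argument. The paper carries out the latter by passing to the stream function $\Psi$ (the harmonic conjugate of $\Phi$) and computing $\int_{S_t} N\cdot(\Gamma_{x_1}\nabla\Psi - \Psi\nabla\Gamma_{x_1})\,dS$ via path independence and an explicit limit over circles $|x-\bar x|=r$, which is exactly the computation you sketch in terms of $\nabla\Phi$ and $\nabla^\perp\Gamma_j$; the two formulations are equivalent.
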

        \begin{proof}
            Written out more explicitly using \eqref{def J}, the Hamiltonian system \eqref{hamiltonian formulation} is
            \begin{equation}
                \label{point vortex problem}
                \left\{
                    \begin{aligned}
                        \partial_t \eta &= \eng_\varphi'(u)\\
                        \partial_t \varphi &= \begin{multlined}[t]-\eng_\eta'(u)\\
                        + \epsilon \xi|_{S_t} \cdot \parn*{\jb{\eng_\varphi'(u),\Xi_{x_2}}+\epsilon^{-1}\partial_{\bar x_2}\eng(u),-\jb{\eng_\varphi'(u),\Theta_{x_1}}-\epsilon^{-1}\partial_{\bar x_1}\eng(u)}
                        \end{multlined}\\
                        \partial_t \bar{x} &= \parn*{\jb{\eng_\varphi'(u),\Xi_{x_2}}+\epsilon^{-1}\partial_{\bar x_2}\eng(u),-\jb{\eng_\varphi'(u),\Theta_{x_1}}-\epsilon^{-1}\partial_{\bar x_1}\eng(u)}
                    \end{aligned}
                \right.
            \end{equation}
            Using \eqref{gradient of E}, we see that the first of Hamilton's equations is 
            \[
                \partial_t \eta=\DN(\eta)\varphi+\epsilon\nonnorm \Theta,
            \]
            which is equivalent to the kinematic boundary condition in \eqref{SWZ formulation}. Moreover, the equation for $\partial_t \varphi$ above agrees with the corresponding one in \eqref{SWZ formulation}, as the final term is simply $\epsilon \xi|_{S_t}\cdot\partial_t \bar{x}$ by the third equation in \eqref{point vortex problem}.
            
            Only the equation for the motion of the point vortex remains. Written out explicitly, we find that
            \begin{align*}
                \partial_t \bar{x}_2 &= \int_{\R} (\varphi \nonnorm \Theta_{x_1} - \Theta_{x_1}|_{S_t} \DN(\eta)\varphi)\,dx_1 + \frac{\epsilon}{2}\int_{\R} (\Theta|_{S_t} \nonnorm \Theta_{x_1} - \Theta_{x_1}|_{S_t} \nonnorm \Theta)\,dx_1\\
                &= \int_{S_t} N \cdot (\Gamma_{x_1} \nabla \Psi - \Psi\nabla\Gamma_{x_1})\,dS + \frac{\epsilon}{2}\int_{S_t} N \cdot (\Gamma_{x_1} \nabla \Gamma - \Gamma \nabla \Gamma_{x_1})\,dS,
            \end{align*}
            where $\Psi$ is the harmonic conjugate to $\Phi$ in $\Omega_t$ and $N$ is the outward-pointing unit normal. Now, owing to the fact that $\Gamma$ and $\Gamma_{x_1}$ are harmonic on $\R^2 \setminus \{\bar{x},\bar{x}'\}$, the final integral is equal to
            \[
                \int_{{x_2 = 0}} (\Gamma_{x_1}\Gamma_{x_2}- \Gamma \Gamma_{x_1 x_2})\,dx_1 = 0
            \]
            by path independence. Here we have used that $\Gamma = \Gamma_{x_1} = 0$ on $\{x_2 = 0\}$.
            
            On the other hand, we have the identity
            \[
                \int_{S_t} N \cdot (\Gamma_{x_1} \nabla \Psi - \Psi\nabla\Gamma_{x_1})\,dS =  \int_{\abs{x-\bar{x}}=r} N \cdot (\Gamma_{x_1} \nabla \Psi - \Psi\nabla\Gamma_{x_1})\,dS
            \]
            for all $0 < r \ll 1$. Notice that $\Gamma_2$ is harmonic in $\Omega_t$, so only $\Gamma_1$ contributes in the limit $r \to 0$. Setting $x -\bar{x}=re^{i\theta}$, under the natural identification, we have
            \begin{align*}
                \int_{\abs{x-\bar{x}}=r} \Gamma_{1,x_1} N \cdot \nabla \Psi\,dS &= \int_0^{2\pi} \frac{\cos(\theta)}{2\pi r}(\cos(\theta),\sin(\theta)) \cdot \nabla \Psi(\bar{x}+re^{i\theta})\,d\theta\\
                &= \frac{1}{4\pi} \int_0^{2\pi} (1+\cos(\theta/2),\sin(\theta/2))\cdot \nabla \Psi(\bar{x}+re^{i\theta})\,d\theta
            \end{align*}
            and
            \begin{align*}
                -\int_{\abs{x-\bar{x}}=r}\Psi N\cdot\nabla\Gamma_{1,x_1}\,dS &= \frac{1}{2\pi} \int_0^{2\pi} \cos(\theta)\frac{\Psi(\bar{x}+re^{i\theta})}{r}\,d\theta\\
                &=\frac{1}{4\pi} \int\limits_0^{2\pi} \int\limits_0^1 (1+\cos(\theta/2),\sin(\theta/2))\cdot \nabla \Psi(\bar{x}+tre^{i\theta})\,dt\,d\theta.
            \end{align*}
            
            In total, then,  
            \[
                \partial_t \bar x_2  = \lim_{r \to 0} \int_{\abs{x-\bar{x}}=r} N \cdot (\Gamma_{x_1} \nabla \Psi - \Psi\nabla\Gamma_{x_1})\,dS = \Psi_{x_1}(\bar{x}),
            \]
            and an essentially identical argument shows that
            \[
                \partial_t \bar{x}_1=-\partial_{x_2}\Psi(\bar x)-\epsilon \partial_{x_1}\Theta_2(\bar{x}).
            \]
            Recalling that $\Psi$ and $\Phi$ are harmonic conjugates, these two equations are equivalent to the vortex dynamics equation in \eqref{SWZ formulation}.
            
            Finally, conservation of the energy $\eng$ is immediate from the fact that $u$ is a $C^1$ solution of \eqref{hamiltonian formulation}. The conservation of momentum $P$ is simply a consequence of  \eqref{abstract T'(0) and P' identity}, which we verify below in Lemma~\ref{properties of T lemma}.  
        \end{proof}
    
    \subsection{Symmetry}
        
        Let $T = T(s)\colon \Xspace \to \Xspace$ be the one-parameter family of affine mappings given by 
        \begin{equation}
            \label{def symmetry T}  
            T(s)u \coloneqq (\eta(\placeholder-s),\varphi(\placeholder-s),\bar{x}+se_1),\qquad s \in \R,
        \end{equation}
        representing the invariance of the underlying system with respect to horizontal translations. The linear part of the family is
        \begin{equation}
            \label{def dT(s)}
            dT(s)u = (\eta(\placeholder-s),\varphi(\placeholder-s),\bar{x}), \qquad s \in \R,
        \end{equation}
        and the infinitesimal generator of $T$ is the affine operator
        \begin{equation}
            \label{def T'(0)}
            T'(0) = dT'(0) + T'(0)0 = -(\partial_{x_1}, \partial_{x_1},0) + (0,0,e_1),
        \end{equation}
        with domain $\Dom(T'(0)) = \Xspace^{3/2}$.
        
        \begin{lemma}[Properties of $T$]
            \label{properties of T lemma}
            The group $T(\placeholder)$ satisfies Assumption~\ref{abstract symmetry assumption}.   
        \end{lemma}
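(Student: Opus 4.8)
The plan is to check the conditions \ref{invariances}--\ref{T conserves energy} of Assumption~\ref{abstract symmetry assumption} in turn, using throughout that horizontal translation is an isometry of every Sobolev space in the scale $\Xspace^\regindex$, acts as the identity on the $\R^2$-factor, and commutes with every Fourier-multiplier operator. This disposes immediately of the flow property \ref{group flow property} (read off from \eqref{def symmetry T}--\eqref{def dT(s)}), unitarity and isometry \ref{unitary assumption}, strong continuity \ref{strong continuity}, and the invariance \ref{invariances} of $\Vspace$, $\Wspace$, and $I^{-1}\Dom(\hat J)$ (the last being a product of Fourier-multiplier spaces). The set $\nbhdO$ is invariant because translating $u$ by $s$ sends the vortex abscissa to $\bar x_1+s$ and replaces $\eta$ by $\eta(\placeholder-s)$, leaving $\eta(\bar x_1)$, $\bar x_2$, and $-\bar x_2$ unchanged. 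Since $T(s)0=(0,0,se_1)$, the affine part lies in $C^\infty(\R;\Wspace)$ and satisfies $\norm{T(s)0}_\Wspace=|s|=\norm{T(s)0}_\Xspace$, so \ref{affine bound assumption} holds with $\omega=\mathrm{Id}$. For energy conservation \ref{T conserves energy}, I would use that $\potE$ and $K_0=\tfrac12\int_\R\varphi\DN(\eta)\varphi\,dx_1$ are translation invariant, and that the function $\Theta$ (hence each of $K_1$, $K_2$) attached to $T(s)u$ is the horizontal translate of the one attached to $u$ --- both $\bar x$ and its mirror $\bar x'$ move by $se_1$ --- while $\fund_2(\bar x)$ depends only on $|\bar x-\bar x'|$.

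For the commutation identities \ref{commutativity assumption}, the relation $\hat J I\,dT(s)=dT(s)\,\hat J I$ and its infinitesimal version \eqref{abstract derivative commutation identity} hold because $\hat J$ and $I$ are block-diagonal constant-coefficient operators. The identity $dT(s)B(u)=B(T(s)u)dT(s)$ reduces, via $B(u)=\Id+\mathcal K(u)$, to $\mathcal K(T(s)u)\,dT(s)=dT(s)\,\mathcal K(u)$, which I would read off from the finite-rank formula for $\mathcal K$: the traces $\Theta_{x_1}|_S$ and $\Xi_{x_2}|_S$ built from $T(s)u$ are the $s$-translates of those built from $u$, and each pairing $\jb{\Theta_{x_1}|_S,\dot\eta}$, $\jb{\Xi_{x_2}|_S,\dot\eta}$ is invariant under simultaneous translation of both slots.

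The substantive part is \ref{T'(0) assumption}. The generator is $T'(0)=-(\partial_{x_1},\partial_{x_1},0)+(0,0,e_1)$ with dense domain $\Xspace^{3/2}$, and the difference quotients converge in $\Vspace$ (resp.\ $\Wspace$) precisely on the dense subspaces $\Dom(T'(0)|_\Vspace)=\Xspace^{2+}$ (resp.\ $\Dom(T'(0)|_\Wspace)=\Xspace^{7/2+}$). For $u$ in the former, the components $\varphi'+\epsilon\Theta_{x_1}|_S$, $-\eta'$, and $\epsilon e_2+\epsilon\int_\R\eta'\xi|_S\,dx_1$ of $\nabla\mom(u)$ lie in $\Dom(\hat J)$: $\varphi'$ and $\eta'$ carry the required regularity and vanish at frequency zero, which supplies the homogeneous low-frequency memberships, while $\Theta_{x_1}|_S$ is smooth and rapidly decaying since $\bar x$ and $\bar x'$ stay away from $S$. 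The crux is \eqref{abstract T'(0) and P' identity}, $T'(0)u=J(u)\nabla\mom(u)$, which I would verify by a direct computation from \eqref{def J}: in the first two rows the $\Theta_{x_1}|_S$ and $\Xi_{x_2}|_S$ terms cancel against the entries $J_{22},J_{23},J_{24}$ (using $\jb{-\eta',\Theta_{x_1}}$ and $\jb{-\eta',\Xi_{x_2}}$) to leave $(-\eta',-\varphi')$, while the last two rows collapse to $(1,0)=e_1$; this is exactly $T'(0)u$, and in particular recovers the conservation of $\mom$ asserted at the end of the proof of Theorem~\ref{hamiltonian theorem}. For the density condition \ref{range density} I would note $\Rng\hat J=(H^1\cap\dot H^{-1/2})\times(H^{-1}\cap\dot H^{1/2})\times\R^2$ and appeal to Lemma~\ref{sobolev density lemma}, which furnishes a class of functions dense in $\Xspace$ lying simultaneously in $\Rng\hat J$ and in $\Dom(T'(0)|_\Wspace)$.

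I expect the main obstacle to be the bookkeeping in the computation of $J(u)\nabla\mom(u)$ for \eqref{abstract T'(0) and P' identity}: keeping track of the signs and the placement of the mirror-vortex contributions, using that $\Xi=\Theta_1+\Theta_2$ while $\Theta=\Theta_1-\Theta_2$, so that the various $\Theta_{x_1}|_S$ and $\Xi_{x_2}|_S$ terms coming from $B(u)$ cancel exactly against the momentum gradient. A secondary, purely technical, issue is verifying the homogeneous (low-frequency) Sobolev memberships needed in \ref{T'(0) assumption} and \ref{range density}, which rely on the density statements of Lemma~\ref{sobolev density lemma} together with the fact that $\varphi'$ and $\eta'$ are spatial derivatives.
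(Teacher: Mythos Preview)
Your proposal is correct and follows essentially the same route as the paper's own proof: verify each item of Assumption~\ref{abstract symmetry assumption} directly from the explicit formulas \eqref{def symmetry T}--\eqref{def T'(0)}, \eqref{def hat J}, \eqref{def B(u)}, and \eqref{gradient of P}, invoking Lemma~\ref{sobolev density lemma} for the density in \ref{range density}. You actually spell out more detail than the paper does (notably for the commutation $dT(s)B(u)=B(T(s)u)dT(s)$ via the translation-covariance of $\Theta_{x_1}|_S$, $\Xi_{x_2}|_S$, and for the cancellations in $J(u)\nabla\mom(u)=T'(0)u$), where the paper simply says these follow ``by direct computation'' or ``by inserting the definitions.''
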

        \begin{proof}
            Parts \ref{invariances}, \ref{group flow property}, and \ref{unitary assumption} are obvious from the definition  of $T$. The strong continuity of the group in the respective spaces is likewise straightforward.  Observe also that $T(t)0 = t(0,0,e_1)$, which has norm $\abs{t}$ in both $\Xspace$ and $\Wspace$. Thus part~\ref{affine bound assumption} holds with $\omega(t) = t$.
            
            For part \ref{commutativity assumption}, note that $dT(s)$ is invariant on $I^{-1}\Dom(\hat{J})$, which is therefore the common domain of definition for both sides at the top of \eqref{abstract commutation identity}. Verifying that we have equality in the two equations for all $s \in \R$ is then just a matter of inserting the definitions. For part \ref{T'(0) assumption}, observe that $\Dom(T^\prime(0)|_{\Vspace}) = \Xspace^{2+}$.  That $\nabla P(u) \in \Dom(\hat J)$ for any $u \in \nbhdO \cap \Dom(T^\prime(0)|_{\Vspace})$ follows from its formula in \eqref{formula nabla P} and \eqref{gradient of P}.  Moreover, \eqref{abstract T'(0) and P' identity} and \eqref{abstract derivative commutation identity} can be obtained by direct computation.
            
            To verify part~\ref{range density}, note that
            \[
                \Rng{\hat{J}} =  (H^1(\R) \cap \dot{H}^{-1/2}) \times (H^{-1}(\R) \cap \dot{H}^{1/2}) \times \R^2, \qquad \Dom(T^\prime(0)|_{\Wspace}) = \Xspace^{7/2}, 
            \]
            so
            \[
                \Dom(T^\prime(0)|_{\Wspace}) \cap \Rng{\hat{J}} = (H^4(\R) \cap \dot{H}^{-1/2}) \times (H^{-1}(\R) \cap \dot{H}^{7/2}(\R) \cap \dot{H}^{1/2}) \times \R^2,
            \]
            which is certainly dense in $\Xspace$ (cf. Lemma~\ref{sobolev density lemma}). Finally, the conservation of energy under the group \ref{T conserves energy} is immediate given the translation invariant nature of $\eng$ in \eqref{defE}, \eqref{defK}, and \eqref{defV}.
        \end{proof}
        
    \subsection{Traveling waves}
        
        In Theorem \ref{existence theorem}, we prove the existence of a surface of small-amplitude traveling wave solutions of the point vortex problem, parameterized by the vortex strength $\epsilon$ and the depth of the point vortex $a$. For the stability analysis, however, it is important to fix $\epsilon$, as it appears as part of the equation.  We will therefore consider the families
        \begin{equation}
            \label{point vortex U_c}
            \mathscr{C}_{\mathcal{I}}^\epsilon \coloneqq \{U_{c(\epsilon,a)} \coloneqq (\eta(\epsilon,a), \varphi(\epsilon,a), -ae_2) : a \in \mathcal{I}\} \subset \nbhdO \cap \Wspace
        \end{equation}
        of traveling water waves with a point vortex of strength $\epsilon$ at $-ae_2$, traveling at speed $c(\epsilon,a)$, for nontrivial compact intervals $\mathcal{I} \subset (0,\infty)$ and $0 < \epsilon \ll 1$. From \eqref{asymptotic form waves} we see that $a \mapsto c(\epsilon,a)$ is a diffeomorphism onto its image when $\epsilon \neq 0$ is sufficiently small, which justifies viewing $\mathscr{C}_{\mathcal{I}}^\epsilon$ as being parameterized by the wave speed $c$.
        
        \begin{lemma}
            \label{Uc bound state lemma}
            For each nontrivial compact intervals $\mathcal{I} \subset (0,\infty)$ and $0 < \epsilon \ll 1$, the family $\mathscr{C}_\mathcal{I}^\epsilon$ satisfies Assumption~\ref{bound state assumption}.
        \end{lemma}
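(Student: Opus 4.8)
The plan is to verify the four requirements of Assumption~\ref{bound state assumption} in turn. That each member of $\mathscr{C}_\mathcal{I}^\epsilon$ is genuinely a bound state is immediate: by Theorem~\ref{existence theorem} the function $t \mapsto T(ct)U_{c(\epsilon,a)}$, with $T$ as in \eqref{def symmetry T}, solves \eqref{SWZ formulation}, hence by Theorem~\ref{hamiltonian theorem} it solves the abstract system \eqref{hamiltonian formulation}; it is a bound state with wave speed $c = c(\epsilon,a)$. It then remains only to check properties~\ref{bound states improved regularity}--\ref{non-periodic bound state assumption}.

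Two of these are essentially free consequences of the \emph{affine} part of the translation group, and I would dispatch them first. By \eqref{def T'(0)} we have $T'(0)U_c = (-\eta', -\varphi', e_1)$, whose third component is $e_1 \neq 0$, so $T'(0)U_c \neq 0$; this is part~\ref{bound state non-degeneracy} (equivalently, by \eqref{abstract T'(0) and P' identity}, $U_c$ is never a critical point of $P$). For part~\ref{non-periodic bound state assumption}, note that $T(s)U_c = (\eta(\placeholder-s), \varphi(\placeholder-s), -ae_2 + se_1)$, so
\[
    \norm{T(s)U_c - U_c}_\Xspace \geq \abs{s},
\]
which tends to $\infty$ as $\abs{s}\to\infty$; in particular $s \mapsto T(s)U_c$ is not periodic, and the $\liminf$ in part~\ref{non-periodic bound state assumption} is infinite.

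For part~\ref{bound states improved regularity}, I would invoke Theorem~\ref{existence theorem}, which produces $U_{c(\epsilon,a)} = (\eta(\epsilon,a), \varphi(\epsilon,a), -ae_2)$ with smooth---in particular $C^1$---dependence on the depth parameter $a$, taking values in $\nbhdO \cap \Wspace$. Since $a \mapsto c(\epsilon,a)$ is a diffeomorphism onto its image (see \eqref{asymptotic form waves}), composing with its inverse yields the required $C^1$ map $c \mapsto U_c \in \nbhdO \cap \Wspace$.

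The only item with real content is part~\ref{bound state domain assumption}. Here I would use that the profiles $\eta(\epsilon,a)$ and $\varphi(\epsilon,a)$, being small-amplitude solitary waves, are smooth and rapidly decaying; concretely (Theorem~\ref{existence theorem} together with a routine elliptic bootstrap) $U_c \in \Xspace^k$ for every $k \geq 1/2$. In particular $U_c$ lies in $\Dom(T'''(0))$ and in $\Dom(T'(0)|_\Wspace)$ (see the proof of Lemma~\ref{properties of T lemma} for the identification of these domains). Since $T'(0)U_c$ is again smooth and decaying, a direct inspection of the definition of $\Dom(\hat J)$ following \eqref{def hat J}---matching the first two components against the spaces $H^{-1}(\R)\cap\dot H^{1/2}(\R)$ and $H^1(\R)\cap\dot H^{-1/2}(\R)$ attached to $I = (1-\partial_{x_1}^2, \abs{\partial_{x_1}}, \Id_{\R^2})$---gives $IT'(0)U_c \in \Dom(\hat J)$, so $U_c \in \Dom(\hat J IT'(0))$; and $\hat J IT'(0)U_c$, obtained from smooth decaying data by the zeroth-order operator $\hat J$, again lies in $\Dom(T'(0)|_\Wspace)$. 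This is exactly \eqref{technical bound state assumption} and \eqref{second technical bound state assumption}.

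I do not expect a genuine obstacle: the substance has already been carried out in Theorem~\ref{existence theorem} (existence, parameter regularity, and---via bootstrapping---the extra smoothness and decay) and in Lemma~\ref{properties of T lemma} (identification of the domains of $T'(0)$, $T'''(0)$, and their restrictions). The one place requiring a little care is the verification in part~\ref{bound state domain assumption}, where one must track the homogeneous ($\dot H^{\pm 1/2}$) pieces built into the $\dot H^{1/2}(\R)$ component of $\Xspace$ and into $\Dom(\hat J)$; given the decay of the profiles this amounts to unwinding definitions.
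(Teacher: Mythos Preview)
Your proposal is correct and follows essentially the same approach as the paper. Two minor remarks: for part~\ref{bound state non-degeneracy} your observation that the third component of $T'(0)U_c$ is $e_1$ is actually cleaner than the paper's appeal to the asymptotics \eqref{asymptotic form waves}, since it works for all $\epsilon$ rather than just small ones; and for part~\ref{bound state domain assumption} no elliptic bootstrap is needed, since Theorem~\ref{existence theorem} is stated for arbitrary $k > 3/2$, so one simply runs the existence theory at a sufficiently high regularity index to place $U_c$ (and $dU_c/dc$) in all the required domains directly.
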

        \begin{proof}
            From the construction of $\mathscr{C}_{\mathcal{I}}^\epsilon$ in Theorem~\ref{existence theorem}, we know that the mapping $c \mapsto U_c$ is of class $C^1$. Since the existence theory can be carried out for any $\regindex > 3/2$, we can ensure that $U_c$ and $\frac{dU_c}{dc}$ satisfy Assumption~\ref{bound state assumption}\ref{bound state domain assumption}. Also, the non-degeneracy condition \ref{bound state non-degeneracy} holds for small enough $\epsilon$ in view of \eqref{asymptotic form waves}. Finally,
            \[
                \norm{T(s)U_c - U_c}_{\Xspace} \geq \abs{(se_1 - ae_2) - ae_2} = \abs{s},
            \]
            so the second option in \ref{non-periodic bound state assumption} holds.
        \end{proof}
        
        Formally, the traveling waves on $\mathscr{C}_\mathcal{I}^\epsilon$ are stable if we can show that the moment of instability defined in \eqref{abstract d definition} has positive second derivative. This can be shown to be the case when $\epsilon$ is small.
        \begin{lemma}
            \label{formal stability lemma}
            Fix a nontrivial compact interval $\mathcal{I} \subset (0,\infty)$. Then
            \[
                d''(c(\epsilon,a)) > 0, \qquad \text{for all $a \in \mathcal{I}$},
            \]
            when $0 < \abs{\epsilon} \ll 1$.
        \end{lemma}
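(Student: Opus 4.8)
The plan is to reduce everything to the scalar identity $d''(c) = -\tfrac{d}{dc}P(U_c)$. Indeed, \eqref{abstract d' identity} gives $d'(c) = -P(U_c)$, so that $d''(c) = -\tfrac{d}{dc}P(U_c)$ along the family. Since the asymptotic form \eqref{asymptotic form waves} produced by Theorem~\ref{existence theorem} shows that $a \mapsto c(\epsilon,a)$ is a $C^1$ diffeomorphism onto its image for $0 < \abs{\epsilon} \ll 1$ (this is precisely the fact invoked just before Lemma~\ref{Uc bound state lemma}), the chain rule yields
\[
    d''(c(\epsilon,a)) = -\frac{\partial_a\bigl[P(U_{c(\epsilon,a)})\bigr]}{\partial_a c(\epsilon,a)},
\]
and it suffices to determine the leading order in $\epsilon$ of numerator and denominator, uniformly for $a$ in the compact interval $\mathcal{I}$.

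For the numerator, I would substitute $U_{c(\epsilon,a)} = (\eta(\epsilon,a),\varphi(\epsilon,a),-ae_2)$ into the momentum formula \eqref{defP}, obtaining
\[
    P(U_{c(\epsilon,a)}) = -\epsilon a - \int_\R \eta(\epsilon,a)'\bigl(\varphi(\epsilon,a) + \epsilon\,\Theta|_S\bigr)\,dx_1 .
\]
According to \eqref{asymptotic form waves}, the surface quantities obey $\norm{\eta(\epsilon,a)}, \norm{\varphi(\epsilon,a)} = O(\epsilon^2)$ with $C^1$ dependence on $a$ and bounds uniform over $\mathcal{I}$, whereas $\Theta|_S$ and $\partial_a(\Theta|_S)$ are $O(1)$; hence $P(U_{c(\epsilon,a)}) = -\epsilon a + O(\epsilon^3)$ and, differentiating in $a$, $\partial_a[P(U_{c(\epsilon,a)})] = -\epsilon + O(\epsilon^3)$ uniformly in $a \in \mathcal{I}$. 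For the denominator, the same expansion gives $c(\epsilon,a) = -\tfrac{\epsilon}{4\pi a} + O(\epsilon^2)$ — the leading term being exactly the horizontal velocity induced at the vortex center by its mirror image at $(\bar x_1, a)$, namely $-\epsilon\,\partial_{x_1}\Theta_2$ evaluated there, which equals $-\tfrac{\epsilon}{4\pi a}$ for a vortex at depth $a$ — and therefore $\partial_a c(\epsilon,a) = \tfrac{\epsilon}{4\pi a^2} + O(\epsilon^2)$, again uniformly over $\mathcal{I}$. Dividing, one finds $d''(c(\epsilon,a)) = 4\pi a^2 + O(\epsilon)$ uniformly for $a \in \mathcal{I}$; since $4\pi a^2 \geq 4\pi(\min\mathcal{I})^2 > 0$ on the compact interval, positivity holds once $\abs{\epsilon}$ is small enough, and the $\epsilon^2$-homogeneity of the leading term makes this independent of the sign of $\epsilon$.

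The one genuinely delicate point is the justification that the expansion \eqref{asymptotic form waves} may be differentiated with respect to the parameter $a$ while keeping the error terms of the claimed order uniformly on $\mathcal{I}$; this is where the joint $C^1$ (in fact smooth) dependence of the wave family on $(\epsilon,a)$ furnished by the implicit-function-theorem construction in Theorem~\ref{existence theorem} is essential. It is also what guarantees that the quadratic-in-$\epsilon$ profiles contribute only an $O(\epsilon^3)$ term to $\partial_a P$, so that no spurious $O(\epsilon^2)$ correction competes with the dominant $-\epsilon$. Everything else is a routine interpolation-free estimate, since the relevant norms of $\eta$, $\varphi$, and their $a$-derivatives are already controlled at the $O(\epsilon^2)$ level by the existence theory.
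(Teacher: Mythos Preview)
Your proposal is correct and follows essentially the same route as the paper: both arguments convert $d''(c)$ into an $a$-derivative of the momentum via the chain rule, insert the asymptotic expansions \eqref{asymptotic form waves} (using that they hold in $C^1_{\mathrm{loc}}$ in $a$), and arrive at $d''(c(\epsilon,a)) = 4\pi a^2 + o(1)$. The only cosmetic difference is that the paper pairs $\nabla P(U_c)$ with $\partial_a U_c$ explicitly, whereas you compute the scalar $P(U_{c(\epsilon,a)}) = -\epsilon a + O(\epsilon^3)$ directly from \eqref{defP} and then differentiate; and because you quote the expansion of $c$ only to $O(\epsilon^2)$ rather than the sharper $O(\epsilon^3)$ available, your remainder in the final formula is $O(\epsilon)$ instead of $O(\epsilon^2)$, which is immaterial for the conclusion.
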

        \begin{proof}
            By \eqref{abstract d'' identity},
            \begin{equation}
                \label{point vortex d'' identity}
                \partial_ac(\epsilon,a)d''(c(\epsilon,a)) = - \jb{D\mom(U_{c(\epsilon,a)}),\partial_a U_{c(\epsilon,a)}},
            \end{equation}
            and from Appendix~\ref{existence theory appendix}
            \begin{align*}
                c(\epsilon,a_0) &= - \frac{1}{4\pi a}\epsilon + O(\epsilon^3)\\
                U_c(\epsilon,a_0) &= (0,0,-ae_2) + (\eta_2(a),0,0)\epsilon^2 + O(\epsilon^3)
            \end{align*}
            in $C^1(\mathcal{I};\Wspace)$. From the latter expression, and \eqref{gradient of P}, we find that
            \[
                \nabla \mom(U_{c(\epsilon,a)}) = (\Theta_{x_1}(\cdot,0),0,e_2)\epsilon - (0,\eta_2'(a),0)\epsilon^2 + O(\epsilon^3)
            \]
            in $C^0(\mathcal{I},\Xspace^*)$, and we can finally deduce from \eqref{point vortex d'' identity} that
            \[
                \parn*{\frac{1}{4\pi a^2}\epsilon + O(\epsilon^3)}d''(c(\epsilon,a))=\epsilon + O(\epsilon^3)
            \]
            or
            \[
                d''(c(\epsilon,a)) = 4\pi a^2 +O(\epsilon^2)
            \]
            in $C^0(\mathcal{I},\R)$. The right hand side is positive on $\mathcal{I}$ for sufficiently small $\epsilon \neq 0$.
        \end{proof}

\section{Stability of solitary waves with a point vortex}
    \label{point vortex spectrum section}
    
    In the previous section, we confirmed that the capillary-gravity water wave problem with a point vortex has a Hamiltonian formulation \eqref{hamiltonian formulation} that is invariant under the translation group $T(\placeholder)$ defined in \eqref{def symmetry T}, and we introduced the corresponding trio of Banach spaces $\Wspace \hookrightarrow \Vspace \hookrightarrow\Xspace$ in \eqref{def point vortex X}--\eqref{def point vortex W}.   We are now prepared to state and prove the main theorem:
    
    \begin{theorem}[Main theorem]
        \label{main point vortex theorem}
        Fix a nontrivial compact interval $\mathcal{I} \subset (0,\infty)$ and $0 < \abs{\epsilon} \ll 1$. Then the family $\mathscr{C}_\mathcal{I}^\epsilon$ of solitary capillary-gravity water waves with a submerged point vortex are conditionally orbitally stable in the sense of Theorem~\ref{abstract stability theorem}.
    \end{theorem}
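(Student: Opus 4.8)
The plan is to deduce the theorem from the abstract stability result, Theorem~\ref{abstract stability theorem}, by checking that all of its hypotheses hold for the family $\mathscr{C}_\mathcal{I}^\epsilon$. Almost all of this is already done: Assumption~\ref{abstract interpolation assumption} is Lemma~\ref{VWX meet assumptions lemma}; Assumption~\ref{abstract symplectic assumption} is Lemma~\ref{J meets assumptions lemma}; Assumption~\ref{extend DP and DE assumption} follows from the explicit extensions in \eqref{formula nabla E}--\eqref{gradient of P}; Assumption~\ref{abstract symmetry assumption} is Lemma~\ref{properties of T lemma}; Assumption~\ref{bound state assumption} is Lemma~\ref{Uc bound state lemma}; the elements of $\mathscr{C}_\mathcal{I}^\epsilon$ are genuine solutions of the Hamiltonian system \eqref{hamiltonian formulation} by Theorem~\ref{hamiltonian theorem}; and the convexity condition $d''(c)>0$ required to invoke Theorem~\ref{abstract stability theorem} is exactly Lemma~\ref{formal stability lemma}, valid for $0<\abs{\epsilon}\ll 1$. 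The one assumption that remains to be verified is the spectral one, Assumption~\ref{spectral assumptions}: that $\Hc=D^2\augHam(U_c)$ extends to a bounded operator $\Xspace\to\Xspace^*$ with $I^{-1}\Hc$ self-adjoint and with spectrum as in \eqref{spectrum of Hc}, namely one simple negative eigenvalue $-\mu_c^2$, a simple zero eigenvalue spanned by $T'(0)U_c$, and all remaining spectrum contained in $(0,\infty)$ and bounded away from $0$.

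The bounded extension and the self-adjointness of $I^{-1}\Hc$ can be read off from the second-variation formulas for $\eng$ and $\mom$ in Appendix~\ref{variations appendix} together with the mapping properties of the Dirichlet--Neumann operator; the real work is locating the spectrum. Here I would use that, by the asymptotics in Appendix~\ref{existence theory appendix} (see \eqref{asymptotic form waves}), for $0<\abs{\epsilon}\ll 1$ the wave $U_c$ lies within $O(\epsilon^2)$ in $\Wspace$ of the trivial configuration $U_\star\coloneqq(0,0,-ae_2)$, with $c=O(\epsilon)$. At $U_\star$ the leading part of $\Hc$ is block-diagonal: since $\kinE$ and $\potE$ reduce, when $\epsilon=0$ and $(\eta,\varphi)=0$, to expressions with no $\bar x$-dependence, the $(\eta,\varphi)$-block is the classical linearized capillary--gravity operator $\operatorname{diag}(g-b\partial_{x_1}^2,\,\abs{\partial_{x_1}})$ and the $\bar x$-block vanishes. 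Thus, to leading order, $I^{-1}\Hc$ is block-diagonal with $\eta$-block $(g-b\partial_{x_1}^2)(1-\partial_{x_1}^2)^{-1}$, whose spectrum is the closed interval between $\min(g,b)$ and $\max(g,b)$; a $\varphi$-block equal to the identity; and a $2\times 2$ zero block coming from the vortex slot. The unperturbed operator therefore has spectrum in $[0,\infty)$, with only a double eigenvalue at $0$ and everything else bounded away from $0$.

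It then remains to analyze how this double zero eigenvalue resolves once the full $O(\epsilon)$-perturbation --- the wave--vortex coupling terms, the $O(\epsilon^2)$ contribution of the wave profile, and $-c\,I^{-1}D^2\mom(U_c)$ --- is switched on. Since this perturbation is relatively compact with respect to the leading operator (the $\eta$-dependent pieces involve multiplication by localized profiles and hence are compact, and the coupling terms are finite rank), Weyl's theorem keeps the essential spectrum unchanged, so it stays in $(0,\infty)$ and away from $0$; the only spectrum that can move toward $0$ is a pair of isolated eigenvalues bifurcating from the double zero. One of them is pinned at $0$: differentiating the invariance $\augHam(T(s)u)=\augHam(u)$ and using that $U_c$ is a critical point of $\augHam$ by \eqref{abstract stationary equation} gives $\Hc T'(0)U_c=0$, while $T'(0)U_c\neq 0$ by Lemma~\ref{Uc bound state lemma}; a dimension count confirms the kernel is one-dimensional, so this eigenvalue is simple. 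The other bifurcating eigenvalue is strictly negative: from $\Hc\dUdc=\nabla\mom(U_c)$ in \eqref{D^2 E = DP relation} and $\jb{\Hc\dUdc,\dUdc}=-d''(c)=-4\pi a^2+O(\epsilon^2)<0$ via \eqref{abstract d'' identity} and Lemma~\ref{formal stability lemma}, together with the fact that $\dUdc$ is, to leading order, aligned with the remaining (vertical) vortex direction, a Rayleigh-quotient estimate shows the remaining eigenvalue equals $-\mu_c^2$ with $\mu_c^2=O(\epsilon^2)>0$. This yields the configuration \eqref{spectrum of Hc}, so Assumption~\ref{spectral assumptions} holds, Theorem~\ref{abstract stability theorem} applies, and the conditional orbital stability of $\mathscr{C}_\mathcal{I}^\epsilon$ follows.

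The main obstacle is precisely this spectral bookkeeping near the origin: verifying that the perturbation is genuinely relatively compact within the scale $\Wspace\hookrightarrow\Xspace$ so that no essential spectrum escapes toward $0$; showing that the double zero splits into exactly one zero eigenvalue and one negative eigenvalue, with no spurious third small eigenvalue; and carrying out the resulting finite-dimensional (Lyapunov--Schmidt) reduction uniformly in $a$ over the compact interval $\mathcal{I}$ and in the small parameter $\epsilon$. Everything else is an assembly of results already established.
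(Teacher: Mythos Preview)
Your proposal is correct and follows essentially the same route as the paper: assemble the already-verified Assumptions~\ref{abstract interpolation assumption}--\ref{bound state assumption}, then establish Assumption~\ref{spectral assumptions} by viewing $\Hc$ as a perturbation of the block-diagonal operator $\operatorname{diag}(g-b\partial_{x_1}^2,\,|\partial_{x_1}|,\,0)$, whose spectrum is $\{0\}$ (double) together with a positive part bounded away from zero, and then identify the two perturbed near-zero eigenvalues via $T'(0)U_c$ (pinning one at zero) and the negativity of $\jb{\Hc\,dU_c/dc,\,dU_c/dc}=-d''(c)$ (forcing the other negative). The only minor difference is that the paper treats the perturbation as \emph{small in operator norm} (since everything is $O(\epsilon)$), which is more direct than your appeal to relative compactness and Weyl's theorem; norm-smallness already controls the full spectrum, so no separate Lyapunov--Schmidt reduction is needed to rule out a spurious third small eigenvalue.
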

    
    It is important at this point to emphasize that the family $\mathscr{C}_{\mathcal{I}}^\epsilon$ comprises \emph{all} even traveling wave solutions of \eqref{SWZ formulation} with $(\eta, \varphi, c)$ and $\epsilon$ in a neighborhood of $0$, in a certain function space setting; see Appendix~\ref{existence theory appendix} for more details. Thus, the stability furnished by Theorem~\ref{main point vortex theorem} applies to \emph{all} even waves that are sufficiently small-amplitude, slow moving, and have small enough vortex strength.
    
    We have already addressed a number of the hypotheses of the general theory.  Moreover, Lemma~\ref{formal stability lemma} shows that the family $\mathscr{C}_{\mathcal{I}}^\epsilon$ is \emph{formally} orbitally stable for $0 < \abs{\epsilon} \ll 1$.  The only remaining task --- which is by far the most difficult --- is to verify that the waves in $\mathscr{C}_{\mathcal{I}}^\epsilon$ lie at a saddle point of the energy with a one-dimensional negative subspace, as required by Assumption~\ref{spectral assumptions}.   Our basic approach follows along the lines of Mielke's study of the irrotational case \cite{mielke2002energetic}, with many modifications necessitated by the presence of the point vortex.  
    
    Recall that the family of traveling waves $\{U_c\}$ are critical points of the augmented Hamiltonian $\augHam \coloneqq \eng - c\mom$. Because $\varphi$ occurs quadratically in $E$, and
    \[
        \jb{D_\varphi\augHam,\dot{\varphi}} = \int_{\R} \dot{\varphi}(\DN(\eta)\varphi+\epsilon\nonnorm\Theta + c\eta')\,dx_1,
    \]
    we can eliminate $\varphi$ by introducing
    \begin{equation}
        \label{varphi*formula}
        \begin{aligned}
            \varphi_*(v) &\coloneqq - \DN(\eta)^{-1}(c\eta' +\epsilon\nonnorm\Theta),\\
            u_*(v) &\coloneqq (\eta,\varphi_*(v),\bar{x}) \in \Vspace,
        \end{aligned}
    \end{equation}
    and the \emph{augmented potential}
    \begin{equation}
        \label{augV def}
        \augV(v) \coloneqq \min_{\varphi \in \Vspace_2}  \augHam(\eta, \varphi, \bar{x}) = \augHam(u_*(v)).
    \end{equation}
    for $v = (\eta, \bar{x}) \in \Vspace_{1,3} \cap \nbhdO_{1,3}$. Here
    \[
        \Vspace_{1,3} \coloneqq \Vspace_1 \times \Vspace_3, \qquad \nbhdO_{1,3} \coloneqq \{(\eta,\bar{x}) \in \Xspace_1 \times \Xspace_3 : \bar{x}_2 < \eta(\bar{x}_1) < - \bar{x}_2\}.
    \]
    Note that $\varphi_* \in C^\infty(\Vspace_{1,3} \cap \nbhdO_{1,3};\Xspace_2^{3/2+})$ and $u_* \in C^\infty(\Vspace_{1,3} \cap \nbhdO_{1,3};\Vspace)$, whence in particular $\augV \in C^\infty(\Vspace_{1,3};\R)$.
    
    For later use, we also define
    \[
        \mathfrak{a} = \mathfrak{a}(v) \coloneqq \parn*{\nabla{(\mathcal{H}\varphi_*)}}|_S, \qquad \mathfrak{b} = \mathfrak{b}(v) \coloneqq \mathfrak{a} + \epsilon \nabla \Theta|_S - ce_1.
    \]
    Thus  $\mathfrak{a}$ is the irrotational part of the velocity field, and $\mathfrak{b}$ is the relative velocity field, both restricted to the surface. Observe that $\mathfrak{b}_2 = \eta' \mathfrak{b}_1$ by \eqref{varphi*formula}.  Because we are working with the steady problem, in what follows we simply write $S$ rather than $S_t$.  
    
    \begin{lemma}
        \label{simplifiedaugVlemma}
        For all $v \in \Vspace_{1,3} \cap \nbhdO_{1,3}$ and $\dot{v} = (\dot{\eta},\dot{\bar{x}}) \in \Vspace_{1,3}$, we have
        \begin{multline}
            \label{simplifedaugVformula}
            \jb{D^2\augV(v)\dot{v},\dot{v}}_{\Vspace_{1,3}^* \times \Vspace_{1,3}} = \jb{D_v^2 \augHam(u_*(v))\dot{v},\dot{v}}_{\Vspace_{1,3}^* \times \Vspace_{1,3}}\\
            - \jb{\mathcal{L}(v)\dot{v},\DN(\eta)^{-1}\mathcal{L}(v)\dot{v}}_{\Xspace_2^* \times \Xspace_2},
        \end{multline}
        where
        \begin{equation}
            \label{def L operator}
            \mathcal{L}(v) \dot{v} \coloneqq \DN(\eta)(\mathfrak{a}_2 \dot{\eta}) + (\mathfrak{b}_1\dot{\eta})' + \epsilon \nonnorm \xi \cdot \dot{\bar{x}}
        \end{equation}
        defines a bounded linear operator $\mathcal{L}(v) \in \Lin(\Xspace_{1,3};\Xspace_2^*)$.
    \end{lemma}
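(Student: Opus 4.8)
The identity \eqref{simplifedaugVformula} is the Schur complement produced by partially minimizing the quadratic-in-$\varphi$ functional $\augHam$, and the plan is to establish it by a direct differentiation argument and then to identify the cross term explicitly. First I would record that, by \eqref{gradient of E} and \eqref{gradient of P}, the $\varphi$-gradient of $\augHam = \eng - c\mom$ is $D_\varphi\augHam(u) = \DN(\eta)\varphi + \epsilon\nonnorm\Theta + c\eta'$, so that the definition of $\varphi_*(v)$ in \eqref{varphi*formula} is precisely the Euler--Lagrange equation $D_\varphi\augHam(u_*(v)) = 0$; in particular $\varphi_*(v)$ is the unique critical point of $\varphi \mapsto \augHam(\eta,\varphi,\bar{x})$. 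Since $u_* \in C^\infty(\Vspace_{1,3}\cap\nbhdO_{1,3};\Vspace)$ and $\augHam \in C^\infty(\nbhdO\cap\Vspace;\R)$, the composition $\augV = \augHam\circ u_*$ is smooth, and differentiating with the chain rule and using $D_\varphi\augHam(u_*(v)) = 0$ yields the envelope identity $D\augV(v)\dot{v} = \jb{D_v\augHam(u_*(v)),\dot{v}}$ for all $\dot{v} \in \Vspace_{1,3}$.

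Next I would differentiate this identity once more, keeping careful track that $D_v\augHam(u_*(v))$ depends on $v$ both directly and through $\varphi_*(v)$, which gives
\[
    \jb{D^2\augV(v)\dot{v},\dot{v}} = \jb{D_v^2\augHam(u_*(v))\dot{v},\dot{v}} + \jb{D_vD_\varphi\augHam(u_*(v))\dot{v},\, D\varphi_*(v)\dot{v}}.
\]
Because the only quadratic-in-$\varphi$ contribution to $\augHam$ is $\tfrac12\int_\R \varphi\DN(\eta)\varphi\,dx_1$, one has $D_\varphi^2\augHam(u) = \DN(\eta)$, independent of $\varphi$, so differentiating the relation $D_\varphi\augHam(u_*(v)) \equiv 0$ in $v$ produces $D\varphi_*(v)\dot{v} = -\DN(\eta)^{-1}D_vD_\varphi\augHam(u_*(v))\dot{v}$. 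Substituting this and using the symmetry of the second Fréchet derivative converts the cross term into $-\jb{\mathcal{L}(v)\dot{v},\DN(\eta)^{-1}\mathcal{L}(v)\dot{v}}$ with $\mathcal{L}(v)\dot{v} \coloneqq D_vD_\varphi\augHam(u_*(v))\dot{v}$ --- the overall sign of $\mathcal{L}$ being immaterial since it enters only quadratically --- which is exactly \eqref{simplifedaugVformula} once $\mathcal{L}$ is matched with the operator in \eqref{def L operator}.

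For that matching I would expand $\mathcal{L}(v)\dot{v} = [d_\eta\DN(\eta)\dot{\eta}]\varphi_*(v) + \epsilon\, d_{(\eta,\bar{x})}[\nonnorm\Theta](\dot{\eta},\dot{\bar{x}}) + c\dot{\eta}'$. The first summand is handled by the shape-derivative formula for the Dirichlet--Neumann operator (see, e.g., \cite[Chapter 3]{lannes2013book}), which in the present notation reads $[d_\eta\DN(\eta)\dot{\eta}]\varphi_*(v) = -\DN(\eta)(\mathfrak{a}_2\dot{\eta}) - (\mathfrak{a}_1\dot{\eta})'$, with $\mathfrak{a}_1,\mathfrak{a}_2$ the horizontal and vertical traces of $\nabla(\mathcal{H}(\eta)\varphi_*(v))$ on $S$. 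For the $\nonnorm\Theta$ term one differentiates $\nonnorm\Theta = (-\eta'\Theta_{x_1}+\Theta_{x_2})|_S$ via the chain rule and $\nabla_{\bar{x}}\Theta = -\xi$, and then reassembles the result using the harmonicity of $\Theta$ in the form $\nontan\Theta_{x_1} + \nonnorm\Theta_{x_2} = (\Theta_{x_1x_1}+\Theta_{x_2x_2})|_S = 0$ together with the relation $\mathfrak{b}_2 = \eta'\mathfrak{b}_1$; the algebra then collapses to $\mathcal{L}(v)\dot{v} = \pm(\DN(\eta)(\mathfrak{a}_2\dot{\eta}) + (\mathfrak{b}_1\dot{\eta})' + \epsilon\nonnorm\xi\cdot\dot{\bar{x}})$. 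Finally, the boundedness $\mathcal{L}(v)\in\Lin(\Xspace_{1,3};\Xspace_2^*)$ would follow from the Sobolev mapping properties of $\DN(\eta)$ and of the harmonic extension $\mathcal{H}(\eta)$, one-dimensional product estimates (using $\eta\in\Vspace_1 = H^{3/2+}$, whence the surface velocities $\mathfrak{a},\mathfrak{b}$ lie in $H^{1/2+}$), and the smoothness and decay of $\Theta$ and $\xi$ away from the vortex, which is guaranteed on $\nbhdO$.

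I expect the main obstacle to be this last identification: one must invoke the correct form of the shape derivative of $\DN(\eta)$, and --- more delicately --- account for the fact that $\nonnorm\Theta$ depends on $\eta$ both through the geometric factor $\eta'$ and through the trace at $x_2 = \eta(x_1)$, so that several terms have to be recombined via the harmonicity of $\Theta$ before the clean structure $(\mathfrak{b}_1\dot{\eta})'$ emerges. Keeping track of homogeneous versus inhomogeneous Sobolev scales in the boundedness claim will also require a little care. Everything else is a routine application of the chain rule together with the second-derivative formulas for $\eng$ and $\mom$ recorded in Appendix~\ref{variations appendix}.
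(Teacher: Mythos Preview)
Your proposal is correct and follows essentially the same route as the paper: both differentiate $\augV = \augHam\circ u_*$ twice via the chain rule, use $D_\varphi\augHam(u_*(v))=0$ to drop the first-order term, convert the cross term using $D\varphi_*(v)\dot v = \DN(\eta)^{-1}\mathcal{L}(v)\dot v$ (the paper computes this directly from the defining relation for $\varphi_*$ rather than from $D_vD_\varphi\augHam$, but these are the same calculation up to the harmless overall sign you noted), and then identify $\mathcal{L}$ explicitly using the shape derivative of $\DN(\eta)$ together with the harmonicity identity $\nonnorm\Theta_{x_2} = -(\Theta_{x_1}|_S)'$. Your Schur-complement framing is a nice way to organize the argument; the paper presents it more tersely but with the same content.
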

    \begin{proof}
        By the  definitions of $\varphi_*$ in \eqref{varphi*formula} and $\augV$ in \eqref{augV def}, it follows that
        \[
            \jb{D\augV(v),\dot{v}} = \jb{D_\varphi \augHam(u_*(v)),\dot{v}} + \jb{D_v \augHam(u_*(v)),\dot{v}} = \jb{D_v \augHam(u_*(v)),\dot{v}},
        \]
        and
        \begin{align*}
            \jb{D^2\augV(v)\dot{v},\dot{v}} &= \jb{D_vD_\varphi \augHam(u_*(v))\jb{D\varphi_*(v),\dot{v}},\dot{v}} + \jb{D_v^2 \augHam(u_*(v))\dot{v},\dot{v}}\\
            &= - \jb{D_\varphi^2\augHam(u_*(v))\jb{D\varphi_*(v),\dot{v}},\jb{D\varphi_*(v),\dot{v}}}  + \jb{D_v^2 \augHam(u_*(v))\dot{v},\dot{v}},
        \end{align*}
        which yields the claimed formula after computing that
        \begin{align*}
            \DN(\eta) \jb{D\varphi_*(v),\dot{v}}&= -\jb{D_\eta \DN(\eta)\dot{\eta},\varphi_*(v)} + ([\epsilon \Theta_{x_1}|_S - c]\dot{\eta})'+\epsilon \nonnorm\xi \cdot \dot{\bar{x}}\\
            &=\DN(\eta)(\mathfrak{a}_2 \dot{\eta}) + (\mathfrak{b}_1\dot{\eta})' + \epsilon \nonnorm \xi \cdot \dot{\bar{x}}.
            \qedhere
        \end{align*}
    \end{proof}
    
    The next lemma further unpacks the expression \eqref{simplifedaugVformula} to obtain a quadratic form representation on the energy space, in preparation for the verification of Assumption~\ref{spectral assumptions}.
    \begin{lemma}[Extension of $D^2 \augV$]
        \label{characterization of augV lemma}
        For all $v \in \Vspace_{1,3} \cap \nbhdO_{1,3}$, there is a self-adjoint linear operator $A(v) \in \Lin(\Xspace_{1,3};\Xspace_{1,3}^*)$ such that
        \[
            \jb{D^2 \augV(v)\dot{v},\dot{w}}_{\Vspace_{1,3}^* \times \Vspace_{1,3}} = \jb{A(v)\dot{v},\dot{w}}_{\Xspace_{1,3}^* \times \Xspace_{1,3}}
        \]
        for all $\dot{v},\dot{w} \in \Vspace_{1,3}$.  Explicitly,
        \begin{equation}
            \label{defA}
            A = \begin{pmatrix}
                A_{11} & A_{13} \\
                A_{13}^* & A_{33}
            \end{pmatrix},
        \end{equation}
        with entries given by
        \begin{align*}
            A_{11}\dot{\eta} &\coloneqq \parn*{g + \mathfrak{b}_2'\mathfrak{b}_1}\dot{\eta} - \parn*{\frac{b}{\jb{\eta'}^3}\dot{\eta}'}' - \mathcal{M}\dot{\eta},\\
            A_{13} \dot{\bar{x}} &\coloneqq \epsilon \mathfrak{b}_1\nontan(\DN(\eta)^{-1}\nonnorm \xi-\xi) \cdot \dot{\bar{x}} \\
            A_{13}^*\dot{\eta} &\coloneqq \epsilon \int_{\R} \dot{\eta}\mathfrak{b}_1\nontan(\DN(\eta)^{-1}\nonnorm \xi-\xi) \,dx_1,\\
            A_{33} &\coloneqq D_{\bar{x}}^2 \augHam(u_*) + \epsilon^2 \int_{\R} \nonnorm\xi \odot \DN(\eta)^{-1} \nonnorm\xi\,dx_1.
        \end{align*}
        Here $\mathcal{M} \dot\eta\coloneqq - \mathfrak{b}_1(\DN(\eta)^{-1}(\mathfrak{b}_1 \dot\eta)')'$, $x \odot y \coloneqq (x \otimes y + y \otimes x)/2$ is the symmetric outer product, and an explicit expression for $D_{\bar{x}}^2 \augHam(u_*)$ is given in \eqref{explicit second variation bar x}.
    \end{lemma}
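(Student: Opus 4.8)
The plan is to begin from the polarized form of the identity in Lemma~\ref{simplifiedaugVlemma} and substitute explicit expressions for both terms on its right-hand side, organizing the outcome into the block structure \eqref{defA}. First I would insert the second variations of $\augHam = \eng - c\mom$ recorded in Appendix~\ref{variations appendix}, evaluated at $u = u_*(v)$, and retain only the $v = (\eta,\bar x)$ derivatives. This produces a pure $\eta\eta$ contribution (from the gravity and surface tension parts of $\potE$, together with the $\eta$-derivatives of $K_0, K_1, K_2$ and of the $\mom$-terms), a pure $\bar x\bar x$ contribution that is precisely $D_{\bar x}^2\augHam(u_*)$ as in \eqref{explicit second variation bar x}, and an $\eta\bar x$ cross contribution. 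The $\eta\eta$ part will still carry a ``nonlocal'' term in which $\DN(\eta)$ acts nontrivially; this is the piece that has to be reorganized.

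Second, I would expand $\jb{\mathcal L(v)\dot v, \DN(\eta)^{-1}\mathcal L(v)\dot v}$. Writing $\mathcal L(v)\dot v = \DN(\eta)(\mathfrak a_2\dot\eta) + \ell(\dot v)$ with $\ell(\dot v) \coloneqq (\mathfrak b_1\dot\eta)' + \epsilon\nonnorm\xi\cdot\dot{\bar x}$, and using that $\DN(\eta)$ (hence $\DN(\eta)^{-1}$) is self-adjoint on $\dot H^{1/2}$--$\dot H^{-1/2}$, this equals
\[
    \jb{\DN(\eta)(\mathfrak a_2\dot\eta),\mathfrak a_2\dot\eta} + 2\jb{\mathfrak a_2\dot\eta,\ell(\dot v)} + \jb{\ell(\dot v),\DN(\eta)^{-1}\ell(\dot v)}.
\]
Substituting the definition of $\ell$ splits this quantity into $\eta\eta$, cross, and $\bar x\bar x$ pieces as well; in particular the $\epsilon^2$ term $\jb{\nonnorm\xi,\DN(\eta)^{-1}\nonnorm\xi}$ that surfaces in $\jb{\ell,\DN(\eta)^{-1}\ell}$ is exactly the correction added to $D_{\bar x}^2\augHam(u_*)$ to build $A_{33}$, while the mixed term $2\epsilon\jb{\mathfrak a_2\dot\eta,\nonnorm\xi\cdot\dot{\bar x}}$ combines with the $\eta$-derivative of the $K_1,K_2$ terms of $\augHam$ to produce the off-diagonal entries $A_{13},A_{13}^*$, with the factor $\DN(\eta)^{-1}\nonnorm\xi-\xi$ appearing there.

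The core of the proof is the $\eta\eta$ bookkeeping, and I expect it to be the main obstacle: one must show that the nonlocal part of $D_v^2\augHam(u_*)$ cancels against $-\jb{\DN(\eta)(\mathfrak a_2\dot\eta),\mathfrak a_2\dot\eta}$ and $-2\jb{\mathfrak a_2\dot\eta,(\mathfrak b_1\dot\eta)'}$, up to local terms, leaving $\jb{A_{11}\dot\eta,\dot\eta}$ with $A_{11}$ exactly as stated --- i.e.\ isolating the $\DN(\eta)$-heavy contributions and recognizing the remainder as $\mathfrak b_2'\mathfrak b_1 - \mathcal M$. This uses the explicit formula for $\varphi_*$ in \eqref{varphi*formula}, the identity $\jb{D\varphi_*(v),\dot v} = \DN(\eta)^{-1}\mathcal L(v)\dot v$ established inside the proof of Lemma~\ref{simplifiedaugVlemma}, the chain rule for $\eta\mapsto\DN(\eta)$, and the relations $\mathfrak b = \mathfrak a + \epsilon\nabla\Theta|_S - ce_1$ and $\mathfrak b_2 = \eta'\mathfrak b_1$; the $\epsilon = 0$ skeleton of this calculation is Mielke's \cite{mielke2002energetic}, and only the $\epsilon$-dependent terms are genuinely new.

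It then remains to check that each block of $A(v)$ extends to a bounded operator $\Xspace_{1,3}\to\Xspace_{1,3}^*$: the top-order part of $A_{11}$ is the second-order operator $\dot\eta\mapsto (g+\mathfrak b_2'\mathfrak b_1)\dot\eta - (b\jb{\eta'}^{-3}\dot\eta')' - \mathcal M\dot\eta$, which maps $H^1\to H^{-1}$ since $v\in\Vspace_{1,3}$ forces $\eta\in H^{3/2+}$ (so that $\mathfrak a,\mathfrak b,\xi|_S$ are good enough multipliers) and $\DN(\eta)^{-1}$ gains a derivative; $A_{13},A_{13}^*$ are finite rank because the vortex variable lives in $\R^2$; and $A_{33}$ is a $2\times 2$ matrix. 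Self-adjointness of $A(v)$ is then automatic: $D^2\augV(v)$ is symmetric by Schwarz's theorem, $\Vspace_{1,3}$ is dense in $\Xspace_{1,3}$, and the bounded extension of a symmetric bilinear form is symmetric --- the block formula, with $A_{13}^*$ adjoint to $A_{13}$ and $A_{11},A_{33}$ self-adjoint, serving as a consistency check on the computation.
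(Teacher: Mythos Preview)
Your proposal is correct and follows essentially the same route as the paper: start from Lemma~\ref{simplifiedaugVlemma}, expand $\jb{\mathcal{L}(v)\dot v,\DN(\eta)^{-1}\mathcal{L}(v)\dot v}$ by splitting off the $\DN(\eta)(\mathfrak a_2\dot\eta)$ piece, compute $\jb{D_v^2\augHam(u_*)\dot v,\dot v}$ from the formulas in Appendix~\ref{variations appendix}, and subtract block by block. The paper also restricts to the diagonal by symmetry and presents the intermediate expressions for $\jb{D_\eta^2\augHam(u_*)\dot\eta,\dot\eta}$, $\nabla_{\bar x}\jb{D_\eta\augHam(u_*),\dot\eta}$, and the expanded $\mathcal{L}$-term separately before combining; your decomposition $\mathcal{L}\dot v = \DN(\eta)(\mathfrak a_2\dot\eta)+\ell(\dot v)$ is exactly the organizing device the paper's computation implicitly uses, and your remarks on boundedness and self-adjointness of $A(v)$ spell out what the paper leaves tacit.
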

    \begin{proof}
        Due to symmetry, it is sufficient to consider the diagonal. A series of rather lengthy, but direct, computations show that
        \begin{multline*}
            \int_{\R} (\mathcal{L}(v)\dot{v})\DN(\eta)^{-1}\mathcal{L}(v)\dot{v}\,dx_1 = \int_{\R} \mathfrak{a}_2 \dot{\eta} \DN(\eta)(\mathfrak{a}_2\dot{\eta})\,dx_1 + \int_{\R} \dot{\eta} \mathcal{M} \dot{\eta}\,dx_1\\
            + \int_{\R} (\mathfrak{a}_2\mathfrak{b}_1'-\mathfrak{a}_2'\mathfrak{b}_1)\dot{\eta}^2\,dx_1 + 2\epsilon \dot{\bar{x}} \cdot \int_{\R} (\mathfrak{a}_2\nonnorm\xi - \mathfrak{b}_1 (\DN(\eta)^{-1}\nonnorm \xi)')\dot{\eta}\,dx_1\\
            + \epsilon^2 \dot{\bar{x}}^T\parn*{\int_{\R} \nonnorm\xi \odot \DN(\eta)^{-1} \nonnorm\xi\,dx_1}\dot{\bar{x}},
        \end{multline*}
        while
        \begin{align*}
            \jb{D_\eta^2 \augHam(u_*)\dot{\eta},\dot{\eta}} &= \begin{multlined}[t]\int_{\R} \mathfrak{a}_2\dot{\eta}\DN(\eta)(\mathfrak{a}_2\dot{\eta})\,dx_1+ \int_{\R} \parn*{g + \epsilon\mathfrak{b}_1 \nontan \Theta_{x_2} + \mathfrak{a}_2\mathfrak{b}_1'}\dot{\eta}^2\,dx_1\\
            + \int_{\R} \frac{b}{\jb{\eta'}^3} (\dot{\eta}')^2\,dx_1,
            \end{multlined}\\
            \nabla_{\bar{x}}\jb{D_\eta\augHam(u_*),\dot{\eta}} &= \epsilon \int_{\R}\parn*{\mathfrak{a}_2\nonnorm\xi - \mathfrak{b}_1 \nontan \xi }\dot{\eta}\,dx_1,
        \end{align*}
        and
        \begin{multline}
            \label{explicit second variation bar x}
            D_{\bar{x}}^2 \augHam(u_*) =2\epsilon^2 D_x^2 \Gamma_2(\bar{x}) - \epsilon \int_{\R} (\DN(\eta)\varphi_* D_{\bar{x}}^2 \Theta + \varphi_*' D_{\bar{x}}^2 \Gamma)|_S\,dx_1 + \epsilon^2 \int_{\R} \nonnorm \xi \odot \xi \,dx_1\\
            -\frac{\epsilon^2}{2} \int_\R (\nonnorm{\Theta} D_{\bar{x}}^2\Theta + \nontan{\Theta} D_{\bar{x}}^2 \Gamma)|_S\,dx_1.
        \end{multline}
        Thus, using Lemma~\ref{simplifiedaugVlemma}, we find
        \begin{multline*}
            \jb{D^2\augV(v)\dot{v},\dot{v}} =\int_{\R} \parn*{g +\mathfrak{b}_2'\mathfrak{b}_1}\dot{\eta}^2\,dx_1- \int_{\R} \parn*{\frac{b}{\jb{\eta'}^3}\dot{\eta}'}'\dot{\eta}\,dx_1-\int_{\R} \dot{\eta} \mathcal{M} \dot{\eta}\,dx_1 \\
            + 2\epsilon \dot{\bar{x}} \cdot \int_{\R} \dot{\eta}\mathfrak{b}_1\nontan(\DN(\eta)^{-1}\nonnorm \xi-\xi) \,dx_1\\
            + \dot{\bar{x}}^T \parn*{D_{\bar{x}}^2 \augHam(u_*) -\epsilon^2 \int_{\R} \nonnorm\xi \odot \DN(\eta)^{-1} \nonnorm\xi\,dx_1}\dot{\bar{x}},
        \end{multline*}
        which yields the claimed operator $A$.
    \end{proof}
    \begin{remark}
        \label{A33 simplification remark}
        Under natural symmetry assumptions on $v$, the expression for $A_{33}$ can be simplified further. Specifically, if $\eta$ is even and $\bar{x}_1 = 0$, then
        \[
            A_{33} = 2\epsilon^2 D_x^2 \Gamma_2(\bar{x}) - \epsilon \int_{\R} (\DN(\eta)\varphi_* D_{\bar{x}}^2 \Theta + \varphi_*' D_{\bar{x}}^2 \Gamma)|_S\,dx_1+ \epsilon^2\int_{\R} \nonnorm{\xi} \odot (\xi - \DN(\eta)^{-1}\nonnorm \xi)\,dx_1,
        \]
        and all three terms are diagonal matrices.
    \end{remark}
    
    We can now confirm that the augmented Hamiltonian admits an extension to the energy space.
    
    \begin{lemma}[Extension of $D^2\augHam$]
        \label{hamiltonian extension lemma}
        For all $ v \in \Vspace_{1,3} \cap \nbhdO_{1,3}$, there is a self-adjoint operator $\Hc(v) \in \Lin(\Xspace,\Xspace^*)$ such that
        \begin{equation}
            \jb{D^2\augHam(u_*(v))\dot{u},\dot{w}}_{\Vspace^* \times \Vspace} = \jb{\Hc(v)\dot{u},\dot{w}}_{\Xspace^* \times \Xspace}
            \label{X* representation D^2 E}
        \end{equation}
        for all $\dot{u},\dot{w} \in \Vspace$. The operator is given by
        \[
            \Hc(v)\dot{u} =
            \begin{pmatrix}
                \Id_{\Xspace_1^*} & 0 & 0\\
                0 & 0 & \Id_{\Xspace_2^*}\\
                0 & \Id_{\R^2} & 0
            \end{pmatrix}
            \begin{pmatrix}
                A(v) + \mathcal{L}(v)^*\DN(\eta)^{-1}\mathcal{L}(v) &   -\mathcal{L}(v)^*\\
                -\mathcal{L}(v) & \DN(\eta)
            \end{pmatrix}
            \begin{bmatrix}
                \dot{v}\\
                \dot{\varphi}
            \end{bmatrix},
        \]
        where $\mathcal{L}(v)$ and $A(v)$ are as defined in Lemmas~\ref{simplifiedaugVlemma} and \ref{characterization of augV lemma}, respectively. The adjoint $\mathcal{L}(v)^* \in \Lin(\Xspace_2;\Xspace_{1,3}^*)$ is given by
        \[
            \mathcal{L}(v)^*\dot{\varphi} = \left(\mathfrak{a}_2 \DN(\eta)\dot{\varphi}-\mathfrak{b}_1 \dot{\varphi}',\epsilon\jb{\nonnorm\xi,\dot{\varphi}}\right),
        \]
        and we have
        \begin{equation}
            \label{Hc quadratic form formula}
            \jb{\Hc \dot{u},\dot{u}}_{\Xspace^* \times \Xspace} = \jb{A(v)\dot{v},\dot{v}}_{\Xspace_{1,3}^*\times\Xspace_{1,3}} + \jb{\DN(\eta)(\dot{\varphi} - \DN(\eta)^{-1}\mathcal{L}\dot{v}),(\dot{\varphi} - \DN(\eta)^{-1}\mathcal{L}\dot{v})}_{\Xspace_2^* \times \Xspace_2}
        \end{equation}
        for all $\dot{u} \in \Xspace$.
    \end{lemma}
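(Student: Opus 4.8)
The plan is to exploit the fact that $\augHam$ depends quadratically on the $\varphi$-slot, so that $D^2\augHam(u_*(v))$ has a transparent block structure once we reorder the variables as $(\dot v,\dot\varphi) = \big((\dot\eta,\dot{\bar x}),\dot\varphi\big)$. First I would record the three second partials of $\augHam$ at $u_*(v)$. Because the only $\varphi$-dependence of $\augHam$ sits in the quadratic term $\tfrac12\int_\R\varphi\DN(\eta)\varphi\,dx_1$ together with terms linear in $\varphi$, one has $D_\varphi^2\augHam(u_*(v)) = \DN(\eta)$, which is bounded $\Xspace_2\to\Xspace_2^*$ and self-adjoint by the properties of the Dirichlet--Neumann operator recalled in Section~\ref{nonlocal formulation section}. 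For the mixed partial, differentiating the defining relation $\DN(\eta)\varphi_*(v) = -(c\eta'+\epsilon\nonnorm\Theta)$ from \eqref{varphi*formula} in the $v$-direction — which is precisely the computation already performed in the proof of Lemma~\ref{simplifiedaugVlemma} — gives $\DN(\eta)\jb{D\varphi_*(v),\dot v} = \mathcal{L}(v)\dot v$; combined with $D_\varphi\augHam(u_*(v)) = 0$ this yields $D_vD_\varphi\augHam(u_*(v)) = -\mathcal{L}(v)$ and $D_\varphi D_v\augHam(u_*(v)) = -\mathcal{L}(v)^*$. Finally, polarizing \eqref{simplifedaugVformula} and invoking Lemma~\ref{characterization of augV lemma} (that $D^2\augV(v)$ extends to $A(v)\in\Lin(\Xspace_{1,3},\Xspace_{1,3}^*)$) identifies the remaining block: $D_v^2\augHam(u_*(v)) = A(v) + \mathcal{L}(v)^*\DN(\eta)^{-1}\mathcal{L}(v)$, which is bounded $\Xspace_{1,3}\to\Xspace_{1,3}^*$ since $\mathcal{L}(v)\in\Lin(\Xspace_{1,3},\Xspace_2^*)$ and $\DN(\eta)^{-1}\in\Lin(\Xspace_2^*,\Xspace_2)$.

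With the three blocks in hand, I would simply \emph{define} $\Hc(v)$ by the displayed block matrix, the leading permutation merely undoing the reordering of $(\dot\eta,\dot{\bar x},\dot\varphi)$ into $\big((\dot\eta,\dot{\bar x}),\dot\varphi\big)$. Each entry is a bounded operator between the appropriate energy-space factors, so $\Hc(v)\in\Lin(\Xspace,\Xspace^*)$; self-adjointness is checked block by block, using that $A(v)$ and $\DN(\eta)$ are self-adjoint, that $\DN(\eta)^{-1}$ is self-adjoint so that $\mathcal{L}(v)^*\DN(\eta)^{-1}\mathcal{L}(v)$ is symmetric, and that the off-diagonal entries $-\mathcal{L}(v)$ and $-\mathcal{L}(v)^*$ are mutual adjoints, the reordering being an isometric permutation compatible with the $\Xspace^*\times\Xspace$ pairing. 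The identity \eqref{X* representation D^2 E} is then verified on $\Vspace$ by expanding the bilinear form $D^2\augHam(u_*(v))[\dot u,\dot w]$ into the four pieces obtained by differentiating in $v$ or in $\varphi$ in each slot, and matching them term by term against $\jb{\Hc(v)\dot u,\dot w}$; the $(v,v)$ pieces agree precisely because $\jb{(A(v)+\mathcal{L}(v)^*\DN(\eta)^{-1}\mathcal{L}(v))\dot v,\dot w_v} = \jb{D_v^2\augHam(u_*(v))\dot v,\dot w_v}$, and uniqueness of the extension follows from density of $\Vspace$ in $\Xspace$.

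It remains to produce the two explicit formulas. The expression for $\mathcal{L}(v)^*$ is a one-line pairing computation: writing out $\jb{\mathcal{L}(v)\dot v,\dot\varphi}$ from \eqref{def L operator}, moving $\DN(\eta)$ onto $\dot\varphi$ by self-adjointness in the first term and integrating by parts in the second, one reads off $\mathcal{L}(v)^*\dot\varphi = \big(\mathfrak a_2\DN(\eta)\dot\varphi - \mathfrak b_1\dot\varphi',\,\epsilon\jb{\nonnorm\xi,\dot\varphi}\big)$. The quadratic-form identity \eqref{Hc quadratic form formula} follows by completing the square: in
\[
    \jb{\Hc(v)\dot u,\dot u} = \jb{A(v)\dot v,\dot v} + \jb{\DN(\eta)^{-1}\mathcal{L}(v)\dot v,\mathcal{L}(v)\dot v} - 2\jb{\mathcal{L}(v)\dot v,\dot\varphi} + \jb{\DN(\eta)\dot\varphi,\dot\varphi}
\]
the last three terms combine, using self-adjointness of $\DN(\eta)$, into $\jb{\DN(\eta)(\dot\varphi - \DN(\eta)^{-1}\mathcal{L}(v)\dot v),\,\dot\varphi - \DN(\eta)^{-1}\mathcal{L}(v)\dot v}$.

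I do not expect a genuine analytic obstacle here: the substantive work — boundedness of $\mathcal{L}(v)$ on the energy space and the extension of $D^2\augV$ — was already secured in Lemmas~\ref{simplifiedaugVlemma} and~\ref{characterization of augV lemma}, so this lemma is essentially assembly. The one point that needs care is the bookkeeping with the coordinate permutation: $\augHam$ is naturally a function of $(\eta,\varphi,\bar x)$, whereas the Legendre-type reduction is transparent only when $\varphi$ is singled out last, so the permutation relating the two orderings must be carried consistently through the block computation, the check of self-adjointness, and the matching of bilinear pieces in \eqref{X* representation D^2 E}.
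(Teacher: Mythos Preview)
Your proposal is correct and follows essentially the same approach as the paper: identify the three second-partial blocks of $D^2\augHam(u_*(v))$ using Lemmas~\ref{simplifiedaugVlemma} and~\ref{characterization of augV lemma} for the $(v,v)$-block, the relation $\DN(\eta)\jb{D\varphi_*(v),\dot v}=\mathcal{L}(v)\dot v$ (already established in the proof of Lemma~\ref{simplifiedaugVlemma}) for the mixed block, and $D_\varphi^2\augHam=\DN(\eta)$ for the $(\varphi,\varphi)$-block, then read off the matrix form and complete the square. The paper's proof is more terse---it restricts to the diagonal and leaves the $\mathcal{L}(v)^*$-formula and \eqref{Hc quadratic form formula} implicit---but the content is the same.
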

    \begin{proof}
        Again, we need only consider the diagonal. By Lemmas~\ref{characterization of augV lemma} and \ref{simplifiedaugVlemma} one has
        \begin{multline*}
            \jb{D^2\augHam(u_*(v))\dot{u},\dot{u}}_{\Vspace^* \times \Vspace} =\jb{A\dot{v} + \mathcal{L}(v)^*\DN(\eta)^{-1}\mathcal{L}(v)\dot{v},\dot{v}}_{\Xspace_{1,3}^*\times \Xspace_{1,3}}\\
            + \jb{D_\varphi^2 \augHam(u_*(v))\dot{\varphi}+2D_\varphi D_v \augHam(u_*(v))\dot{v},\dot{\varphi}}_{\Vspace_2^* \times \Vspace},
        \end{multline*}
        for all $\dot{u} \in \Vspace$, and it is simple to verify that
        \[
            \jb{D_\varphi D_v \augHam(u_*(v))\dot{v},\dot{\varphi}}_{\Vspace_2^* \times \Vspace_2} = - \jb{\mathcal{L}(v)\dot{v},\dot{\varphi}}_{\Xspace_2^* \times \Xspace_2}
        \]
        for all $\dot{v} \in \Vspace_{1,3}$ and $\dot{\varphi} \in \Vspace_2$.
    \end{proof}
    
    Using the representation for $D^2 \augV$ furnished by Lemma~\ref{hamiltonian extension lemma} in conjunction with the asymptotics derived in Appendix~\ref{existence theory appendix}, we are at last able to prove that Assumption~\ref{spectral assumptions} is satisfied.
    
    \begin{theorem}
        Let $\mathcal{I} \subset (0,\infty)$ be a nontrivial compact interval, and consider the family of bound states $\mathscr{C}_\mathcal{I}^\epsilon$ defined in \eqref{point vortex U_c}, furnished by Theorem~\ref{existence theorem}. Fix $0 < \abs{\epsilon} \ll 1$. Then the spectrum of $\Hc = H_{c(\epsilon,a)}(v(\epsilon,a))$ has the form
        \[
            \spectrum(I^{-1}\Hc) = \{-\mu_c^2\} \cup \{0\} \cup \Sigma_c
        \]
        for all $a \in \mathcal{I}$, with $-\mu_c^2 < 0$ and $0$ being simple eigenvalues, and $\Sigma_c \subset (0,\infty)$ bounded away from $0$.
    \end{theorem}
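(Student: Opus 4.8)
The plan is to combine the quadratic-form reduction of Lemma~\ref{hamiltonian extension lemma} with a perturbation analysis in the vortex strength $\epsilon$, the latter driven by the asymptotic expansions of $\mathscr{C}_{\mathcal{I}}^\epsilon$ from Appendix~\ref{existence theory appendix}.

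\emph{Reduction to the $(\eta,\bar x)$-block.} Denote by $v = v(\epsilon,a)$ the $(\eta,\bar x)$-part of $U_{c(\epsilon,a)}$. Formula \eqref{Hc quadratic form formula} shows that the map $\dot u = (\dot v,\dot\varphi)\mapsto(\dot v,\,\dot\varphi - \DN(\eta)^{-1}\mathcal{L}(v)\dot v)$ is a bounded automorphism of $\Xspace$, uniformly in the relevant parameter range, under which the quadratic form of $\Hc$ becomes the orthogonal sum $\jb{A(v)\dot v,\dot v}\oplus\jb{\DN(\eta)\tilde\varphi,\tilde\varphi}$. Since $\DN(\eta)$ is a small perturbation of $\abs{\partial_{x_1}}$ when $\eta$ is small, $\jb{\DN(\eta)\tilde\varphi,\tilde\varphi}\geq c_1\norm{\tilde\varphi}_{\dot H^{1/2}}^2$ uniformly; so by Sylvester's law of inertia the inertia of $I^{-1}\Hc$ equals that of $\tilde I^{-1}A(v)$, where $\tilde I := (1-\partial_{x_1}^2)\oplus\Id_{\R^2}$, and moreover $\jb{\Hc\dot u,\dot u}\geq\delta\norm{\dot u}_{\Xspace}^2$ whenever $\dot v$ lies in a subspace of $\Xspace_1\times\Xspace_3$ on which $A(v)$ is coercive. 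The theorem therefore reduces to showing that $\tilde I^{-1}A(v)$ has exactly one simple negative eigenvalue, a simple zero eigenvalue, and the rest of its spectrum in a fixed compact subset of $(0,\infty)$: then $A(v)$ is coercive off a two-dimensional subspace, and, since coercivity on a codimension-$2$ subspace limits to $2$ the total spectral multiplicity of $I^{-1}\Hc$ below $\delta$, the inertia count forces $-\mu_c^2$ and $0$ to be the only spectral points of $I^{-1}\Hc$ in $(-\infty,\delta)$, with $\Sigma_c := \spectrum(I^{-1}\Hc)\setminus\{-\mu_c^2,0\}\subset[\delta,\norm{I^{-1}\Hc}]$.

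\emph{Asymptotics of the blocks.} Substituting $\eta = O(\epsilon^2)$, $c = O(\epsilon)$ and $\varphi_* = O(\epsilon^3)$ --- the last because $\nonnorm\Theta$ vanishes on a flat surface by the image construction --- into the formulas of Lemma~\ref{characterization of augV lemma}, and using the mapping properties of $\DN(\eta)^{\pm1}$ on the homogeneous Sobolev scale together with the $\Wspace$-bounds on $\mathfrak{a}$ and $\mathfrak{b}$, one obtains, uniformly for $a\in\mathcal{I}$ and $0<\abs{\epsilon}\ll1$: $A_{11} = (g - b\partial_{x_1}^2) + O(\epsilon^2)$ in $\Lin(\Xspace_1,\Xspace_1^*)$, which is uniformly coercive, $A_{11}\geq c_0(1-\partial_{x_1}^2)$ with $c_0>0$ (using $\jb{(g-b\partial_{x_1}^2)\dot\eta,\dot\eta}\geq\min(g,b)\norm{\dot\eta}_{H^1}^2$ and estimating $\mathcal{M}$ via a $\dot H^{-1/2}$ bound on $(\mathfrak{b}_1\dot\eta)'$); $A_{13} = O(\epsilon^2)$ in $\Lin(\Xspace_3,\Xspace_1^*)$; and $A_{33} = \epsilon^2 M_0(a) + O(\epsilon^3)$ with $M_0(a) = \mathrm{diag}(m_{11}(a),m_{22}(a))$ diagonal by the evenness of the waves (Remark~\ref{A33 simplification remark}). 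Here $m_{11}(a)=0$: translation invariance gives $A(v)(-\eta',e_1) = 0$ (this being $T'(0)U_c$ after the reduction above), and since $\eta' = O(\epsilon^2)$ and $A_{13} = O(\epsilon^2)$, the $\bar x_1$-component of that identity forces $A_{33}e_1 = O(\epsilon^4)$. And $m_{22}(a)<0$: by \eqref{abstract d'' identity}, \eqref{Hc quadratic form formula}, and Lemma~\ref{formal stability lemma} we have $0 < d''(c(\epsilon,a)) = -\jb{\Hc\dUdc,\dUdc}$; since $(\dUdc)_{\bar x} = -\tfrac{4\pi a^2}{\epsilon}e_2 + O(\epsilon)$ while the remaining components of $\dUdc$ are $O(\epsilon)$, only the $(2,2)$-entry of $A_{33}$ and a matching $\DN(\eta)$-term survive at leading order, yielding a relation $4\pi a^2 + O(\epsilon^2) = -16\pi^2a^4(m_{22}(a) + \kappa(a)) + O(\epsilon)$ with $\kappa(a)\geq0$, hence $m_{22}(a)<0$ uniformly on $\mathcal{I}$ for small $\epsilon$.

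\emph{Assembling the spectrum.} Write $\tilde I^{-1}A(v) = (I_1^{-1}A_{11})\oplus A_{33} + R$, where $I_1 = 1-\partial_{x_1}^2$ and $\norm{R} = O(\epsilon^2)$ is the off-diagonal coupling. Since $\spectrum(I_1^{-1}A_{11})\subset[c_0,M]$ while $\spectrum(A_{33}) = \{O(\epsilon^3),\,\epsilon^2m_{22}(a)+O(\epsilon^3)\}$, the two small eigenvalues are separated from the $A_{11}$-spectrum by a gap $\gtrsim c_0\gg\norm{R}$, and, $R$ being off-diagonal, its compression to the spectral subspace of those two eigenvalues is $A_{33}$ up to $O(\epsilon^4)$. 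Thus $\tilde I^{-1}A(v)$ has exactly two eigenvalues near $0$, one of size $O(\epsilon^3)$ and one equal to $\epsilon^2m_{22}(a)(1+O(\epsilon))$; the former must be exactly $0$ (the kernel direction), the latter is a simple negative eigenvalue $-\mu_c^2<0$, and $\spectrum(\tilde I^{-1}A(v))\setminus\{-\mu_c^2,0\}\subset[c_0/2,2M]$. Feeding this back through the reduction step, $I^{-1}\Hc$ has a single simple negative eigenvalue $-\mu_c^2$, a simple zero eigenvalue generated by $T$ (since $T'(0)U_c\in\ker\Hc$ and $\dim\ker\Hc = 1$), and the rest of its spectrum in a fixed compact subset of $(0,\infty)$, which is the asserted decomposition. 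The genuinely laborious part is the middle step: although the two sign facts $m_{11}(a)=0$ and $m_{22}(a)<0$ come out cleanly from translation invariance and the formal stability computation of Lemma~\ref{formal stability lemma}, establishing the block expansions in the correct operator norms --- in particular tracking which remainders are truly $o(\epsilon^2)$, given that $\dUdc$ blows up like $\epsilon^{-1}$ in the vortex direction --- requires careful use of the mapping properties of the Dirichlet--Neumann operator and its inverse between the homogeneous Sobolev spaces, together with the smallness of $\eta$, $\varphi_*$, $\mathfrak{a}$, $\mathfrak{b}$ supplied by the $\Wspace$-regularity of the waves and the asymptotics of Appendix~\ref{existence theory appendix}.
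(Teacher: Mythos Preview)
Your argument is correct and reaches the same conclusion by the same two key observations --- translation invariance pins one small eigenvalue at zero, and the sign of $d''(c)$ forces the other to be negative --- but your route is considerably more roundabout than the paper's.

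The paper dispenses with the reduction to $A(v)$ entirely at this stage: it simply notes that $\Hc$ itself is, as an element of $\Lin(\Xspace,\Xspace^*)$, a small perturbation of the block-diagonal operator $\mathrm{diag}(g-b\partial_{x_1}^2,\abs{\partial_{x_1}},0)$, whose spectrum (after applying $I^{-1}$) is $\{0\}$ with multiplicity two together with a positive part bounded away from the origin. Standard perturbation of a bounded self-adjoint operator then gives exactly two eigenvalues near zero and the rest uniformly positive; $T'(0)U_c$ forces one of them to be exactly zero, and the existence of a negative direction $\tfrac{dU_c}{dc}$ (via $d''(c)>0$ and \eqref{abstract d'' identity}) forces the other to be strictly negative. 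That is the whole proof.

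Your detour through $\tilde I^{-1}A(v)$ and Sylvester's law is sound, and has the virtue of making transparent that the two small eigenvalues originate from the $\bar x$-block. But the explicit computation of the leading matrix $M_0(a)$ --- in particular deducing $m_{11}(a)=0$ and $m_{22}(a)<0$ from translation invariance and the $d''$ calculation --- is unnecessary: once you know there are \emph{exactly} two eigenvalues below the gap, the kernel and the negative direction identify them without any knowledge of their leading coefficients. The paper's argument uses precisely this shortcut, applied directly to $I^{-1}\Hc$ rather than to the reduced operator.
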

    \begin{proof}
        Under this hypothesis, we may view $\Hc$ as a small perturbation of the block diagonal operator
        \[
            \begin{pmatrix}
                g -b\partial_{x_1}^2 & 0 & 0\\
                0 & \abs{\partial_{x_1}} & 0\\
                0 & 0 & 0
            \end{pmatrix} \in \Lin(\Xspace,\Xspace^*),
        \]
        whose spectrum clearly consists of a part $\tilde{\Sigma} \subset (0,\infty)$ bounded away from $0$, plus the eigenvalue $0$ with multiplicity two. Thus the spectrum of $\Hc$ will have a part $\Sigma_c \subset (0,\infty)$ bounded away from $0$, plus two eigenvalues near the origin. We know that one of these is exactly $0$, with corresponding eigenvector $T'(0)U_c$.  Finally, from Lemma~\ref{formal stability lemma} and \eqref{abstract d'' identity} we see that $\frac{dU}{dc}$ is a negative direction for $\Hc$. Thus the other eigenvalue has to be negative.
    \end{proof}

    At this stage, we have completely verified that the myriad hypotheses of the abstract stability theory are satisfied for the solutions constructed in Appendix~\ref{existence theory appendix}. Theorem~\ref{main point vortex theorem}  therefore follows  immediately from Theorem~\ref{abstract stability theorem}.

\section{Stability for a class of dispersive model equations}
    \label{dispersive section}
    
    As a second illustration of the abstract theory, we devote this section to studying the stability properties of solitary wave solutions to the nonlinear dispersive PDE
    \begin{equation}
        \label{general model equation} 
        \partial_t u= \partial_x(\Lambda^\alpha u-u^p),
    \end{equation}
    where $u = u(t,x)\colon \R \times \R \to \R$ is the unknown, $\Lambda \coloneqq \abs{\partial_x}$, $\alpha \in (1/3,2]$, and 
    \begin{equation}
        \label{model p condition}
        p \in \N \cap \begin{cases}
            (1,(1+\alpha)(1-\alpha)^{-1}) & \alpha \in (1/3,1),\\
            (1,\infty) & \alpha \in [1,2].
        \end{cases}
    \end{equation}
    Heuristically, $\alpha$ describes the strength of the dispersion, while $p$ describes the strength of the nonlinearity.
    
    Equations of the general form \eqref{general model equation} include a number of extremely important hydrodynamical models. In particular, when $p=2$, the cases $\alpha = 1$ and $\alpha=2$ are known as the Benjamin--Ono equation (BO) and  Korteweg--de Vries equation (KdV), respectively. KdV, among many other things,  governs surface waves in shallow water. Benjamin--Ono models the motion of waves along the interface between two infinitely deep fluid regions in a certain long-wave regime \cite{benjamin1967internal,ono1975algebraic}.
    
    In \cite{bona1987stability}, Bona, Souganidis, and Strauss investigated the orbital stability and instability of solitary-wave solutions to \eqref{general model equation} for $\alpha \in [1,2]$.  Their strategy relied on many of the ideas underlying the GSS method.  However, as we will see below, the corresponding Poisson map $J$ was not surjective, and hence a number of adaptations were necessary.  Specifically, the authors made use of another conserved quantity --- the mass $\int u \, dx$ --- requiring them to obtain estimates on the spatial decay rates of solitary waves in order to ensure the persistence of integrability.  
    
    Our purpose in this section is to offer a new proof of the Bona, Souganidis, and Strauss theorem that follows directly from the stability machinery presented in Section~\ref{abstract stability section} and Section~\ref{abstract instability}.  Because we do not appeal to the mass, no asymptotic estimates are required. Notice also that we treat ``fractional'' dispersive model equations for which $\alpha \in (1/3,1)$.   Orbital stability results for $\alpha \in (1/2,1)$ have been obtained by Linares, Pilod, and Saut \cite{lineares2015remarks}, and Angulo Pava \cite{pava2018stability}; we discuss the connections between these works and the present paper further below.  Theorem~\ref{bss stability theorem} below gives conditional orbital instability for  fKdV ($p=2$) when $\alpha \in (1/3,1/2)$, and this appears to be new.  Indeed, Linares, Pilod, and Saut observe that the Bona, Souganidis, and Strauss approach \emph{almost} works in this regime, except that the tail estimates fail to hold.    
    
    While we do not pursue it here, one can also consider more general nonlinearities at the expense of some sharpness.  Another interesting possible extension is to study dispersive PDEs like the Whitham equation, where $\Lambda^\alpha$ in \eqref{general model equation} is replaced by a Fourier multiplier with an inhomogeneous symbol.         
    
    It is also important to note that, by specializing to specific choices of $\alpha$ and $p$, one can say much more.  As one example, for $\alpha = 2$ (gKdV), Pego and Weinstein \cite{pego1994asymptotic}, Mizumachi \cite{mizumachi2001largetime}, Martel and Merle \cite{martel2001asymptotic}, and Germain, Pusateri, and Rousset \cite{germain2016asymptotic} obtain asymptotic stability results (in different topologies) for various subcritical cases $p < 5$.  For supercritical waves $p > 5$, Jin, Lin, and Zeng \cite{jin2018dynamics} were able to completely classify the $H^1$ dynamics near the family of solitary waves using invariant manifold techniques.  The main appeal of our approach is its relative simplicity, and the fact that it simultaneously addresses the range of dispersion strengths $\alpha \in (1/3,2]$ and nonlinearities \eqref{model p condition}.
    
    \subsection{Reformulation as a Hamiltonian system}
        
        Formally, the expression inside the parentheses on the right-hand side of \eqref{general model equation} is the derivative of the energy
        \begin{equation}
            \label{model energy definition}
            \eng(u) \coloneqq \frac{1}{2} \int_{\R} (\Lambda^{\frac{\alpha}{2}} u)^2 \, dx - \frac{1}{p+1}\int_{\R} u^{p+1} \, dx.
        \end{equation}
        This suggests that the natural energy space is $\Xspace \coloneqq H^{\frac{\alpha}{2}}(\R)$, with the dual space $\Xspace^* = H^{-\frac{\alpha}{2}}(\R)$, and the isomorphism $I\colon\Xspace \to \Xspace^*$ given by $\jb{\Lambda}^\alpha$. The condition $\alpha > 1/3$ ensures the existence of admissible $p$, those satisfying \eqref{model p condition}, which in particular implies that $\Xspace \hookrightarrow L^{p+1}(\R)$. Observe that $E$ defined according to \eqref{model energy definition} then lies in $C^\infty(\Xspace;\R)$, and that indeed
        \[
            DE(u) =\Lambda^\alpha u - u^p
        \]
        for all $u \in \Xspace$.  We may therefore take
        \begin{equation}
            \label{model equation def V} 
            \Vspace \coloneqq \Xspace.
        \end{equation}
        
        The local and global well-posedness of the Cauchy problem for \eqref{general model equation} is still an active subject of research, and what is currently known depends considerably on $\alpha$ and $p$. To state things concisely, we suppose that \eqref{general model equation} is known to be locally well-posed in $H^{s}$ for $s > s_0 = s_0(\alpha,p)$, and set
        \begin{equation}
            \label{model equation def W}
            \Wspace \coloneqq \begin{cases}
                \Xspace & \text{if $\frac{\alpha}{2} > s_0(\alpha,p)$},\\
                H^{s_0+}(\R) & \text{if $\frac{\alpha}{2} \leq s_0(\alpha,p)$}.
            \end{cases}
        \end{equation}
        At present, the best known result when $p=2$ is $s_0(\alpha, 2) = 3/2-5\alpha/4$,
        and hence \eqref{general model equation} is \emph{globally} well-posed in $\Xspace$ when $\alpha > 6/7$ and $p=2$; see \cite{Molinet2015,Molinet2018}.  This is conjectured to hold for all $\alpha > 1/2$, which corresponds to the $L^2$ subcritical case.  
        
        However, for fKdV with $\alpha \in (1/3,6/7]$, the functional analytic setup in \eqref{model equation def W} will lead to a conditional stability or instability result.   This is essentially what is done by Angulo Pava in \cite[Theorem 1.1]{pava2018stability}, as well as Linares, Pilod, and Saut in \cite[Theorem 2.14]{lineares2015remarks}, who treat the range $\alpha \in (1/2,1)$.     We caution, however, that in both of these papers the definition of ``conditional stability'' is less conditional than ours: we require the solution to remain in the ball $\mathcal{B}_R^\Wspace$, while they only ask for it to exist.
        
        Next, define the Poisson map $J \colon \Dom(J) \subset \Xspace^* \to \Xspace$ by
        \begin{equation}
            \label{model J}
            J \coloneqq \partial_x,
        \end{equation}
        with domain $\Dom(J) \coloneqq H^{1+\frac{\alpha}{2}}(\R)$.  As $J$ is independent of state, it can be identified with $\hat J$ in Assumption~\ref{abstract symplectic assumption}. Moreover, $J$ is clearly injective, and skew-adjoint. The Cauchy problem for \eqref{general model equation} can now be restated rigorously as the abstract Hamiltonian system
        \begin{equation}
            \label{model hamiltonian equation}
            \frac{d}{dt} \jb{u(t),w}= \jb{u^p-\Lambda^\alpha u,\partial_x w} \quad \text{for all } w \in H^{1+\frac{\alpha}{2}}(\R),\quad u(0)=u_0,
        \end{equation}
        by specializing the general system in \eqref{weak abstract Hamiltonian system}.
        
        The equation \eqref{general model equation} possesses a number of symmetries, but the one of most interest to us is spatial translation invariance. For each $s \in \R$, we define $T(s) \in \Lin(\Xspace)$ by
        \begin{equation}
            \label{model T definition}
            T(s) u \coloneqq u(\placeholder-s),
        \end{equation}
        and this forms a group of unitary operators on $\Xspace$. Its infinitesimal generator is $T'(0)=-\partial_x$, with domain $\Dom(T'(0)) = H^{1+\frac{\alpha}{2}}(\R)$. Moreover, since $T'(0) = J(-\iota_{\Xspace \to \Xspace^*})$, the group generates the momentum
        \begin{equation}
            \label{model momentum}
            \mom(u) \coloneqq  \frac{1}{2}\jb{-u,u} = -\frac{1}{2} \int_{\R} u^2 \, dx,
        \end{equation}
        which also is of class $C^\infty(\Xspace; \R)$.
        
        The next lemma collects and expands upon these observations to confirm that the Hamiltonian formulation meets the requirements of the general theory.  
        
        \begin{lemma}
            \label{model hamiltonian lemma}
            The Hamiltonian formulation \eqref{model hamiltonian equation} of the dispersive model equation \eqref{general model equation} satisfies Assumptions~\ref{abstract interpolation assumption}-\ref{abstract symmetry assumption}.
        \end{lemma}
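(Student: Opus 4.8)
The plan is to verify \Cref{abstract interpolation assumption,abstract symplectic assumption,extend DP and DE assumption,abstract symmetry assumption} in turn, taking advantage of the fact that here $\hat J = \partial_x$ is a Fourier multiplier, $B(u) \equiv \Id_{\Xspace}$ is state-independent, $\Vspace = \Xspace$, the symmetry group $T(s)$ is linear, and $T(s)0 = 0$. For \Cref{abstract interpolation assumption}, since $\Vspace = \Xspace$ the inequality \eqref{abstract interpolation inequality} reduces to $\norm{u}_{\Xspace}^{1-\theta}\lesssim\norm{u}_{\Wspace}^{1-\theta}$, which holds for any $\theta\in(0,1)$ by the embedding $\Wspace\hookrightarrow\Xspace$ (and trivially with $\theta=1$ when $\Wspace=\Xspace$). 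For \Cref{abstract symplectic assumption}: the domain $\Dom(\hat J) = H^{1+\alpha/2}(\R)$ contains the Schwartz class, hence is dense in $\Xspace^* = H^{-\alpha/2}(\R)$; $\hat J$ acts on the Fourier side as multiplication by $i\xi$, so it is injective and closed; skew-adjointness on $\Dom(\hat J)$ is integration by parts; and $B(u) = \Id$ makes parts \ref{B bijective assumption} and \ref{regularity of J} immediate.

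For \Cref{extend DP and DE assumption}, I would take $\nabla\eng(u)\coloneqq\Lambda^\alpha u - u^p$ and $\nabla\mom(u)\coloneqq -\iota_{\Xspace\to\Xspace^*}u$. The linear term $\Lambda^\alpha$ maps $\Xspace = H^{\alpha/2}(\R)$ continuously into $\Xspace^* = H^{-\alpha/2}(\R)$, and the admissibility condition \eqref{model p condition} is exactly what furnishes the embedding $\Xspace\hookrightarrow L^{p+1}(\R)$, so $u\mapsto u^p$ is continuous from $\Xspace$ into $L^{(p+1)/p}(\R)$, which embeds into $H^{-\alpha/2}(\R) = \Xspace^*$ by duality with the same embedding. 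Hence $\nabla\eng\in C^0(\Xspace;\Xspace^*)$ extends $D\eng$, and because $\mom$ is quadratic, $\nabla\mom$ extends $D\mom$ at once.

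The remaining work is \Cref{abstract symmetry assumption}. Parts \ref{invariances}--\ref{unitary assumption} follow because translation preserves every $L^2$-based Sobolev norm and commutes with every Fourier multiplier, so $\Vspace$, $\Wspace$, $\nbhdO = \Xspace$, and $I^{-1}\Dom(\hat J) = H^{1+3\alpha/2}(\R)$ are invariant, the flow property is automatic, and each $T(s) = dT(s)$ is unitary on $\Xspace$ and isometric on $\Vspace$, $\Wspace$; \ref{strong continuity} is the usual strong continuity of translation on $H^s(\R)$; the bound in \ref{affine bound assumption} is trivial since $T(s)0 = 0$; the identities \eqref{abstract commutation identity} and \eqref{abstract derivative commutation identity} hold because $\hat J$, $I$, $dT(s)$, and $dT'(0)$ are all Fourier multipliers; for \ref{T'(0) assumption} one has $\Dom(T'(0)|_\Vspace) = H^{1+\alpha/2}(\R) = \Dom(\hat J)$, so $\nabla\mom(u) = -u\in\Dom(\hat J)$ there and $J(u)\nabla\mom(u) = -\partial_x u = T'(0)u$; and \ref{T conserves energy} is the translation invariance of \eqref{model energy definition}. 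The one point I expect to require a genuine argument is the density in \ref{range density}: $\Rng(\hat J) = \partial_x H^{1+\alpha/2}(\R)$ contains every function whose Fourier transform is smooth and compactly supported away from $\xi = 0$, such functions also lie in $\Dom(T'(0)|_\Wspace)$ (indeed in every $H^s(\R)$), and they are dense in $\Xspace = H^{\alpha/2}(\R)$ — given $f\in H^{\alpha/2}$, excising a small neighborhood of the origin in frequency and truncating at large frequency changes $\hat f$ by an arbitrarily small amount in the $\langle\xi\rangle^\alpha$-weighted $L^2$ norm, and a subsequent mollification in $\xi$ produces the required approximants. That density step, together with confirming via \eqref{model p condition} that the nonlinear part of $D\eng$ genuinely lands in $\Xspace^*$, is the only place where anything beyond bookkeeping is needed, and I do not anticipate a serious obstacle.
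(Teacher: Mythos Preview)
Your proposal is correct and follows essentially the same approach as the paper's proof, which likewise dispatches Assumptions~\ref{abstract interpolation assumption} and \ref{extend DP and DE assumption} immediately from $\Vspace=\Xspace$, records that Assumption~\ref{abstract symplectic assumption} was already checked, and singles out the density in Assumption~\ref{abstract symmetry assumption}\ref{range density} as the only nontrivial point, handling it by a Fourier-side truncation argument (Lemma~\ref{sobolev density lemma}). Your treatment is somewhat more detailed---for instance, you spell out why $u^p\in\Xspace^*$, which the paper absorbs into the earlier observation that $E\in C^\infty(\Xspace;\R)$---but the substance is the same.
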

        \begin{proof}
            Because $\Vspace = \Xspace$, both Assumption~\ref{abstract interpolation assumption} and Assumption~\ref{extend DP and DE assumption} hold trivially.  Likewise, for $J$ defined as in \eqref{model J}, we have already verified that the relevant requirements of Assumption~\ref{abstract symplectic assumption} are met. To show that the symmetry group satisfies Assumption~\ref{abstract symmetry assumption} requires chasing the definitions. Both invariance and the commutativity are readily checked, as differentiation commutes with translation. Finally, the only remaining property that requires elaboration is \ref{range density}. We see that
            \begin{align*}
                \Dom(T'(0)|_{\Wspace}) \cap \Rng{J} &= \begin{cases}
                    H^{1+\frac{\alpha}{2}}(\R) \cap\partial_x H^{1+\frac{\alpha}{2}}(\R) & \quad \text{if $\frac{\alpha}{2} > s_0(\alpha,p)$}, \\
                    H^{(1+s_0)+}(\R) \cap\partial_x H^{1+\frac{\alpha}{2}}(\R) & \quad \text{if $\frac{\alpha}{2} \leq s_0(\alpha,p)$},
                \end{cases}
            \end{align*}
            which is dense in $\Xspace$ by the same kind of argument as in Lemma~\ref{sobolev density lemma}.
        \end{proof}

    \subsection{Solitary waves and spectral properties}
        
        It is well known that the dispersive models captured by \eqref{general model equation} support solitary waves $u(t)=T(ct)U_c=U_c(\placeholder-ct)$ for all $c > 0$. Recall that such $U_c \in \Xspace$ must satisfy
        \begin{equation}
            \label{model bound state equation}
            D\augHam(U_c) = \Lambda^\alpha U_c - U_c^p + cU_c = 0,\qquad (\text{in $\Xspace^*$})
        \end{equation}
        and by introducing the scaling
        \begin{equation}
            \label{model ground state scaling}
            U_c = c^{\frac{1}{p-1}}Q(c^{\frac{1}{\alpha}} \placeholder)
        \end{equation}
        we see that all such waves are just scaled versions of solutions of the equation
        \begin{equation}
            \label{model ground state equation}
            Q + \Lambda^\alpha Q = Q^p.
        \end{equation}
        
        \begin{lemma}
            If $Q \in \Xspace$ is a nontrivial solution of \eqref{model ground state equation}, then the family $\brac{U_c \colon c \in (0,\infty)}$ defined through \eqref{model ground state scaling} satisfies Assumption~\ref{bound state assumption}.
        \end{lemma}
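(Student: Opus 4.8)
The plan is to translate each clause of Assumption~\ref{bound state assumption} into the concrete setting at hand --- where $\nbhdO = \Xspace$, the group acts by translation with $T'(0) = -\partial_x$ and $T'''(0) = -\partial_x^3$, $\hat J = \partial_x$, and $I = \jb{\Lambda}^\alpha$ --- and then to reduce everything to two properties of the profile $Q$: that it is $H^\infty$, and that it together with its derivatives decays polynomially. Clauses \ref{bound state non-degeneracy} and \ref{non-periodic bound state assumption} are immediate: $T'(0)U_c = -U_c' \neq 0$ since a nonzero constant cannot lie in $\Xspace = H^{\alpha/2}(\R)$ (equivalently $\nabla\mom(U_c) = -U_c \neq 0$, so $U_c$ is never a critical point of $\mom$); and because $T$ acts by translations with $U_c \in \Xspace$, Riemann--Lebesgue gives $(T(s)U_c, U_c)_\Xspace \to 0$ as $\abs{s}\to\infty$, so $\norm{T(s)U_c - U_c}_\Xspace \to \sqrt{2}\,\norm{U_c}_\Xspace > 0$ and, in particular, $s \mapsto T(s)U_c$ is not periodic. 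For clause \ref{bound state domain assumption}, unwinding the definitions gives $\hat J I T'(0)U_c = -\partial_x^2\jb{\Lambda}^\alpha U_c$, so all of \eqref{technical bound state assumption}--\eqref{second technical bound state assumption} hold as soon as $U_c$ lies in $H^s(\R)$ for a single finite $s$ depending only on $\alpha$ and $\Wspace$. Clause \ref{bound states improved regularity} additionally needs $U_c \in \Wspace$ and $c \mapsto U_c$ differentiable into $\Wspace$, which through the scaling \eqref{model ground state scaling} becomes a decay statement for $Q$.

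First I would show $Q \in H^\infty(\R)$ by the standard elliptic bootstrap applied to the fixed-point form $Q = (1+\Lambda^\alpha)^{-1}Q^p$. From $Q \in \Xspace = H^{\alpha/2}(\R)$ and the embedding $\Xspace \hookrightarrow L^{p+1}(\R)$ --- which holds precisely because $\alpha > 1/3$ and $p$ satisfies \eqref{model p condition} --- the mapping properties of $(1+\Lambda^\alpha)^{-1}$ on Lebesgue spaces improve integrability at each step (the condition \eqref{model p condition} is exactly what keeps this gain strict), so that $Q \in L^\infty(\R)$ after finitely many iterations; a Moser-type product estimate $\norm{fg}_{H^\sigma} \lesssim \norm{f}_{H^\sigma}\norm{g}_{L^\infty} + \norm{f}_{L^\infty}\norm{g}_{H^\sigma}$ then upgrades $Q \in H^\sigma$ to $Q \in H^{\sigma+\alpha}$, and iterating yields $Q \in H^s(\R)$ for all $s$. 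By \eqref{model ground state scaling} the same holds for each $U_c$, which settles \ref{bound state domain assumption} and the membership $U_c \in \nbhdO \cap \Wspace$.

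It remains to prove $c \mapsto U_c \in C^1((0,\infty);\Wspace)$. Writing $U_c(x) = c^{1/(p-1)}Q(c^{1/\alpha}x)$ and differentiating in $c$ produces a linear combination, with coefficients bounded on compact $c$-intervals, of $U_c$ and the rescaled function $y \mapsto y\,Q'(y)$; since a dilation family $f \mapsto f(\lambda\,\placeholder)$ is continuous in $H^s$-norm and, for $f$ with $y\,f'(y) \in H^s$, continuously differentiable in $\lambda$, it suffices to know $y\,Q'(y) \in H^s(\R)$ for the relevant $s$. This follows from the convolution representation $Q = K_\alpha * Q^p$, where the kernel of $(1+\Lambda^\alpha)^{-1}$ obeys $\abs{K_\alpha(x)} + \abs{x}\,\abs{K_\alpha'(x)} \lesssim \jb{x}^{-(1+\alpha)}$: since $Q \in H^\infty \hookrightarrow L^\infty$ tends to $0$ at infinity, $Q^p$ decays rapidly, and bootstrapping the convolution (placing derivatives on $K_\alpha$ and exploiting the fast decay of $Q^p$) gives $\abs{Q^{(k)}(x)} \lesssim \jb{x}^{-(1+\alpha)}$ for every $k$, hence $y\,Q'(y) \in H^s(\R)$ for all $s$. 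The one genuinely non-elementary ingredient is this decay bound on $K_\alpha$, equivalently the algebraic decay of the fractional ground state, in the range $\alpha \in (1/3,1)$; for $\alpha = 2$ the profile is Schwartz, for $\alpha = 1$ the $\jb{x}^{-2}$ decay is classical, and for the rest one invokes the pointwise bounds on the Bessel-type kernel $K_\alpha$ or the decay/regularity results of Bona--Li, de~Bouard--Saut, and Frank--Lenzmann. I expect this decay estimate to be the main point to pin down; the remaining verifications are routine manipulations with the scaling identity, Sobolev embeddings, and Riemann--Lebesgue.
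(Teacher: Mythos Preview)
Your proof is correct and follows the same strategy as the paper's: bootstrap to $Q \in H^\infty$, then verify each clause of Assumption~\ref{bound state assumption} from the scaling relation, with Riemann--Lebesgue handling part~\ref{non-periodic bound state assumption}. You are in fact more thorough than the paper on one point: the paper declares parts~\ref{bound states improved regularity}--\ref{bound state domain assumption} ``immediate'' from $Q \in H^r$ for all $r$, whereas you correctly observe that the $C^1$-dependence $c \mapsto U_c \in \Wspace$ additionally requires $yQ'(y) \in H^s$, hence spatial decay of $Q$, and you appropriately point to the Bessel-kernel bounds and the Frank--Lenzmann theory for this.
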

        \begin{proof}
            By a standard bootstrapping argument, we have that any such solution $Q$ lies in $H^r(\R)$ for every $r \geq 0$. Since $U_c$ is defined by \eqref{model ground state scaling}, parts \ref{bound states improved regularity}-\ref{bound state domain assumption} are therefore immediate. Finally,
            \[
                \liminf_{\abs{s}\to \infty}{\norm{T(s)U_c - U_c}_{\Xspace}} = 2\norm{U_c}_{\Xspace} > 0,
            \]
            so the second option of part \ref{non-periodic bound state assumption} holds.
        \end{proof}

        It is also easily seen that if $Q \neq 0$ solves \eqref{model ground state equation},  then $Q$ is a critical point of the Weinstein functional $\mathcal{J} \in C^2(\Xspace \setminus \{0\};(0,\infty))$ defined by
        \[
            \mathcal{J}(u) \coloneqq \frac{\norm{u}_{\dot{H}^{\frac{\alpha}{2}}(\R)}^{\frac{p-1}{\alpha}} \norm{u}_{L^2(\R)}^{p+1 - \frac{p-1}{\alpha}}}{\norm{u}_{L^{p+1}(\R)}^{p+1}}.
        \]
        We say that a solution $Q$ of \eqref{model ground state equation}  is a ground state if $Q$ is not just a critical point of $\mathcal{J}$, but also an even, positive minimizer. Since $\mathcal{J}$ is invariant under scaling, the same is then true of each $U_c$, solving \eqref{model bound state equation}, defined through \eqref{model ground state scaling}. Note that the corresponding operator $\Hc \in \Lin(\Xspace,\Xspace^*)$ is given by
        \[
            \Hc u \coloneqq D^2E_c(U_c)u = \Lambda^\alpha u - pU_c^{p-1}u + cu,
        \]
        which is clearly self-adjoint. We have the following result, due to Frank and Lenzmann \cite{Frank2013}, vastly generalizing earlier results for KdV \cite{weinstein1982nonlinear} and BO \cite{amick1991uniqueness,amick1991benjamin}.
        
        \begin{lemma}
            \label{ground state existence lemma}
            There exists a unique ground state solution $Q \in \Xspace$ of \eqref{model ground state equation}. Moreover, for each $c \in (0,\infty)$, the spectrum of the operator $H_c$ corresponding to the bound state solution $U_c$ defined by \eqref{model ground state scaling} satisfies
            \[
                \spectrum{I^{-1}\Hc} = \{-\mu_c^2\} \cup \{0\} \cup \Sigma_c,
            \]
            where $-\mu_c^2 < 0$ and $0$ are simple eigenvalues, and $\Sigma_c \subset (0,\infty)$ is bounded away from zero. That is, Assumption~\ref{spectral assumptions} is satisfied.
        \end{lemma}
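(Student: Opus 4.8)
The plan is to reduce the whole statement to two ingredients: the variational construction of a ground state for \eqref{model ground state equation}, and the uniqueness/non-degeneracy theory of Frank and Lenzmann for fractional semilinear equations. First I would establish \emph{existence} of a ground state $Q$. The admissibility condition \eqref{model p condition} is precisely $H^{\alpha/2}$-subcriticality of the embedding $\Xspace = H^{\alpha/2}(\R) \hookrightarrow L^{p+1}(\R)$, so minimizing $u \mapsto \norm{u}_{\dot{H}^{\frac{\alpha}{2}}(\R)}^2 + \norm{u}_{L^2(\R)}^2$ over the sphere $\{\norm{u}_{L^{p+1}(\R)} = 1\}$ --- equivalently minimizing the scaling-invariant Weinstein functional $\mathcal{J}$ --- via Lions' concentration-compactness together with Schwarz symmetrization produces an even, positive minimizer, which after the rescaling inverse to \eqref{model ground state scaling} solves \eqref{model ground state equation}. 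A routine elliptic bootstrap then gives $Q \in H^r(\R)$ for every $r \geq 0$, as already recorded above.

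Next I would pin down the spectral structure of the linearized operator $L_+ \coloneqq \Lambda^\alpha + 1 - pQ^{p-1}$ on $L^2(\R)$. For $\alpha = 2$ this is the classical Sturm--Liouville analysis of Weinstein \cite{weinstein1982nonlinear}, and for $\alpha = 1$ the Benjamin--Ono results of \cite{amick1991uniqueness,amick1991benjamin}; for the full range $\alpha \in (1/3,2)$ --- that is, the fractional exponent $s = \alpha/2 \in (1/6,1)$, which falls within their hypotheses together with \eqref{model p condition} --- it is exactly the theorem of Frank and Lenzmann \cite{Frank2013}: the ground state is unique up to translations, $L_+$ is non-degenerate with $\ker L_+ = \linspan\{Q'\}$, and $L_+$ has precisely one negative eigenvalue, which is simple. (That a negative direction exists is in any case immediate, since pairing \eqref{model ground state equation} against $Q$ gives $\jb{L_+ Q, Q} = (1-p)\int_\R Q^{p+1}\,dx < 0$.)

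Finally I would transfer this to $\Hc = \Lambda^\alpha - pU_c^{p-1} + c$. The scaling \eqref{model ground state scaling} shows that the dilation $u \mapsto c^{-1/(p-1)} u(c^{-1/\alpha}\placeholder)$ carries the quadratic form $\jb{\Hc\placeholder,\placeholder}$ on $\Xspace$ to a positive constant multiple of $\jb{L_+\placeholder,\placeholder}$ on $L^2(\R)$; since $I = \jb{\Lambda}^\alpha$ is positive, the self-adjoint operator $I^{-1}\Hc$ then has the same inertia and kernel as $L_+$: a single simple negative eigenvalue $-\mu_c^2 < 0$ with unit eigenvector $\chi_c$, and a simple eigenvalue $0$ whose eigenfunction is the dilate of $Q'$, hence a nonzero multiple of $\partial_x U_c = -T'(0)U_c$, so the kernel is generated by the symmetry group. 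To see that the remaining spectrum $\Sigma_c$ lies in $(0,\infty)$ and is bounded away from $0$, note that $pU_c^{p-1} \in L^\infty(\R)$ decays at infinity, so multiplication by it is a relatively compact perturbation of $\jb{\Lambda}^{-\alpha}(\Lambda^\alpha + c)$; by Weyl's theorem the essential spectrum of $I^{-1}\Hc$ is the range of the continuous symbol $\xi \mapsto (|\xi|^\alpha + c)\jb{\xi}^{-\alpha}$, a compact subinterval of $(0,\infty)$, and the only eigenvalues below it are $-\mu_c^2$ and $0$ by the Morse-index-one property. Self-adjointness of $I^{-1}\Hc$ on $\Xspace$ follows from the symmetry of $\Hc \in \Lin(\Xspace,\Xspace^*)$, so Assumption~\ref{spectral assumptions} holds. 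The main obstacle is entirely in the second step: the non-degeneracy of $L_+$ in the genuinely fractional regime is the deep input from \cite{Frank2013}, and some care is needed to confirm that the full range $\alpha \in (1/3,2]$ together with \eqref{model p condition} lies within the scope of that theory --- this is precisely where the restriction on $p$ is used --- whereas the variational existence, the scaling conjugation, and the Weyl essential-spectrum computation are routine.
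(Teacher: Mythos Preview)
Your proposal is correct and matches the paper's approach: the paper does not supply its own proof of this lemma but simply attributes the result to Frank and Lenzmann \cite{Frank2013} (with the classical cases $\alpha=2$ and $\alpha=1$ cited separately). Your sketch correctly isolates that reference as the deep input and fills in the routine surrounding steps --- the variational existence of a ground state, the scaling conjugation from $L_+$ to $\Hc$, and the Weyl essential-spectrum computation for $I^{-1}\Hc$ --- that the paper leaves implicit.
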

        
        Of course, $Q$ cannot be written down explicitly for most choices of $\alpha$ and $p$. Famously, for KdV
        \[
            Q_{\mathrm{KdV}}(x) = \frac{3}{2} \sech^2{\parn*{\frac{x}{2}}},
        \]
        while Benjamin \cite{benjamin1967internal} exhibited the ground state  
        \[
            Q_{\mathrm{BO}}(x) = \frac{2}{1+x^2}
        \]
        for BO in his original paper on the topic.
        
    \subsection{Stability and instability} 
        
        The analysis of the previous subsection confirms that the family $\{U_c : c \in (0,\infty)\}$ corresponding to the unique ground state $Q$ of \eqref{model ground state equation} furnished by Lemma~\ref{ground state existence lemma} falls into the scope of the general stability theory developed in Section~\ref{abstract stability section} and Section~\ref{abstract instability}.  We therefore obtain the following extended version of the classical result of Bona, Souganidis, and Strauss~\cite{bona1987stability}:
        
        \begin{theorem}
            \label{bss stability theorem}
            If $ p < 2\alpha +1$, then each solitary wave in the family $\{U_c : c \in (0,\infty)\}$ is conditionally orbitally stable in the sense of Theorem~\ref{abstract stability theorem} when $\frac{\alpha}{2} \leq s_0(\alpha,p)$, and orbitally stable in the sense of Corollary~\ref{X = W stability corollary} when $\frac{\alpha}{2} > s_0(\alpha,p)$. When $p > 2\alpha + 1$, the solitary waves are orbitally unstable in the sense of Theorem~\ref{abstract instability theorem}.
        \end{theorem}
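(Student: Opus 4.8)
The proof is a brief computation of the moment of instability, after which the conclusions are immediate from the abstract machinery. All of the structural hypotheses (Assumptions~\ref{abstract interpolation assumption}--\ref{abstract symmetry assumption}, Assumption~\ref{bound state assumption}, and Assumption~\ref{spectral assumptions}) have already been checked for the ground-state family $\{U_c\}$ in Lemma~\ref{model hamiltonian lemma}, the accompanying verification of Assumption~\ref{bound state assumption}, and Lemma~\ref{ground state existence lemma}. The only remaining input is the sign of $d''(c)$.

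The plan is to exploit the scaling \eqref{model ground state scaling}. Since $\mom(u) = -\tfrac12\norm{u}_{L^2(\R)}^2$ and $d'(c) = -\mom(U_c)$ by \eqref{abstract d' identity}, we have $d'(c) = \tfrac12\norm{U_c}_{L^2(\R)}^2$. Inserting $U_c = c^{1/(p-1)}Q(c^{1/\alpha}\placeholder)$ and changing variables gives $\norm{U_c}_{L^2(\R)}^2 = c^{\frac{2}{p-1}-\frac1\alpha}\norm{Q}_{L^2(\R)}^2$, so that
\[
    d''(c) \;=\; \frac{1}{2}\Bigl(\frac{2}{p-1}-\frac{1}{\alpha}\Bigr)\,c^{\frac{2}{p-1}-\frac{1}{\alpha}-1}\,\norm{Q}_{L^2(\R)}^2
           \;=\; \frac{2\alpha-(p-1)}{2\alpha(p-1)}\,c^{\frac{2}{p-1}-\frac{1}{\alpha}-1}\,\norm{Q}_{L^2(\R)}^2 .
\]
Because $Q$ is a nontrivial ground state, $\norm{Q}_{L^2(\R)} > 0$, and the power of $c$ is positive for $c > 0$. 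Hence $d''(c) > 0$ exactly when $2\alpha - (p-1) > 0$, i.e. $p < 2\alpha + 1$, and $d''(c) < 0$ exactly when $p > 2\alpha + 1$; the threshold $p = 2\alpha+1$, where $d''(c) \equiv 0$, is excluded by hypothesis.

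With the sign of $d''$ determined, the three cases follow directly from the abstract results. If $p < 2\alpha+1$ and $\tfrac{\alpha}{2} \le s_0(\alpha,p)$, then $\Wspace = H^{s_0+}(\R)$ and Theorem~\ref{abstract stability theorem} applies, giving conditional orbital stability. If $p < 2\alpha+1$ and $\tfrac{\alpha}{2} > s_0(\alpha,p)$, then $\Xspace = \Wspace$, and local well-posedness of \eqref{general model equation} in $H^s(\R)$ for $s > s_0$ furnishes Assumption~\ref{abstract LWP assumption}, so Corollary~\ref{X = W stability corollary} yields (unconditional) orbital stability. If $p > 2\alpha + 1$, then local well-posedness in $\Wspace$ again supplies Assumption~\ref{abstract LWP assumption}, and Theorem~\ref{abstract instability theorem} gives orbital instability.

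There is no genuine analytic obstacle here: the argument rests only on the scaling identity, the positivity $\norm{Q}_{L^2(\R)} > 0$, and the availability of local existence in $\Wspace$, all of which are in hand. The one caveat worth stating is that, exactly as in the Grillakis--Shatah--Strauss framework, the critical case $p = 2\alpha+1$ (where the moment of instability is identically flat) is not addressed.
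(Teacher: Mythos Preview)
Your proof is correct and follows essentially the same approach as the paper: compute $d'(c) = -\mom(U_c)$ via the scaling \eqref{model ground state scaling}, read off the sign of $d''(c)$ from the exponent $\tfrac{2}{p-1}-\tfrac{1}{\alpha}$, and invoke the abstract stability/instability theorems. The only cosmetic difference is that you differentiate explicitly to obtain a formula for $d''(c)$, whereas the paper simply notes the sign of the exponent; the content is identical.
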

        \begin{proof}
            Whether $U_c$ is stable or not reduces to the sign of $d''(c)$, where we recall that $d(c) \coloneqq \augHam(U_c)$ is the moment of instability. Exploiting the scaling \eqref{model ground state scaling} and the identity \eqref{abstract d' identity}, we find
            \[
                d'(c) = -P(U_c) = \frac{1}{2}\int_\R U_c^2\,dx = \frac{1}{2} c^{\frac{2}{p-1}-\frac{1}{\alpha}}\norm{Q}_{L^2(\R)}^2,
            \]
            whence
            \[
                \signum{d^{\prime\prime}(c)} = \signum{\left(\frac{2}{p-1} - \frac{1}{\alpha} \right)}
                \begin{cases}
                    > 0 & \text{if $p < 2\alpha + 1$,}\\
                    < 0 & \text{if $p > 2\alpha + 1$,} 
                \end{cases}
            \]
            which gives the statement in the theorem.
        \end{proof}

\section*{Acknowledgements}
    
    KV acknowledges the support by grants nos.~231668 and 250070 from the Research Council of Norway.
    The research of EW is supported by the Swedish Research Council (Grant nos.~621-2012-3753 and 2016-04999).
    The research of SW is supported in part by the National Science Foundation through awards DMS-1549934 and DMS-1812436.
    
    The authors also wish to thank Christopher Curtis, John Grue, and Chongchun Zeng for enlightening discussions related to this work.  

\appendix

\section{Function spaces}
    Define the Schwartz class $\mathcal{S}(\R)$ to be the set of all $f \in C^\infty(\R)$ such that $x^n f^{(m)}(x) \in L^\infty(\R)$ for all $n,m \in \N_0$, and also the subspace $\mathcal{S}_0(\R)$ of those $f \in \mathcal{S}(\R)$ for which $\hat{f}^{(n)}(0)=0$ for all $n \in \N_0$. For every $s \in \R$, we define the inhomogeneous Sobolev space $H^s(\R)$ to be the completion of $\mathcal{S}(\R)$ with respect to
    \begin{equation}
        \label{inhomogeneous norm}
        \norm{f}_{H^s(\R)} \coloneqq \norm{\jb{\placeholder}^s\hat{f}}_{L^2(\R)},
    \end{equation}
    which can be realized as the space of all $f \in \mathcal{S}'(\R)$ for which $\hat{f} \in L_{\text{loc}}^1(\R)$ and $\norm{f}_{H^s(\R)} < \infty$.
    
    The homogeneous Sobolev space $\dot{H}^s(\R)$, on the other hand, is defined to be the completion of $\mathcal{S}_0(\R)$ with respect to
    \begin{equation}
        \label{homogeneous norm}
        \norm{f}_{\dot{H}^s(\R)} \coloneqq \norm{\abs{\placeholder}^s\hat{f}}_{L^2(\R)} < \infty,
    \end{equation}
    and for $s < 1/2$ it can be realized like before as the space of all $f \in \mathcal{S}'(\R)$ for which $\hat{f} \in L_{\text{loc}}^1(\R)$ and $\norm{f}_{\dot{H}^s(\R)} < \infty$. If $s \geq 1/2$, set $n \coloneqq \lfloor s + 1/2 \rfloor$, so that $s = n + \alpha$ with $\alpha \in [-1/2,1/2)$. Then $\dot{H}^s(\R)$ can be realized as the space of all $f \in \mathcal{S}'(\R)$ such that $f^{(n)} \in \dot{H}^\alpha(\R)$, \emph{modulo} polynomials of degree at most $n-1$, with the norm \eqref{homogeneous norm} interpreted as $\norm{f^{(n)}}_{\dot{H}^\alpha}$.
    
    On domains $\Omega \subset \R^2$, we shall only have use for $\dot{H}^1(\Omega)$, defined as the space of $f \in L_{\text{loc}}^1(\Omega)/\R$ for which $\nabla f \in L^2(\Omega)$.
    \begin{lemma}[Density]
        \label{sobolev density lemma}
        For all $s,r \in \R$, the space $H^s(\R) \cap \dot{H}^r(\R)$ is dense in both $H^s(\R)$ and $\dot{H}^r(\R)$.
    \end{lemma}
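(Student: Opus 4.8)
The plan is to prove both density assertions simultaneously by producing a single subspace that is dense in each of $H^s(\R)$ and $\dot{H}^r(\R)$ while being contained in their intersection; the natural candidate is the Schwartz-type class $\mathcal{S}_0(\R)$ introduced above, consisting of those $f\in\mathcal{S}(\R)$ whose Fourier transform vanishes to infinite order at the origin.

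First I would pin down the meaning of the intersection on the Fourier side. Since $f\in H^s(\R)$ forces $\hat f$ to be a genuine locally integrable function with $\jb{\placeholder}^s\hat f\in L^2(\R)$, the space $H^s(\R)\cap\dot H^r(\R)$ is exactly the set of tempered distributions $f$ with $\jb{\placeholder}^s\hat f\in L^2(\R)$ and $\abs{\placeholder}^r\hat f\in L^2(\R)$ (for $r\geq 1/2$ the latter condition being read modulo polynomials, consistently with the realization of $\dot H^r(\R)$ recalled above via $\widehat{f^{(n)}}=(i\placeholder)^n\hat f$). With this description, the inclusion $\mathcal{S}_0(\R)\subseteq H^s(\R)\cap\dot H^r(\R)$ is a short check: rapid decay of $\hat f$ controls both weighted norms near infinity, while near $\xi=0$ one has $\jb{\xi}^s\sim 1$ and $\abs{\xi}^r$ is either bounded (if $r\geq 0$) or blows up only polynomially (if $r<0$), in which case the infinite-order vanishing of $\hat f$ at the origin — the defining property of $\mathcal{S}_0(\R)$ — more than compensates.

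Density of $\mathcal{S}_0(\R)$ in $\dot H^r(\R)$ is then immediate, since by construction $\dot H^r(\R)$ is the completion of $\mathcal{S}_0(\R)$ in the homogeneous norm and this is preserved by the concrete realization; combined with $\mathcal{S}_0(\R)\subseteq H^s(\R)\cap\dot H^r(\R)\subseteq\dot H^r(\R)$ this yields density of the intersection in $\dot H^r(\R)$. The only step that needs a genuine (though routine) argument — the one I would flag as the main obstacle — is density of $\mathcal{S}_0(\R)$ in $H^s(\R)$, because $\mathcal{S}(\R)$, which is dense in $H^s(\R)$ by definition, is \emph{not} contained in $\mathcal{S}_0(\R)$. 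Here I would fix $\phi\in C_c^\infty(\R)$ with $\phi\equiv 1$ near $0$ and $0\leq\phi\leq 1$, set $\phi_\delta(\xi):=\phi(\xi/\delta)$, and for $f\in\mathcal{S}(\R)$ define $g_\delta$ by $\hat g_\delta:=(1-\phi_\delta)\hat f$. Then $g_\delta\in\mathcal{S}_0(\R)$, since $\hat g_\delta$ is still Schwartz and vanishes identically on a neighborhood of the origin, while
\[
    \norm{f-g_\delta}_{H^s(\R)}^2=\int_{\R}\jb{\xi}^{2s}\abs{\hat f(\xi)}^2\phi_\delta(\xi)^2\,d\xi \longrightarrow 0 \quad \text{as } \delta\to 0^+,
\]
by dominated convergence, the integrand being dominated by $\jb{\placeholder}^{2s}\abs{\hat f}^2\in L^1(\R)$ and converging pointwise to $0$ for $\xi\neq 0$. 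Since $\mathcal{S}(\R)$ is dense in $H^s(\R)$, a diagonal argument then gives density of $\mathcal{S}_0(\R)$, hence of $H^s(\R)\cap\dot H^r(\R)$, in $H^s(\R)$, completing the proof.

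As an alternative that avoids $\mathcal{S}_0(\R)$ for the $H^s$-statement (and works symmetrically for the $\dot H^r$-statement when $r<1/2$): given $f$ in the relevant space, multiply $\hat f$ by a smooth cutoff supported in an annulus $\delta\leq\abs{\xi}\leq R$; the truncations lie in $H^s(\R)\cap\dot H^r(\R)$ because $\abs{\xi}^r$ is bounded above and below on the annulus, and they converge to $f$ in the corresponding norm by dominated convergence as $\delta\to 0$ and $R\to\infty$. I would nonetheless present the $\mathcal{S}_0(\R)$ route as the primary argument, since it treats all exponents $r,s$ uniformly and sidesteps the polynomial bookkeeping in the realization of $\dot H^r(\R)$ for $r\geq 1/2$.
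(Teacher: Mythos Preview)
Your proof is correct, but it differs from the paper's. The paper argues by direct annular Fourier truncation: with $\chi_n := \chi_{1/n<|\xi|<n}$, for $f\in H^s(\R)$ one sets $f_n := \mathscr{F}^{-1}(\chi_n\hat f)$ and checks $f_n\in H^s\cap\dot H^r$ with $f_n\to f$ in $H^s$; for $f\in\dot H^r(\R)$ one first picks $k$ with $r-k<1/2$ so that $f^{(k)}\in\dot H^{r-k}$ is realized as a tempered distribution, and then takes $f_n := \mathscr{F}^{-1}\bigl(\chi_n\widehat{f^{(k)}}/(i\xi)^k\bigr)$. Your route instead exploits that $\dot H^r$ is \emph{by definition} the completion of $\mathcal{S}_0$, so density on that side is free, and reduces everything to showing $\mathcal{S}_0$ is dense in $H^s$ via a smooth low-frequency cutoff. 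Your argument is arguably cleaner in that it treats all $r$ uniformly and sidesteps the derivative/polynomial bookkeeping; the paper's version is more hands-on and constructive from the given $f$, and in fact your ``alternative'' paragraph is essentially the paper's method (with sharp rather than smooth cutoffs).
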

    \begin{proof}
        Define $\chi_n \coloneqq \chi_{1/n<\abs{\xi}<n}$ for all $n \in \N$. Suppose first that $f \in H^s(\R)$, and set $f_n \coloneqq \mathscr{F}^{-1}(\chi_n\hat{f})$ for $n \in \N$. Then $f_n \in H^s(\R)\cap\dot{H}^r(\R)$ and
        \[
            \norm{f-f_n}_{H^s(\R)} = \norm{\jb{\placeholder}^s(1-\chi_n)\hat{f}}_{L^2(\R)},
        \]
        so $f_n \to f$ in $H^s(\R)$. Next, suppose that $f \in \dot{H}^r$ and choose $k \in \N$ large enough so that $r - k < 1/2$. Then the sequence
        \[
            f_n \coloneqq \mathscr{F}^{-1}\parn*{\chi_n \widehat{f^{(k)}} / (i\xi)^k} \in H^s(\R)\cap\dot{H}^r(\R)
        \]
        converges to $f$ in $\dot{H}^r(\R)$.
    \end{proof}

\section{Existence theory}
    \label{existence theory appendix}
    In this appendix, we present a slightly modified version of the existence theory for capillary-gravity waves with a point vortex  due to Shatah, Walsh, and Zeng \cite{shatah2013travelling}. The original paper fixes the location of the vortex, which is ill-suited for us. Since $\epsilon$ appears in the Poisson map, it must be held fixed on any family of waves to which we wish to apply the general stability theory. We can obtain such families by also allowing the location of the point vortex to vary.
    
    Specifically, we suppose that the point vortex is situated at $\bar{x} = -ae_2$, where $a > 0$. A symmetric traveling wave solution to \eqref{SWZ formulation} having wave speed $c$ must then satisfy the abstract operator equation 
    \begin{equation}
        \label{operator equation}
        \mathscr{F}(\eta,\varphi,c;\epsilon,a) = 0,
    \end{equation}
    with $\mathscr{F} = (\mathscr{F}_1, \mathscr{F}_2,\mathscr{F}_3) \colon O \subset (X \times \R \times (0,\infty)) \to Y$ defined by
    \begin{align*}
        \mathscr{F}_1(\eta,\varphi,c;\epsilon,a) &\coloneqq \begin{multlined}[t]\frac{(\varphi')^2 -2\eta'\varphi'\DN(\eta)\varphi - (\DN(\eta)\varphi)^2}{2\jb{\eta'}^2}-c\varphi'+g\eta - b\parn*{\frac{\eta'}{\jb{\eta'}}}' \\
        +\epsilon\varphi'\Theta_{x_1}|_S + \frac{\epsilon^2}{2}\parn*{\abs{\nabla \Theta}^2}|_S-\epsilon c \Theta_{x_1}|_S
        \end{multlined}\\
        \mathscr{F}_2(\eta,\varphi,c;\epsilon,a) &\coloneqq c\eta' + \DN(\eta)\varphi + \epsilon \nonnorm \Theta,\\
        \mathscr{F}_3(\eta,\varphi,c;\epsilon,a) &\coloneqq c - (\mathcal{H}(\eta)\varphi)_{x_1}(0,-a) + \frac{\epsilon}{4\pi a},
    \end{align*}
    where $\mathcal{H}(\eta)$ denotes the harmonic extension operator. We will use the spaces
    \begin{align*}
        X &\coloneqq H_e^k(\R) \times (\dot{H}_o^k(\R) \cap \dot{H}_o^{1/2}(\R)) \times \R,\\
        Y &\coloneqq H_e^{k-2}(\R) \times (\dot{H}_o^{k-1}(\R)\cap \dot{H}_o^{-1/2}) \times \R
    \end{align*}
    for any $k > 3/2$ fixed, with the subscripts indicating odd and even, and the open set
    \[
        O \coloneqq \{(\eta,\varphi,c;\epsilon,a) \in X \times \R \times (0,\infty) : \abs{\eta(0)} < a\}.
    \]
    The map $\mathscr{F}$ is then $C^\infty$ (even analytic), and we have the following existence theorem.
    
    \begin{theorem}
        \label{existence theorem}
        There exists a $C^\infty$-surface
        \[
            \{(\eta(\epsilon,a), \varphi(\epsilon,a);\epsilon,a) : (\epsilon,a) \in U\} \subset O \times \R \times (0,\infty)
        \]
        of solutions to \eqref{operator equation}, with $U$ an open neighborhood of $\{0\} \times (0,\infty)$. Asymptotically, the solutions are of the form
        \begin{equation}
            \label{asymptotic form waves}
            \begin{aligned}
                \eta(\epsilon,a) &= \epsilon^2 \eta_2(a)+ O(\epsilon^4),\\
                \varphi(\epsilon,a) &= O(\epsilon^3),\\
                c(\epsilon,a) &= \epsilon c_1(a) + O(\epsilon^3),
            \end{aligned}
        \end{equation}
        in $C_{\text{loc}}^1((0,\infty);X)$, with
        \begin{equation}
            \label{c1 and eta2 definition}
            c_1(a) \coloneqq - \frac{1}{4\pi a}, \qquad \eta_2(a) \coloneqq \frac{1}{4\pi^2}(g-b\partial_{x_1}^2)^{-1}\parn*{\frac{x_1^2-a^2}{(x_1^2+a^2)^2}}.
            \qedhere
        \end{equation}
    \end{theorem}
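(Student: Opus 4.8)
The plan is to solve the steady operator equation \eqref{operator equation} by the analytic implicit function theorem, perturbing off the flat state. The first task is to verify that $\mathscr{F}$ is well-defined and real-analytic on $O$. Away from the vertical segment joining the vortex at $-ae_2$ to its mirror at $ae_2$, the functions $\Theta$, $\Xi$, $\fund$ are explicit and smooth, and the constraint $\abs{\eta(0)} < a$ defining $O$ keeps the graph of $\eta$ strictly between $-ae_2$ and $ae_2$; this makes every trace appearing in $\mathscr{F}_1$, $\mathscr{F}_2$, $\mathscr{F}_3$ meaningful and analytic in $\eta \in H_e^k(\R)$, while the remaining $\eta$-dependence enters only through $\DN(\eta)$ and the harmonic extension $\mathcal{H}(\eta)$, whose analyticity with values in the relevant operator spaces is standard (see \cite[Appendix A]{shatah2013travelling}, \cite{lannes2013book}). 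One checks that $\mathscr{F}$ preserves the even/odd structure built into $X$ and $Y$, and that $\mathscr{F}(0,0,0;0,a) = 0$ for every $a > 0$, since at $\epsilon = 0$ the vortex contributes nothing and $(0,0,0)$ is the trivial standing wave.

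The crucial point is that $L_a \coloneqq D_{(\eta,\varphi,c)}\mathscr{F}(0,0,0;0,a)$ is a bounded isomorphism $X \to Y$. All quadratic and $\epsilon$-dependent terms drop out at the flat state, and with $c = 0$ a short computation using $\DN(0) = \abs{\partial_{x_1}}$ gives
\[
    L_a(\dot\eta,\dot\varphi,\dot c) = \big( (g - b\partial_{x_1}^2)\dot\eta,\ \abs{\partial_{x_1}}\dot\varphi,\ \dot c - (\mathcal{H}(0)\dot\varphi)_{x_1}(0,-a) \big).
\]
This operator is triangular: $g - b\partial_{x_1}^2 \colon H_e^k \to H_e^{k-2}$ and $\abs{\partial_{x_1}} \colon \dot{H}_o^k \cap \dot{H}_o^{1/2} \to \dot{H}_o^{k-1} \cap \dot{H}_o^{-1/2}$ are isomorphisms, after which the third line determines $\dot c$ uniquely, the evaluation $(\mathcal{H}(0)\dot\varphi)_{x_1}(0,-a)$ being a bounded functional of $\dot\varphi$ as it is the gradient, at interior depth $a > 0$, of a harmonic extension. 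It is essential here that $c$ is an \emph{unknown} and that we perturb at wave speed $c = 0$, away from the bifurcation speed of the irrotational solitary-wave problem, so that no Lyapunov--Schmidt reduction is needed --- the point vortex plays the role of a small forcing. The analytic IFT then produces, for each $a_0 > 0$, a neighborhood and a unique analytic branch $(\epsilon,a) \mapsto (\eta(\epsilon,a),\varphi(\epsilon,a),c(\epsilon,a)) \in X \times \R$ of solutions of \eqref{operator equation} through $(0,0,0)$ at $\epsilon = 0$; since $L_a$ depends analytically on $a \in (0,\infty)$ and the branches agree on overlaps by uniqueness, they patch together over an open neighborhood $U$ of $\{0\} \times (0,\infty)$.

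It remains to read off the asymptotics \eqref{asymptotic form waves}. The system $\mathscr{F} = 0$ is invariant under $(\eta,\varphi,c;\epsilon,a) \mapsto (\eta,-\varphi,-c;-\epsilon,a)$, so by uniqueness $\eta(\cdot,a)$ is even in $\epsilon$ while $\varphi(\cdot,a)$ and $c(\cdot,a)$ are odd. Expanding $\eta = \epsilon\eta^{(1)} + \epsilon^2\eta^{(2)} + \cdots$ (and similarly for $\varphi, c$) and matching powers of $\epsilon$, one uses $\Theta_{x_1} = -\fund_{x_2}$, $\Theta_{x_2} = \fund_{x_1}$ together with the elementary identities $\fund_{x_1}(x_1,0) = 0$ (hence $\Theta_{x_2}(x_1,0) = 0$) and $\Theta_{x_1}(x_1,0) = -a/(\pi(x_1^2+a^2))$: the order-$\epsilon$ parts of $\mathscr{F}_1$ and $\mathscr{F}_2$ force $\eta^{(1)} = \varphi^{(1)} = 0$, the order-$\epsilon$ part of $\mathscr{F}_3$ gives $c^{(1)} = -1/(4\pi a) = c_1(a)$, and then the order-$\epsilon^2$ part of $\mathscr{F}_1$ collapses to
\[
    (g - b\partial_{x_1}^2)\eta^{(2)} = c_1(a)\,\Theta_{x_1}(x_1,0) - \tfrac12 \abs{\nabla \Theta(x_1,0)}^2 = \frac{1}{4\pi^2}\cdot\frac{x_1^2 - a^2}{(x_1^2+a^2)^2},
\]
so that $\eta^{(2)} = \eta_2(a) \in H_e^k(\R)$ upon inverting the positive elliptic operator $g - b\partial_{x_1}^2$; this is \eqref{c1 and eta2 definition}. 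Joint analyticity of the branch in $(\epsilon,a)$, combined with the parity just noted, turns the remainders into $O(\epsilon^4)$, $O(\epsilon^3)$, $O(\epsilon^3)$ in $C^1_{\mathrm{loc}}((0,\infty);X)$, as claimed.

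The step demanding the most care is the first one: establishing that $\mathscr{F}$ is well-defined and analytic with the stated mapping properties, in particular for the vortex-trace terms in $\mathscr{F}_1$, $\mathscr{F}_2$ and the interior-velocity term in $\mathscr{F}_3$. This is, however, essentially contained in \cite{shatah2013travelling}; the only genuinely new ingredient, the extra parameter $a$, enters merely through the harmless analytic dependence of $L_a$ on $a$, so that the whole argument is a routine adaptation of the fixed-vortex construction.
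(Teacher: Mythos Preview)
Your proof is correct and follows the same approach as the paper: apply the implicit function theorem at the flat state $(0,0,0;0,a)$, noting that the linearization is lower triangular with isomorphisms on the diagonal. Your argument is actually more detailed than the paper's, which simply records the linearization and states that the asymptotics follow by implicit differentiation; your use of the symmetry $(\eta,\varphi,c;\epsilon,a) \mapsto (\eta,-\varphi,-c;-\epsilon,a)$ to kill the odd-in-$\epsilon$ terms of $\eta$ and the even-in-$\epsilon$ terms of $\varphi,c$, followed by the explicit order-$\epsilon^2$ computation for $\eta_2$, is a clean way to obtain \eqref{asymptotic form waves} that the paper leaves implicit.
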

    \begin{proof}
        As in \cite{shatah2013travelling}, this result follows from the implicit function theorem applied to $\mathscr{F}$ at the trivial solutions $(0;0,a)$ for $a \in (0,\infty)$. We compute
        \[
            D_X\mathscr{F}(0;0,a) = \begin{pmatrix}
                g-b\partial_{x_1}^2 & 0 & 0\\
                0 & \abs{\partial_{x_1}} & 0\\
                0 & -(\mathcal{H}(0)\placeholder)_{x_1}(0,-a) & 1
            \end{pmatrix},
        \]
        which is clearly an isomorphism $X \to Y$, as it is a lower diagonal matrix with isomorphisms on the diagonal. Finally, the asymptotic expansions listed in \eqref{asymptotic form waves} can be found by implicit differentiation.
    \end{proof}

    It is possible to write the leading order surface term $\eta_2$ in terms of the so-called exponential integral $E_1$.
    
    \begin{theorem}
        Define the holomorphic function $f \colon \C \setminus(-\infty,0] \to \C$ by
        \[
            f(z) \coloneqq -e^z E_1(z) = (\gamma + \log(z))e^z - \sum_{k=1}^\infty \frac{H_k}{k!}z^k,
        \]
        where $\gamma$ is the Euler--Mascheroni constant and $H_k$ is the $k$-th harmonic number. If we write $w = x +i\alpha = \sqrt{g/b}(x_1+ia)$, then
        \[
            \eta_2(x_1) = \frac{1}{4\pi^2b}\tilde{\eta}_2\parn*{x}, \qquad \tilde{\eta}_2(x) \coloneqq \realpart{\parn*{\frac{f(w)+f(-w)}{2}}}.
        \]
        More explicitly,
        \begin{multline*}
            \tilde{\eta}_2(x) = \brak*{\parn*{\gamma + \log\parn*{\sqrt{x^2+\alpha^2}}}\cos(\alpha) - \frac{\pi}{2}\sin(\alpha)}\cosh(x)\\
            + \sin(\alpha)\arctan(x/\alpha)\sinh(x) - \sum_{k=1}^\infty \frac{H_{2k}}{(2k)!}(x^2+\alpha^2)^k T_{2k}\parn*{\frac{x}{\sqrt{x^2+\alpha^2}}}
        \end{multline*}
        for all $x \in \R$, with $T_k$ being the $k$-th Chebyshev polynomial.
    \end{theorem}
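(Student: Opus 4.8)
The plan is to solve the elliptic equation defining $\eta_2$ in \eqref{c1 and eta2 definition} in closed form. First I would rewrite the forcing as the real part of an analytic function: since
\[
\frac{x_1^2 - a^2}{(x_1^2+a^2)^2} = \realpart\frac{1}{(x_1 + ia)^2},
\]
and $g - b\partial_{x_1}^2$ has real coefficients, it suffices to compute $G \coloneqq (g - b\partial_{x_1}^2)^{-1}\frac{1}{(\placeholder + ia)^2}$ and then set $\eta_2 = \frac{1}{4\pi^2}\realpart G$. Recalling that the (decaying) fundamental solution of $g - b\partial_{x_1}^2$ on $\R$ is $\frac{1}{2\sqrt{gb}}e^{-\kappa\abs{x_1}}$ with $\kappa \coloneqq \sqrt{g/b}$, this reduces everything to evaluating the convolution integral
\[
G(x_1) = \frac{1}{2\sqrt{gb}}\int_\R \frac{e^{-\kappa\abs{x_1 - y}}}{(y+ia)^2}\,dy.
\]

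Next I would split this integral at $y = x_1$, so that the kernel becomes $e^{-\kappa x_1}e^{\kappa y}$ on $(-\infty,x_1)$ and $e^{\kappa x_1}e^{-\kappa y}$ on $(x_1,\infty)$, and in each piece substitute $t = \kappa(y+ia)$. Each piece then becomes $\kappa e^{\mp i\kappa a}\int \frac{e^{\pm t}}{t^2}\,dt$ over a horizontal ray with imaginary part $\kappa a > 0$ that ends at (resp. starts at) $w \coloneqq \kappa(x_1 + ia) = x + i\alpha$. An integration by parts turns $\int\frac{e^{\pm t}}{t^2}\,dt$ into an algebraic boundary term plus $\int\frac{e^{\pm t}}{t}\,dt$, and the latter is a branch of the exponential integral: for the $e^{-\kappa y}$ piece one gets $\int_z^\infty \frac{e^{-t}}{t^2}\,dt = \frac{e^{-z}}{z} - E_1(z)$, and for the $e^{\kappa y}$ piece one uses $\int \frac{e^{t}}{t}\,dt = -E_1(-t)$. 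Here one must confirm that the contours never meet $(-\infty,0]$ — the rays have constant imaginary part $\alpha>0$ and both $w$ and $-w$ have nonzero imaginary part — so the principal branch of $E_1$ (equivalently of $\log$) applies throughout, and that the contributions at $\realpart t \to -\infty$ vanish. Collecting terms, the algebraic $\frac{1}{x_1\pm ia}$ boundary contributions cancel, and using $\frac{\kappa}{2\sqrt{gb}} = \frac{1}{2b}$ one is left with $G = \frac{1}{2b}\bigl(f(w)+f(-w)\bigr)$, where $f(z) = -e^z E_1(z)$. Taking real parts yields the first asserted identity $\eta_2(x_1) = \frac{1}{4\pi^2 b}\realpart\frac{f(w)+f(-w)}{2}$.

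For the explicit series I would first record the classical identity $-e^z E_1(z) = (\gamma + \log z)e^z - \sum_{k\geq1}\frac{H_k}{k!}z^k$, which holds because both sides solve the linear ODE $u' - u = 1/z$ — using $E_1'(z) = -e^{-z}/z$ on the left and $H_{k+1} = H_k + \tfrac{1}{k+1}$ for the coefficient recursion on the right — and agree to leading order as $z\to 0^+$ (both behave like $\gamma + \log z$). Substituting $w = x + i\alpha$ with $\alpha>0$, I would extract the real part of $(\gamma+\log w)e^w + (\gamma+\log(-w))e^{-w}$ using $\log(-w) = \log w - i\pi$, the identity $\cosh(x+i\alpha) = \cosh x\cos\alpha + i\sinh x\sin\alpha$, and $\arg w = \tfrac{\pi}{2} - \arctan(x/\alpha)$; the three leading terms of $\tilde\eta_2$ then emerge after combining $\sinh x + e^{-x} = \cosh x$ to produce the $-\tfrac{\pi}{2}\sin\alpha\cosh x$ term. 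In the remaining sum the odd powers cancel in $\frac{w^k + (-w)^k}{2}$, leaving $-\sum_{j\geq1}\frac{H_{2j}}{(2j)!}w^{2j}$, whose real part is $-\sum_j \frac{H_{2j}}{(2j)!}(x^2+\alpha^2)^j\cos(2j\arg w)$; since $\cos(\arg w) = x/\sqrt{x^2+\alpha^2}$ and $T_{2j}(\cos\theta)=\cos(2j\theta)$, this is exactly the Chebyshev sum in the statement.

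I expect the main obstacle to be the branch-cut and contour bookkeeping in the middle step — verifying that the horizontal rays stay in $\C\setminus(-\infty,0]$, that the limits at $\realpart t \to -\infty$ vanish, and that the boundary terms combine correctly — together with the mechanical but error-prone real-part extraction in the last step, where the relation $\log(-w) = \log w - i\pi$ must be applied with the sign appropriate to $\alpha > 0$.
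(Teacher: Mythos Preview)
Your argument is correct, and it takes a genuinely different route from the paper. The paper's proof is a pure verification: after scaling so that $\tilde\eta_2$ satisfies $\tilde\eta_2 - \tilde\eta_2'' = \realpart w^{-2}$, it simply checks that $g(z) \coloneqq \tfrac{1}{2}(f(z)+f(-z))$ solves $g - g'' = z^{-2}$ on $\C\setminus\R$, and then appeals to the asymptotic expansion of $E_1$ to see that $g(z) = z^{-2} + O(z^{-4})$ at infinity, which singles it out as the unique decaying solution. You instead \emph{derive} the formula: you write down the Green's function $\tfrac{1}{2\sqrt{gb}}e^{-\kappa|\cdot|}$ for $g - b\partial_{x_1}^2$, compute the convolution against $\realpart (x_1+ia)^{-2}$ explicitly, and recognize the resulting contour integrals as exponential integrals, with the algebraic boundary terms cancelling. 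The paper's approach is shorter and sidesteps the branch-cut bookkeeping you correctly flag as the main obstacle, but it presupposes the answer; yours is constructive and would find the formula from scratch. You also supply justification for the series identity for $f$ and for the explicit real-part formula, neither of which the paper's proof addresses at all.
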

    \begin{proof}
        By using \eqref{c1 and eta2 definition} and the scaling, we see that $\tilde{\eta}_2$ solves the differential equation
        \[
            \tilde{\eta}_2(x) - \tilde{\eta}_2''(x) = \frac{x^2-\alpha^2}{(x^2+\alpha^2)^2} = \realpart{w^{-2}},
        \]
        and one may directly verify that $g(z) \coloneqq (f(z)+f(-z))/2$ satisfies $g(z)-g''(z)=z^{-2}$ in $\C \setminus \R$. Moreover, this is the unique solution that vanishes at infinity, as it can be shown using a well-known asymptotic series for $E_1$
         that $g(z) = 1/z^2 + O(z^{-4})$ as $\abs{z} \to \infty$.
    \end{proof}

\section{Derivatives of the energy and momentum}
    \label{variations appendix} 
    
    We record here the derivatives of $\eng$ and $\mom$ up to order two.  Fix $u = (\eta, \varphi, \bar{x}) \in \Vspace \cap \nbhdO$, and let $\dot u = (\dot \eta, \dot\varphi, \dot{\bar{x}}) \in \Vspace$ represent a variation. Some of the integrals must be understood in the dual-pairing sense. To simplify the notation, we also introduce
    \[
        \mathfrak{a} \coloneqq (\nabla \varphi_{\mathcal{H}})|_S, \qquad \xi \coloneqq (\Theta_{x_1},\Xi_{x_2}) = - \nabla_{\bar{x}} \Theta,
    \]
    and note that
    \[
        D_{\bar{x}}^2 \Theta = \begin{pmatrix}
            \Theta_{x_1 x_1} & \Xi_{x_1 x_2}\\
            \Xi_{x_1 x_2} & \Theta_{x_2 x_2}
        \end{pmatrix}.
    \]

    \subsection*{Variations of $\kinE_0$}
        From \eqref{defK} we compute
        \begin{align*}
            \jb{D_\varphi \kinE_0(u),\dot\varphi} &= \int_{\R} \dot\varphi \DN(\eta) \varphi \, dx_1,\\
            \jb{D_\eta \kinE_0(u),\dot\eta} &= \frac{1}{2} \int_{\R} \varphi \jb{D_\eta \DN(\eta)\dot\eta, \varphi} \, dx_1 = \int_{\R} \dot{\eta}\parn*{\frac{1}{2}\abs{\mathfrak{a}}^2 - \mathfrak{a}_2 \DN(\eta)\varphi}\,dx_1,
        \end{align*}
        with second variations
        \begin{align*}
            \langle D_\varphi^2 \kinE_0(u)\dot\varphi, \dot\varphi \rangle  &= \int_{\R} \dot\varphi \DN(\eta) \dot\varphi \, dx_1, \\
            \langle D_\varphi D_\eta \kinE_0(u) \dot\varphi, \dot\eta \rangle  &= \int_{\R} \dot\varphi \langle D_\eta \DN(\eta)\dot\eta, \varphi\rangle \, dx_1 = \int_{\R} \dot{\eta}(\mathfrak{a}_1 \dot{\varphi}' - \mathfrak{a}_2 \DN(\eta)\dot{\varphi})\,dx_1,\\
            \langle D_\eta^2 \kinE_0(u) \dot\eta, \dot\eta \rangle  &= \frac{1}{2} \int_{\R} \varphi \langle \langle D_\eta^2 \DN(\eta)\dot\eta, \dot\eta \rangle,  \varphi \rangle \, dx_1 = \int_{\R} (\mathfrak{a}_1'\mathfrak{a}_2 \dot{\eta}^2 + \mathfrak{a}_2 \dot{\eta}\DN(\eta)(\mathfrak{a}_2\dot{\eta}))\,dx_1,
        \end{align*}
        where the explicit expressions for the shape-derivatives only hold when $\varphi \in \Xspace_2^{3/2}$.
    
    \subsection*{Variations of $\kinE_1$}
        From \eqref{defK} we find quickly that
        \[ 
            \jb{D_\eta \kinE_1(u),\dot\eta}  = \int_{\R} \dot \eta \varphi' \Theta_{x_1}|_S \, dx_1, \quad
        \jb{D_\varphi \kinE_1(u),\dot\varphi} = \int_{\R} \dot\varphi \nonnorm \Theta  \, dx_1,
        \]
        and
        \[
            \nabla_{\bar{x}} K_1(u) = -\int_{\R} \varphi \nonnorm \xi\,dx_1.
        \]
        The second variations are thus
        \begin{align*}
            \jb{ D_\eta^2 \kinE_1(u) \dot\eta, \dot\eta }  &=  \int_{\R} \dot\eta^2 \varphi'  \Theta_{x_1 x_2}|_S \, dx_1, &\qquad 
            \jb{ D_\eta D_\varphi \kinE_1(u)\dot\eta, \dot\varphi}  &=  \int_{\R} \dot\eta \dot\varphi'  \Theta_{x_1}|_S \, dx_1,\\
            \nabla_{\bar{x}} \jb{D_\eta \kinE_1(u),\dot{\eta}} &= - \int_{\R} \dot{\eta} \varphi' \xi_{x_1}|_S\,dx_1, &\qquad
            \nabla_{\bar{x}} \jb{D_\varphi \kinE_1(u),\dot{\varphi}} &= - \int_{\R} \dot{\varphi} \nonnorm \xi\,dx_1,\\
        \end{align*}
        and
        \[
            D_{\bar{x}}^2 \kinE_1(u) = \int_{\R} \varphi \nonnorm D_{\bar{x}}^2 \Theta\,dx_1.
        \]
        Note that in the above computations we have made repeated use of the fact that $\Theta$ is harmonic in a neighborhood of the surface $S$.  In particular, this implies that
        \begin{equation}
            \label{Theta identities}
            \nonnorm \Theta_{x_1}  = \nontan \Theta_{x_2} = (\Theta_{x_2}|_S)', \qquad  \nonnorm \Theta_{x_2} = -\nontan\Theta_{x_1} = - ( \Theta_{x_1}|_S)'.
        \end{equation}
        Similar identities hold for $\Xi$ as well. 
        
    \subsection*{Variations of $\kinE_2$}
        From \eqref{defK} we find
        \[
            \jb{D_\eta \kinE_2(u),\dot\eta} = \frac{1}{2} \int_{\R}\dot\eta (\abs{\nabla \Theta}^2)|_S  \, dx_1, \qquad \nabla_{\bar{x}} \kinE_2(u) =\nabla\Gamma_2(\bar{x}) - \frac{1}{2}\int_{\R} \nonnorm(\Theta \xi)\,dx_1.
        \]
        The second variations are thus
        \begin{align*}
            \jb{D_\eta^2 \kinE_2(u) \dot{\eta},\dot{\eta}} &= \int_{\R} (\nabla \Theta \cdot \nabla \Theta_{x_2})|_S\dot{\eta}^2\,dx_1,\\
            \nabla_{\bar{x}} \jb{D_\eta \kinE_2(u),\dot\eta} & = - \int_{\R} \dot{\eta}((D_x\xi)\nabla\Theta)|_S\,dx_1,\\
            D_{\bar{x}}^2 \kinE_2(u) &= 2D_x^2 \Gamma_2(\bar{x}) + \frac{1}{2} \int_{\R} \nonnorm(\Theta D_{\bar{x}}^2\Theta + \xi \xi^T)\,dx_1.
        \end{align*}
        
    \subsection*{Variations of $\potE$}
        From \eqref{defV} we have
        \begin{align*}
            \jb{D_\eta \potE(u),\dot{\eta}} &= \int_{\R} \parn*{g\eta - b \parn*{\frac{\eta'}{\jb{\eta'}}}'}\dot{\eta}\,dx_1,\\
            \jb{D_\eta^2 \potE(u)\dot{\eta},\dot{\eta}} &= \int_{\R} \parn*{g\dot{\eta}^2 + b\frac{1}{\jb{\eta'}^3}(\dot{\eta}')^2}\,dx_1.
        \end{align*}
        
    \subsection*{Variations of $\mom$}
        Lastly we consider the momentum.  The first variations are given by
        \[
         \jb{D_\eta \mom(u),\dot \eta}  = \int_{\R} \dot \eta ( \varphi' + \epsilon \Theta_{x_1}|_S) \, dx_1, \qquad 
         \jb{D_\varphi \mom(u),\dot \varphi}    = -\int_{\R} \eta^\prime \dot{\varphi} \, dx_1,\]
        and
        \[
            \nabla_{\bar{x}} P(u) = \epsilon e_2 + \epsilon\int_{\R} \eta' \xi|_S\,dx_1.
        \]
        Likewise, we find that the second variations are
        \begin{align*}
            \jb{ D_\eta^2 \mom(u) \dot{\eta}, \dot{\eta}} &= \epsilon\int_{\R} \dot{\eta}^2  \Theta_{x_1 x_2}|_S \, dx_1, & \qquad
            \jb{D_\eta D_\varphi \mom(u) \dot \eta, \dot \varphi} &= - \int_{\R} \dot{\eta}' \dot\varphi \, dx_1,\\
            \nabla_{\bar{x}} \jb{D_\eta \mom(u),\dot{\eta}} &= - \epsilon \int_{\R} \dot{\eta} \xi_{x_1}|_S\,dx_1, & \qquad
            D_{\bar{x}}^2 \mom(u) &= -\epsilon\int_{\R}\eta' (D_{\bar{x}}^2 \Theta)|_S\,dx_1.
        \end{align*}

\bibliographystyle{siam}
\bibliography{pt_vortex}

\end{document}